\numberwithin{equation}{section}
\newcommand{\tendsto}{\to}
\newcommand{\indic}{\mathbf{1}}
\newcommand{\qed}{\hfill$\Box$}
\renewcommand{\Pr}{\operatorname{P}}
\newcommand{\E}{\operatorname{E}}
\newcommand{\Var}{\operatorname{Var}}
\newcommand{\Cov}{\operatorname{Cov}}
\renewcommand{\d}{\mathrm{d}}
\newcommand{\oP}{o_{\scriptscriptstyle P}}
\newcommand{\OP}{O_{\scriptscriptstyle P}}
\newcommand{\vecR}{\boldsymbol{R}}
\newcommand{\vecU}{\boldsymbol{U}}
\newcommand{\vecV}{\boldsymbol{V}}
\newcommand{\vecX}{\boldsymbol{X}}
\newcommand{\vecm}{\boldsymbol{m}}
\newcommand{\vecu}{\boldsymbol{u}}
\newcommand{\vecv}{\boldsymbol{v}}
\newcommand{\vecw}{\boldsymbol{w}}
\newcommand{\vecx}{\boldsymbol{x}}
\newcommand{\vecy}{\boldsymbol{y}}
\renewcommand{\tilde}{\widetilde}
\renewcommand{\hat}{\widehat}
\renewcommand{\epsilon}{\varepsilon}
\newcommand{\eps}{\varepsilon}
\newcommand{\bbC}{\mathbb{C}}
\newcommand{\bbG}{\mathbb{G}}
\newcommand{\bbN}{\mathbb{N}}
\newcommand{\bbR}{\mathbb{R}}
\newcommand{\bbU}{\mathbb{U}}
\newcommand{\tilvecV}{\tilde{\vecV}}
\title{The Empirical Beta Copula}
\author{Johan Segers\footnote{Institut de Statistique, Biostatistique et Sciences Actuarielles, 
Universit\'{e} catholique de Louvain, Voie du Roman Pays 20, B-1348 Louvain-la-Neuve, Belgium,
E-mail: \texttt{johan.segers@uclouvain.be}}, 
\ Masaaki Sibuya\footnote{Professor Emeritus, Keio University, E-mail: 
\texttt{sibuyam@1986.jukuin.keio.ac.jp}}, 
\ and Hideatsu Tsukahara\footnote{Corresponding author. Faculty of Economics, Seijo University, 6--1--20 Seijo, Setagaya-ku, 
Tokyo, 157-8511, Japan, E-mail: \texttt{tsukahar@seijo.ac.jp}, Tel: +81-3-3482-9264}}
\date{\today}
    {\begin{itemize}%
     }%
    {\end{itemize}}
\newenvironment{proof}[1]{\par\medskip\noindent\textit{#1.}}{\qed\par\medskip}
\theoremstyle{plain}
\newtheorem{Def}{Definition}[section]
\newtheorem{Rem}[Def]{Remark}
\newtheorem{Cond}[Def]{Condition}
\theoremstyle{break}
\theoremstyle{plain}
\newtheorem{Lem}[Def]{Lemma}
\newtheorem{Prop}[Def]{Proposition}
\newtheorem{Cor}[Def]{Corollary}
\newtheorem{Thm}[Def]{Theorem}
\theoremstyle{break}
\definecolor{brown}{rgb}{0.59, 0.29, 0.0}
\begin{document}
\maketitle
\begin{abstract}
Given a sample from a continuous multivariate distribution $F$, the uniform random variates generated independently and rearranged in the order specified by the componentwise ranks of the original sample look like a sample from the copula of $F$. This idea can be regarded as a variant on Baker's [J. Multivariate Anal.\ 99 (2008) 2312--2327] copula construction and leads to the definition of the empirical beta copula.  The latter turns out to be a particular case of the empirical Bernstein copula, the degrees of all Bernstein polynomials being equal to the sample size.  

Necessary and sufficient conditions are given for a Bernstein polynomial to be a copula. These imply that the empirical beta copula is a genuine copula. 
Furthermore, the empirical process based on the empirical Bernstein copula is shown to be asymptotically the same as the ordinary empirical copula process under assumptions which are significantly weaker than those given in Janssen, Swanepoel and Veraverbeke [J. Stat.\ Plan.\ Infer.\ 142 (2012) 1189--1197]. 

A Monte Carlo simulation study shows that the empirical beta copula outperforms the empirical copula and the empirical checkerboard copula in terms of both bias and variance.  Compared with the empirical Bernstein copula with the smoothing rate suggested by Janssen et al., its finite-sample performance is still significantly better in several cases, especially in terms of bias.

\bigskip\noindent
\textsl{AMS 2010 subject classifications}: 62G20, 62G30, 62H12\\
\textsl{Keywords}: Copula, Empirical copula, Bernstein polynomial, Empirical Bernstein copula, Checkerboard copula

\end{abstract}

\section{Introduction}

Let $\vecX_i=(X_{i,1},\ldots,X_{i,d})$, $i\in\{1,\ldots,n\}$, be independent and identically 
distributed random vectors, and assume that the cumulative distribution function, $F$, of $\vecX_i$
is continuous.  By Sklar's theorem (Sklar~\cite{Sklar59}), there exists a unique copula, $C$, such that
\[
  F(x_1,\ldots,x_d)=C\{F_1(x_1),\ldots,F_d(x_d)\},
\]
where $F_j$ is the $j$th marginal distribution function of $F$.
For $i\in\{1,\ldots,n\}$ and $j\in\{1,\ldots,d\}$, let $R_{i,j}^{(n)}$ be the rank of $X_{i,j}$ 
among $X_{1,j},\ldots,X_{n,j}$; namely, 
\begin{equation}
\label{eq:ranks}
  R_{i,j}^{(n)}
  = \sum_{k=1}^n \indic\{ X_{k,j} \le X_{i,j}\}.
\end{equation}
The vector of ranks is denoted by $\vecR_i^{(n)}:=(R_{i,1}^{(n)},\ldots,R_{i,d}^{(n)})$.
The  basic nonparametric estimator for the copula $C$ is 
the empirical copula (Deheuvels~\cite{Deheu79}), and we use the version given by
\begin{equation}
  \label{eq:empirical-copula}
  \bbC_n(\vecu):=\frac{1}{n}\sum_{i=1}^n\prod_{j=1}^d\indic\biggl\{
  \frac{R_{i,j}^{(n)}}{n}\leq u_j\biggr \}, \quad \vecu=(u_1,\ldots,u_d)\in [0,1]^d.
\end{equation}
Slightly different definitions are employed for instance in Fermanian et al.~\cite{Fer-Rad-Weg04} and Tsukahara~\cite{Tsuka05}, but the supremum distance between these empirical copula variants is at most $d/n$.  In Section~\ref{sec:simul}, we will also use the so-called empirical checkerboard copula; see \eqref{eq:emp-checker-copula}.

Let $W_{1,1},\ldots,W_{n,1};\ldots;W_{1,d},\ldots,W_{n,d}$ be independent random variables 
which are uniformly distributed on the unit 
interval and are independent of the sample $\vecX_1, \ldots, \vecX_n$. 
For each $j\in\{1,\ldots,d\}$, let $V_{1,j}^{(n)}<\cdots <V_{n,j}^{(n)}$ be 
the order statistics based on $W_{1,j},\ldots,W_{n,j}$.  Define, for $i\in\{1,\ldots,n\}$, 
\begin{equation}
  \label{eq:V-i}
  \tilvecV_i^{(n)}:=\Bigl(V_{R_{i,1}^{(n)},1}^{(n)},\ldots,V_{R_{i,d}^{(n)},d}^{(n)}\Bigr), 
\end{equation}
One of the authors conceived that $\tilvecV_1^{(n)},\ldots,\tilvecV_n^{(n)}$ could be interpreted
as a sample from some version of the empirical copula.  Although this was not quite correct, 
we found that picking one vector randomly from these $n$ vectors is equivalent to 
sampling from a smoothed version of the empirical copula, which we call 
the \emph{empirical beta copula} in view of the marginal distribution of uniform order statistics.  This idea may be regarded as 
Baker~\cite{Baker2008}'s copula construction based on uniform order statistics,
where $d$-tuples of order statistics are determined by the rank vectors $\vecR_1^{(n)},\ldots,
\vecR_n^{(n)}$, and the probability of choosing among them is equal to $1/n$. 

The empirical beta copula arises as a particular case of the empirical Bernstein 
copula when the degrees of all Bernstein polynomials are set equal to the sample size. 
The Bernstein copula and the empirical Bernstein copula are introduced in Sancetta and 
Satchell~\cite{Sance-Satch2004}, and the asymptotic behavior of the latter is studied 
in Janssen et al.~\cite{Jan-Swan-Ver2012}.
We give necessary and sufficient conditions for the empirical Bernstein copula 
to be a genuine copula, conditions which hold for the empirical beta copula.
We show asymptotic results for the empirical Bernstein copula process under weaker assumptions
than those in Janssen et al.~\cite{Jan-Swan-Ver2012}.

The advantages of the empirical beta copula are that it is a genuine copula, that it does not require the choice of a smoothing parameter, and that simulating random samples from it is straightforward.  Furthermore, the corresponding empirical process converges weakly to a Gaussian process under standard smoothness conditions on the underlying copula. For small samples, the empirical beta copula outperforms the empirical copula both in terms of bias and variance. Compared with the empirical Bernstein copula with polynomial degrees as suggested in Janssen et al.~\cite{Jan-Swan-Ver2012}, the empirical beta copula is still more accurate for several copula models, especially in terms of the bias.

The paper is organized as follows. In Section~\ref{sec:cop}, we define the empirical beta copula 
and prove its relation to the empirical Bernstein copula together with some auxiliary
results for the Bernstein transformation.  In Section~\ref{sec:asym}, we formulate and prove 
the asymptotic results for the empirical Bernstein copula process.  
A Monte Carlo simulation study is presented in Section~\ref{sec:simul}, and we conclude the paper 
with some remarks in Section~\ref{sec:concl}. Some additional proofs are given in the Appendix.

\section{The empirical beta and Bernstein copulas}
\label{sec:cop}

The sampling scheme introduced in the preceding section can be considered as making building blocks for sampling from the empirical beta copula, which will be defined in Section~\ref{subsec:beta}.  It turns out to be a particular case of the empirical Bernstein copula.  Necessary and sufficient conditions are given for the empirical Bernstein copula to be a genuine copula, conditions which imply that the empirical beta copula is a genuine copula. Finally, we provide a non-asymptotic bound for the difference between the empirical copula and the empirical beta copula.

\subsection{Empirical beta copula}
\label{subsec:beta}

We continue to use the notation given in the introduction.  To express mathematically
the idea stated above, we replace each indicator function in the definition \eqref{eq:empirical-copula}
of the empirical copula by the cumulative distribution function of the $j$th component,  
$V_{R_{i,j}^{(n)},j}^{(n)}$, of $\tilvecV_i^{(n)}$ in \eqref{eq:V-i} conditionally on $R_{i,j}^{(n)}=r$.  
Since $V_{r,j}^{(n)}$ is the $r$th order statistic of an independent random sample of size $n$
from the uniform distribution on $[0, 1]$, its distribution is a beta distribution 
$\mathcal{B}(r,n+1-r)$.  Thus we define the \emph{empirical beta copula} by
\begin{equation}
\label{eq:betacopula}
  \bbC_n^\beta(\vecu)=\frac{1}{n}\sum_{i=1}^n\prod_{j=1}^d F_{n,R_{i,j}^{(n)}}(u_j), \quad
  \vecu=(u_1,\ldots, u_d)\in [0,1]^d,
\end{equation}
where, for $u\in [0,1]$ and $r \in \{1, \ldots, n\}$,
\begin{equation}
\label{eq:betacdf}
  F_{n,r}(u) 
  = \Pr(U_{r:n}\leq u)
  = \sum_{s=r}^n \binom{n}{s} u^s (1-u)^{n-s}
\end{equation}
is the cumulative distribution function of $\mathcal{B}(r,n+1-r)$. 
Here and henceforth, $U_{1:n}<\ldots < U_{n:n}$ generically denote the order statistics 
based on $n$ independent random variables $U_1,\ldots,U_n$, uniformly distributed on $[0, 1]$.  
Since $V_{R_{i,j}^{(n)},j}^{(n)}$ has a $\mathcal{B}(r_{i,j},n+1-r_{i,j})$ distribution for 
$j\in\{1,\ldots,d\}$ and $V_{R_{i,1}^{(n)},1}^{(n)},\ldots,V_{R_{i,d}^{(n)},d}^{(n)}$ are independent, 
both conditionally on $R_{i,j}^{(n)}=r_{i,j}, \; j=1,\ldots,d$, one sees that 
picking one element randomly from among the vectors $\tilvecV_1^{(n)},\ldots,\tilvecV_n^{(n)}$ 
in \eqref{eq:V-i} amounts to sampling from the empirical beta copula $\bbC_n^\beta$ 
conditionally on $\vecX_1,\ldots,\vecX_n$.

In the absence of ties, all $d$ margins of $\bbC_n^\beta$ are equal to the uniform distribution on $[0, 1]$, so that $\bbC_n^\beta$ is a genuine copula. Indeed, for $j \in \{1, \ldots, d\}$ and $u_j \in [0, 1]$,
\begin{align*}
  \bbC_n^\beta(1,\ldots,1,u_j,1,\ldots,1)&=\frac{1}{n}\sum_{i=1}^n F_{n,R_{i,j}^{(n)}}(u_j)
  =\frac{1}{n}\sum_{r=1}^n F_{n,r}(u_j)\\
  &=\frac{1}{n}\sum_{r=1}^n \E\bigl(\indic\{U_{r:n}\leq u_j\}\bigr)
  =\E\biggl(\frac{1}{n}\sum_{r=1}^n \indic\{U_r\leq u_j\}\biggr)=u_j.
\end{align*}
In case of ties, the ranks defined in \eqref{eq:ranks} are no longer a permutation of 
$\{1, \ldots, n\}$ and the above argument breaks down.  An easy way to get around this issue
is by breaking the ties at random.

\subsection{Preliminaries on Bernstein polynomials}

Before we show that the empirical beta copula is a particular case of the empirical Bernstein copula, we need to state and prove some auxiliary results on Bernstein polynomials in general.  Put 
\[
  p_{m,s}(u) = \binom{m}{s}u^s(1-u)^{m-s},
  \qquad u\in [0,1],\ m\in\bbN,\ s\in\{0,1,\ldots,m\}.
\]
For $\vecm=(m_1,\ldots,m_d)\in\bbN^d$ and a real array $a= (a_{s_1,\ldots,s_d}\in\bbR\colon s_j=0,\ldots,m_j, \ j=1,\ldots,d)$, consider the following Bernstein polynomial:
\[
  B_{\vecm}(a)(\vecu):=\sum_{s_1=0}^{m_1}\cdots\sum_{s_d=0}^{m_d} a_{s_1,\ldots,s_d}
  \prod_{j=1}^d p_{m_j,s_j}(u_j), \quad\vecu=(u_1,\ldots,u_d)\in [0,1]^d. 
\]
Our objective in this subsection is to derive a (necessary and) sufficient condition
on the coefficients $a$ for $B_{\vecm}(a)$ to be a copula.
To this end, we need the following two lemmas.

\begin{Lem}
\label{prop:prelim}
Let $n \in \bbN$ and $a = (a_0, \ldots, a_n) \in \bbR^{n+1}$.
\begin{enumerate}
\item We have $\sum_{r=0}^na_rp_{n,r}(t) = 0$ for all $t\in [0,1]$ if and only if
$a_r=0$ for all $r \in \{0,1,\ldots,n\}$.
\item We have $\sum_{r=0}^na_rp_{n,r}(t)=t$ for all $t\in [0,1]$ if and only if 
$a_r=r/n$ for all $r \in \{0,1,\ldots,n\}$.
\end{enumerate}
\end{Lem}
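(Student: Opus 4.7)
The plan is to prove part (i) first and deduce part (ii) from it.

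For part (i), the ``if'' direction is trivial, so the content lies in showing that the Bernstein basis polynomials $p_{n,0}, \ldots, p_{n,n}$ are linearly independent as functions on $[0,1]$. My preferred route is the substitution $t = x/(1+x)$, which maps $[0, \infty)$ bijectively onto $[0, 1)$ and gives $1-t = 1/(1+x)$, hence $p_{n,r}(t) = \binom{n}{r} x^r / (1+x)^n$. Multiplying the hypothesis $\sum_r a_r p_{n,r}(t) = 0$ by $(1+x)^n$ turns it into the polynomial identity $\sum_{r=0}^n a_r \binom{n}{r} x^r = 0$ on $[0, \infty)$, and linear independence of the monomials then forces $a_r = 0$ for every $r \in \{0, 1, \ldots, n\}$. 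An alternative phrasing is that $\{p_{n,r}\}_{r=0}^n$ is a set of $n+1$ polynomials spanning the $(n+1)$-dimensional space of polynomials of degree at most $n$, but the substitution delivers the conclusion without invoking a dimension count.

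For part (ii), the ``if'' direction amounts to verifying the classical identity $\sum_{r=0}^n (r/n)\, p_{n,r}(t) = t$, which follows from $r \binom{n}{r} = n \binom{n-1}{r-1}$ and a re-indexing of the sum, or equivalently from the probabilistic fact that the mean of a Binomial$(n,t)$ random variable equals $nt$. The ``only if'' direction is then immediate from part (i): if another coefficient vector $a = (a_0, \ldots, a_n)$ also satisfies $\sum_r a_r p_{n,r}(t) = t$, then $\sum_r (a_r - r/n)\, p_{n,r}(t) = 0$ on $[0,1]$, so $a_r = r/n$ for all $r$ by part (i).

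There is no serious obstacle in this lemma; the only choice is how to establish linear independence of the Bernstein basis cleanly, and the change of variables $t = x/(1+x)$ is the most elementary device, converting $\{p_{n,r}\}$ into a rescaling of the monomial basis on $[0,\infty)$ and making the conclusion transparent.
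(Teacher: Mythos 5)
Your proof is correct, but it differs from the paper's in both parts. For part (i), the paper simply cites the fact that $p_{n,0},\ldots,p_{n,n}$ form a basis of the space of polynomials of degree at most $n$, whereas you give a self-contained verification of linear independence via the substitution $t = x/(1+x)$, which converts $\sum_r a_r p_{n,r}(t)=0$ into the monomial identity $\sum_r a_r \binom{n}{r} x^r = 0$ on $[0,\infty)$; this is a nice elementary device that avoids appealing to the dimension count, though it proves the same underlying fact. The more substantive divergence is in part (ii): the paper proves the ``only if'' direction by expanding $\sum_r a_r p_{n,r}(t)$ in the monomial basis via the identity $\sum_{r=0}^n a_r p_{n,r}(t)=\sum_{k=0}^n t^k\binom{n}{k}\sum_{r=0}^k a_r\binom{k}{r}(-1)^{k-r}$ and then running an induction on $k$ to extract the coefficients, while you instead subtract the known representation $t=\sum_r (r/n)p_{n,r}(t)$ and invoke part (i) on the difference. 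Your linearity reduction is shorter and arguably cleaner, since it reuses the uniqueness statement already established rather than redoing a coefficient extraction; the paper's identity has the minor side benefit of exhibiting the monomial coefficients explicitly, but nothing in the rest of the paper depends on that. Both arguments are complete and correct.
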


\begin{proof}{Proof}
The `if' parts follow from direct computation. The `only if' part in (i) is an immediate consequence of the fact that the Bernstein polynomials $p_{n,0}, \ldots, p_{n,n}$ form a basis of the linear space of all real polynomials with degree not larger than $n$.  For the `only if' part in (ii), 
use the identity 
\begin{equation*}
  \sum_{r=0}^na_rp_{n,\,r}(t)=\sum_{k=0}^nt^k\binom{n}{k}\cdot
  \sum_{r=0}^ka_r\binom{k}{r}(-1)^{k-r}, 
\end{equation*}
and induction on $k$.  
\end{proof}

\begin{Lem}
\label{prop:bernstein-0}
$B_{\vecm}(a)(\vecu)=0$ for all $\vecu\in [0,1]^d$ if and only if $a \equiv 0$.
\end{Lem}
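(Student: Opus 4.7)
The plan is to prove the nontrivial ``only if'' direction by induction on the dimension $d$, using Lemma~\ref{prop:prelim}(i) as the base case and as the engine of the inductive step.

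For $d = 1$ the statement is exactly Lemma~\ref{prop:prelim}(i). For the inductive step, assume the result holds in dimension $d-1$ and suppose $B_{\vecm}(a)(\vecu)=0$ for every $\vecu\in[0,1]^d$. First I would separate out the variable $u_1$ by writing
\[
  B_{\vecm}(a)(\vecu)
  = \sum_{s_1=0}^{m_1} p_{m_1,s_1}(u_1)\, b_{s_1}(u_2,\ldots,u_d),
\]
where
\[
  b_{s_1}(u_2,\ldots,u_d) := \sum_{s_2=0}^{m_2}\cdots\sum_{s_d=0}^{m_d}
  a_{s_1,s_2,\ldots,s_d}\prod_{j=2}^d p_{m_j,s_j}(u_j).
\]
For each fixed $(u_2,\ldots,u_d)\in[0,1]^{d-1}$, the right-hand side is a univariate Bernstein polynomial of degree $m_1$ in $u_1$ which vanishes identically, so Lemma~\ref{prop:prelim}(i) gives $b_{s_1}(u_2,\ldots,u_d)=0$ for every $s_1\in\{0,1,\ldots,m_1\}$.

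Since the equality $b_{s_1}(u_2,\ldots,u_d)=0$ holds for all $(u_2,\ldots,u_d)\in[0,1]^{d-1}$, the inductive hypothesis applied to the $(d-1)$-dimensional Bernstein polynomial $b_{s_1}$ yields $a_{s_1,s_2,\ldots,s_d}=0$ for all $s_2,\ldots,s_d$. Ranging over $s_1$ shows that $a\equiv 0$, which completes the induction.

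There is no real obstacle here: everything reduces to the univariate uniqueness of Bernstein coefficients already established in Lemma~\ref{prop:prelim}(i). The only point that requires a little care is the bookkeeping of indices when stripping off one variable at a time, and the observation that the bracketed expression $b_{s_1}$ is itself a Bernstein polynomial of multi-degree $(m_2,\ldots,m_d)$ with coefficient array $(a_{s_1,s_2,\ldots,s_d}\colon 0\le s_j\le m_j,\ j=2,\ldots,d)$, so that the inductive hypothesis applies directly.
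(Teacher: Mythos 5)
Your argument is correct and is exactly the proof the paper has in mind: the paper states that the ``only if'' direction follows by induction on $d$, using Lemma~\ref{prop:prelim}(i) both for the base case and for the induction step, and leaves the details to the reader; your write-up supplies precisely those details. Nothing further is needed.
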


\begin{proof}{Proof}
The `if' part is trivial. The `only if' part can be proven by induction on $d$, using Lemma~\ref{prop:prelim}(i) both for the case $d = 1$ as for the induction step. We leave the details to the reader.
\end{proof}

For $j \in \{1, \ldots, d\}$, define the difference operator $\Delta_j$ mapping a given array $a$ to the new array $\Delta_ja$ given by
\begin{equation*}
  \Delta_ja_{s_1,\ldots,s_d} = a_{s_1,\ldots,s_j,\ldots,s_d}-a_{s_1,\ldots,s_j-1,\ldots,s_d},
  \qquad
  \left\{
    \begin{array}{l}
      s_j \in \{1, \ldots, m_j\}, \\
      s_k \in \{0, 1, \ldots, m_k\} \text{\ if\ } k \neq j.
    \end{array}
  \right.
\end{equation*}
Difference operators $\Delta_j$ with distinct indices $j$ can be composed in the obvious way. In particular,
\[
  \Delta_1\cdots\Delta_d a_{s_1,\ldots,s_d}
  = \sum_{t \in \{0, 1\}^d} (-1)^{\#\{j : t_j = 1\}} a_{s_1 - t_1,\ldots, s_d - t_d},
\]
where $s_j \in \{1, \ldots, m_j\}$ for each $j \in \{1, \ldots, d\}$.

Furthermore, consider the following three conditions on a real array $a$:
\begin{enumerate}[({C}.1)]
\setlength{\itemsep}{0pt}
\item[(C.1)] $a_{s_1,\ldots,s_d}=0$ as soon as $s_j=0$ for some $j \in \{1, \ldots, d\}$;
\item[(C.2)] $a_{m_1,\ldots,m_{j-1},s_j,m_{j+1},\ldots,m_d}=s_j/m_j$ for each $j \in \{1, \ldots, d\}$ and each $s_j \in \{0,1,\ldots,m_j\}$;
\item[(C.3)] $\Delta_1\cdots\Delta_d a_{s_1,\ldots,s_d}\geq 0$ for all $(s_1,\ldots,s_d) \in \prod_{j=1}^d \{1, \ldots, m_j\}$.
\end{enumerate}

\begin{Prop}
\label{prop:bernstein-polynomial-copula}
If the conditions (C.1), (C.2) and (C.3) hold, then the Bernstein polynomial $B_{\vecm}(a)$
is a copula.  Moreover, (C.1) and (C.2) are necessary for $B_{\vecm}(a)$ to be a copula.
\end{Prop}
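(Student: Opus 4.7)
The plan is to verify the three defining properties of a $d$-dimensional copula for $B_{\vecm}(a)$: groundedness ($B_{\vecm}(a)(\vecu) = 0$ whenever some $u_j = 0$), uniform margins ($B_{\vecm}(a)(1,\ldots,u_j,\ldots,1) = u_j$), and $d$-increasingness (non-negative rectangle volumes). Since $B_{\vecm}(a)$ is a polynomial and therefore smooth, $d$-increasingness is equivalent to the mixed partial derivative $\partial^d B_{\vecm}(a)/\partial u_1 \cdots \partial u_d$ being non-negative on $[0,1]^d$.

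For the sufficiency, fix $j$ and set $u_j = 0$; using $p_{m_j,s_j}(0) = \indic\{s_j = 0\}$ reduces $B_{\vecm}(a)(\vecu)$ to a sum over tuples with $s_j = 0$, and these coefficients all vanish by (C.1), yielding groundedness. For the uniform margins, setting $u_k = 1$ for $k \neq j$ and using $p_{m_k,s_k}(1) = \indic\{s_k = m_k\}$ leaves $\sum_{s_j = 0}^{m_j} a_{m_1,\ldots,s_j,\ldots,m_d}\, p_{m_j,s_j}(u_j)$, which equals $\sum_{s_j=0}^{m_j} (s_j/m_j)\, p_{m_j,s_j}(u_j) = u_j$ by (C.2) and Lemma~\ref{prop:prelim}(ii). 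For $d$-increasingness, the classical identity $p'_{m,s}(u) = m[p_{m-1,s-1}(u) - p_{m-1,s}(u)]$ (with the convention $p_{m-1,-1} = p_{m-1,m} = 0$) combined with a reindexing of each summation (shifting $s_j \mapsto s_j - 1$ in the second summand) gives, by induction on the number of derivatives taken, the formula
\[
  \frac{\partial^d B_{\vecm}(a)}{\partial u_1 \cdots \partial u_d}(\vecu)
  = \Bigl(\prod_{j=1}^d m_j\Bigr) \sum_{s_1 = 1}^{m_1} \cdots \sum_{s_d = 1}^{m_d}
    \Delta_1 \cdots \Delta_d a_{s_1,\ldots,s_d} \prod_{j=1}^d p_{m_j - 1, s_j - 1}(u_j),
\]
which is non-negative by (C.3) and the non-negativity of the Bernstein basis.

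For the necessity, suppose $B_{\vecm}(a)$ is a copula. Substituting $u_j = 0$ makes $B_{\vecm}(a)(\vecu)$ a Bernstein polynomial in the remaining $d - 1$ variables with coefficient array $(a_{s_1,\ldots,0,\ldots,s_d})$, and by groundedness this polynomial vanishes identically; Lemma~\ref{prop:bernstein-0} applied in dimension $d - 1$ forces (C.1). Similarly, substituting $u_k = 1$ for $k \neq j$ reduces $B_{\vecm}(a)(1,\ldots,u_j,\ldots,1)$ to the univariate Bernstein polynomial $\sum_{s_j=0}^{m_j} a_{m_1,\ldots,s_j,\ldots,m_d}\, p_{m_j,s_j}(u_j)$, which must equal $u_j$ because the $j$th margin of a copula is uniform; Lemma~\ref{prop:prelim}(ii) then identifies the coefficients as $s_j/m_j$, giving (C.2).

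The main technical step is establishing the mixed-partial identity in the sufficiency direction; once it is in place, (C.3) translates directly into $d$-increasingness, while the boundary conditions (C.1) and (C.2) dispose of groundedness and the uniform margins via Lemmas~\ref{prop:prelim} and \ref{prop:bernstein-0}.
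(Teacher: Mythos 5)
Your proof is correct and follows essentially the same route as the paper: groundedness and uniform margins are reduced to Lemmas~\ref{prop:bernstein-0} and~\ref{prop:prelim}(ii) via the boundary values of the Bernstein basis (which also gives the necessity of (C.1) and (C.2)), and $d$-increasingness is obtained from the same mixed-partial identity, your derivative recursion $p'_{m,s}=m(p_{m-1,s-1}-p_{m-1,s})$ being just the pre--summation-by-parts form of the paper's identity~\eqref{eq:bin-derivative}. The only implicit ingredient is the equivalence of $d$-increasingness with non-negativity of $\partial_{1,\ldots,d}B_{\vecm}(a)$ for smooth functions, which the paper isolates as Proposition~\ref{prop:d-increasing-partial-derivative} in the appendix.
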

Before commencing the proof, recall that a function $C\colon [0,1]^d\tendsto [0,1]$ is a copula if and only if the following conditions hold:
\begin{enumerate}[(i)]
\setlength{\itemsep}{0pt}
\item $C$ is \textit{grounded}; that is, $C(\vecu)=0$ whenever at least one of the $u_j$ equals $0$.
\item $C(1,\ldots,1,u_j,1,\ldots,1)=u_j$ for all $j\in\{1,\ldots,d\}$.
\item $C$ is \textit{$d$-increasing}; that is, for all $\vecu, \vecv\in [0,1]^d$ with
$u_j<v_j$ for all $j\in\{1,\ldots,d\}$, we have 
\[
  \sum_{\vecw} (-1)^{\#\{j\colon w_j=u_j\}}C(\vecw)\geq 0,
\]
where the sum is taken over all $\vecw$ such that $w_j \in \{u_j,v_j\}$ for all $j\in\{1,\ldots,d\}$.
\end{enumerate}
See Nelsen~\cite{Nelsen2006} or more explicitly, Theorem 1.1 of Mai and 
Scherer~\cite{Mai-Scherer2012}.  

\begin{proof}{Proof of Proposition~\ref{prop:bernstein-polynomial-copula}}
Since $p_{n,r}(0)=\indic\{r=0\}$, we have
\begin{multline*}
  B_{\vecm}(a)(u_1,\ldots,u_{j-1},0,u_{j+1},\ldots,u_d)\\
  =\sum_{s_1=0}^{m_1}\cdots\sum_{s_{j-1}=0}^{m_{j-1}}\sum_{s_{j+1}=0}^{m_{j+1}}\cdots\sum_{s_d=0}^{m_d} 
   a_{s_1,\ldots,s_{j-1},0,s_{j+1},\ldots,s_d}\prod_{k\not= j} p_{m_k,\,s_k}(u_k) 
\end{multline*}
It then follows from Lemma~\ref{prop:bernstein-0} that (C.1) is equivalent to the groundedness of $B_{\vecm}(a)$.

Since $p_{n,r}(1)=\indic\{r=n\}$, we have 
\[
  B_{\vecm}(a)(1,\ldots,1,u_{j},1,\ldots,1)
  =\sum_{s_j=0}^{m_j}a_{m_1,\ldots,m_{j-1},s_j,m_{j+1},\ldots,m_d}p_{m_j,s_j}(u_j).
\]
By Lemma~\ref{prop:prelim}(ii), condition (C.2) is equivalent to the marginal condition (ii) above for $B_{\vecm}(a)$.

Using the identity \eqref{eq:bin-derivative} and by induction, one can easily find that
\begin{align*}
  \partial_{1,\ldots, d}B_{\vecm}(a)(\vecu)
  =\sum_{s_1=1}^{m_1}\cdots\sum_{s_d=1}^{m_d}\Delta_1\cdots\Delta_d a_{s_1,\ldots,s_d}
   \prod_{j=1}^d m_j p_{m_j-1,\,s_j-1}(u_j).
\end{align*}
If (C.3) is satisfied, then it is obvious from the above expression that we have
$\partial_{1,\ldots, d}B_{\vecm}(a)(\vecu)\geq 0$ for all $\vecu\in (0,1)^d$.
Since $B_{\vecm}(a)$ is infinitely differentiable, it thus follows from 
Proposition~\ref{prop:d-increasing-partial-derivative} that $B_{\vecm}(a)$ is $d$-increasing.
Therefore (C.1), (C.2) and (C.3) imply that $B_{\vecm}(a)$ is a copula.

The necessity of (C.1) and (C.2) for $B_{\vecm}(a)$ to be a copula has already been proved.
\end{proof}

\begin{Rem}
The condition (C.3) is not necessary for $B_{\vecm}(a)$ to be a copula. 
Take $m_1=3$ and $m_2=2$, and let $a_{i,0}=a_{0,j}=0$ for $i\in\{0,1,2,3\}$, $j\in\{0,1,2\}$,
and $a_{11}=1/8$, $a_{12}=1/3$, $a_{21}=1/2$, $a_{22}=2/3$, $a_{31}=1/2$, $a_{32}=1$.
Then $(a_{ij}\colon i\in\{0,1,2,3\};\ j\in\{0,1,2\})$ satisfies (C.1) and (C.2), but not
(C.3) since $\Delta_1\Delta_2a_{22}=-1/24<0$.  By direct inspection, it is easy 
to see that $\partial_{1,2}B_{\vecm}(a)(\vecu)>0$ for all $\vecu\in [0,1]^2$
because $\partial_{1,2}B_{3,2}(a)$ is linear in $u_2$.
\end{Rem}

\subsection{Bernstein copulas}

For a function $f\colon [0,1]^d\tendsto\bbR$, the Bernstein polynomial of order $\vecm:=(m_1,\ldots,m_d) \in \bbN^d$ of $f$ is defined by
\[
  B_{\vecm}(f)(\vecu):=\sum_{s_1=0}^{m_1}\cdots\sum_{s_d=0}^{m_d} f(s_1/m_1,\ldots,s_d/m_d)
  \prod_{j=1}^d p_{m_j,s_j}(u_j). 
\]
When $C$ is a copula, $B_{\vecm}(C)$ is called the \textit{Bernstein copula} of $C$. 
For the empirical copula $\bbC_n$, we call $B_{\vecm}(\bbC_n)$ the \textit{empirical 
Bernstein copula}.

For a copula $C$, the real array $a$ defined by $a_{s_1,\ldots,s_d}=C(s_1/m_1,\ldots,s_d/m_d)$ satisfies conditions (C.1)--(C.3). By Proposition~\ref{prop:bernstein-polynomial-copula}, the Bernstein copula $B_{\vecm}(C)$ of $C$ is therefore a copula itself. On the other hand, the empirical Bernstein copula $B_{\vecm}(\bbC_n)$ is not necessarily a copula.

\begin{Prop}
\label{prop:Bernstein-genuine-copula}
Let $\bbC_n$ be the empirical copula of a sample $\vecX_1, \ldots, \vecX_n$ of $d$-variate vectors without ties in any of the $d$ components. Let $\vecm \in \bbN^d$. The empirical Bernstein copula $B_{\vecm}(\bbC_n)$ is a copula if and only if all the polynomial degrees $m_1, \ldots, m_d$ are divisors of $n$.
\end{Prop}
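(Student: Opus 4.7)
The plan is to apply Proposition~\ref{prop:bernstein-polynomial-copula} to the array $a_{s_1,\ldots,s_d} = \bbC_n(s_1/m_1,\ldots,s_d/m_d)$, which by definition gives $B_{\vecm}(\bbC_n) = B_{\vecm}(a)$. So the question reduces to checking exactly when conditions (C.1), (C.2), (C.3) hold for this particular array. Since the proposition tells us that (C.1) and (C.2) are necessary and that together with (C.3) they are sufficient, all the content will be in pinpointing which of the three can fail.

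First I would observe that (C.1) is automatic: if some $s_j = 0$, then $\bbC_n$ evaluated at a point with a zero coordinate is $0$ by groundedness of any empirical copula. Next I would handle (C.3), also automatically. Since $\bbC_n$ is itself the distribution function of the probability measure $\frac{1}{n}\sum_{i=1}^n \delta_{\vecR_i^{(n)}/n}$ on $[0,1]^d$, the quantity
\[
  \Delta_1\cdots\Delta_d a_{s_1,\ldots,s_d}
  = \sum_{t\in\{0,1\}^d}(-1)^{\#\{j\colon t_j=1\}}\bbC_n\!\left(\tfrac{s_1-t_1}{m_1},\ldots,\tfrac{s_d-t_d}{m_d}\right)
\]
is precisely the $\bbC_n$-measure of the box $\prod_j [(s_j-1)/m_j,\, s_j/m_j]$, hence non-negative. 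So (C.3) holds irrespective of $\vecm$.

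All the action is therefore in (C.2). Because there are no ties in any component, the ranks $R_{1,j}^{(n)},\ldots,R_{n,j}^{(n)}$ are a permutation of $\{1,\ldots,n\}$, so the $j$th margin of the empirical copula is the step function
\[
  \bbC_n(1,\ldots,1,u_j,1,\ldots,1) = \frac{1}{n}\sum_{r=1}^n \indic\{r/n \le u_j\} = \frac{\lfloor n u_j\rfloor}{n},
  \qquad u_j \in [0,1].
\]
Condition (C.2) then becomes $\lfloor n s_j/m_j\rfloor/n = s_j/m_j$ for every $s_j \in \{0,1,\ldots,m_j\}$, i.e.\ $n s_j/m_j \in \bbZ$ for every such $s_j$. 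Taking $s_j = 1$ shows this forces $m_j \mid n$, and conversely, once $m_j \mid n$ the integrality is obvious for every $s_j$.

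The two directions then drop out immediately: sufficiency from Proposition~\ref{prop:bernstein-polynomial-copula} together with the automatic (C.1) and (C.3) and the just-established (C.2) under $m_j \mid n$; necessity from the fact that (C.2) is necessary for $B_{\vecm}(a)$ to be a copula. I do not anticipate a real obstacle: the only delicate point is the elementary number-theoretic observation that $\{n s_j/m_j : s_j = 0,\ldots, m_j\} \subset \bbZ$ iff $m_j \mid n$, which is where the no-ties assumption on the sample (ensuring the marginals of $\bbC_n$ have exactly the jump structure above) is essential.
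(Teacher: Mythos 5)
Your argument is correct and follows essentially the same route as the paper's proof: reduce to checking (C.1)--(C.3) for the array $a_{s_1,\ldots,s_d}=\bbC_n(s_1/m_1,\ldots,s_d/m_d)$, note that (C.1) and (C.3) hold automatically because $\bbC_n$ is a grounded distribution function, and show that (C.2) amounts to $\lfloor n s_j/m_j\rfloor = n s_j/m_j$ for all $s_j$, which holds precisely when $m_j$ divides $n$. Your explicit identification of $\Delta_1\cdots\Delta_d a$ with the $\bbC_n$-measure of a box is a slightly more detailed justification of (C.3) than the paper gives, but the substance is identical.
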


\begin{proof}{Proof} 
The empirical copula $\bbC_n$ in \eqref{eq:empirical-copula} is a cumulative distribution function on $[0, 1]^d$ without mass on the lower boundary of the unit hypercube. The real array $a$ defined by $a_{s_1,\ldots,s_d}=\bbC_n(s_1/m_1,\ldots,s_d/m_d)$ therefore satisfies conditions~(C.1) and~(C.3) above. By Proposition~\ref{prop:bernstein-polynomial-copula}, $B_{\vecm}(\bbC_n)$ is a copula if and only if that array $a$ also satisfies (C.2).

Let $\lfloor x \rfloor$ denote the integer part of the real number $x$. For $j \in \{1,\ldots,d\}$ and $s_j \in \{1, \ldots, m_j\}$, we have 
\begin{align*}
  \bbC_n(1,\ldots,1,s_j/m_j,1,\ldots,1)
  &= \frac{1}{n}\sum_{i=1}^n \indic\biggl\{\frac{R_{i,j}^{(n)}}{n}\leq\frac{s_j}{m_j}\biggr\} \\
  &= \frac{1}{n} \#\biggl\{i = 1, \ldots, n\colon i\leq\frac{s_jn}{m_j}\biggr\}
  = \frac{1}{n} \left\lfloor \frac{s_j n}{m_j} \right\rfloor.
\end{align*}
Condition (C.2) is fulfilled if and only if the right-hand side is equal to $s_j/m_j$ for all $s_j \in \{1, \ldots, m_j\}$. Setting $s_j = 1$ shows that this requires $m_j$ to be a divisor of $n$, and the latter condition is easily seen to be sufficient as well.
%
\end{proof}

Now we show that the empirical beta copula $\bbC_n^\beta$ in \eqref{eq:betacopula} is a particular case of the empirical Bernstein copula. 

\begin{Lem}
\label{lem:betaBernstein}
Let $\bbC_n$ be the empirical copula of a sample $\vecX_1, \ldots, \vecX_n$ of $d$-variate vectors without ties in any of the $d$ components. Then $B_{(n, \ldots, n)}( \bbC_n ) = \bbC_n^\beta$.
\end{Lem}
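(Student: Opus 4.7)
The plan is to expand $B_{(n,\ldots,n)}(\bbC_n)(\vecu)$ according to the definition of the Bernstein polynomial, substitute the explicit form of the empirical copula $\bbC_n$ at the grid points $(s_1/n,\ldots,s_d/n)$, interchange the order of summation, and recognize the inner sums as the beta cdf $F_{n,r}$.

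Concretely, first I would write
\[
  \bbC_n(s_1/n,\ldots,s_d/n)
  = \frac{1}{n}\sum_{i=1}^n \prod_{j=1}^d \indic\{R_{i,j}^{(n)} \leq s_j\},
\]
using that $R_{i,j}^{(n)}/n \leq s_j/n$ iff $R_{i,j}^{(n)} \leq s_j$ (since both sides are integers divided by $n$). Plugging this into the definition of $B_{(n,\ldots,n)}(\bbC_n)(\vecu)$, pulling the sum over $i$ outside, and distributing the product $\prod_j p_{n,s_j}(u_j)$ against the product of indicators, I get
\[
  B_{(n,\ldots,n)}(\bbC_n)(\vecu)
  = \frac{1}{n}\sum_{i=1}^n \prod_{j=1}^d
    \sum_{s_j=0}^n \indic\{R_{i,j}^{(n)} \leq s_j\}\, p_{n,s_j}(u_j).
\]
The factorization over $j$ works because the summand in the Bernstein expansion factors into a product indexed by $j$ once $\bbC_n$ has been expanded.

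Next, for each $j$, the inner sum collapses to
\[
  \sum_{s_j=R_{i,j}^{(n)}}^n p_{n,s_j}(u_j) = F_{n,R_{i,j}^{(n)}}(u_j)
\]
by the definition of $F_{n,r}$ in \eqref{eq:betacdf}. Substituting back yields exactly the defining expression \eqref{eq:betacopula} of $\bbC_n^\beta(\vecu)$.

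There is no real obstacle here; the only thing worth flagging is the innocuous step that $\bbC_n(s_j/n)$ along the $j$-th margin equals $(\#\{i\colon R_{i,j}^{(n)}\leq s_j\})/n$, which in the no-ties case is just $s_j/n$ (consistent with $n$ being a divisor of itself, as required by Proposition~\ref{prop:Bernstein-genuine-copula}), and that the factorization of the product of indicators is valid because $\indic\{R_{i,j}^{(n)}\leq s_j\}$ depends only on the coordinate $s_j$.
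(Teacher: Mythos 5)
Your proof is correct and is essentially the paper's own argument read in the reverse direction: the paper starts from $\bbC_n^\beta$, expands $F_{n,r}(u)=\sum_{s=r}^n p_{n,s}(u)$, and rearranges to obtain $B_{(n,\ldots,n)}(\bbC_n)$, whereas you start from the Bernstein side and collapse the inner sums to $F_{n,R_{i,j}^{(n)}}$. The same interchange of finite sums and the same factorization over $j$ are the only ingredients in both versions.
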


\begin{proof}{Proof}
The $\mathcal{B}(r,n+1-r)$ cumulative distribution function $F_{n,r}$ in \eqref{eq:betacdf} can be written in terms of Bernstein polynomials $p_{n,s}$ as
\[
  F_{n,r}(u)=\sum_{s=r}^np_{n,s}(u)=\sum_{s=0}^n\indic\{r\leq s\} \, p_{n,s}(u), \qquad u \in [0, 1].
\]
As a consequence, for $\vecu \in [0, 1]^d$,
\begin{align*}
  \bbC_n^\beta(\vecu)
  &= 
  \frac{1}{n} \sum_{i=1}^n\prod_{j=1}^d\sum_{s_j=0}^n   \indic\{R_{i,j}^{(n)}\leq s_j\} \, p_{n,s_j}(u_j) \\
  &=\frac{1}{n} \sum_{i=1}^n\sum_{s_1=0}^n\cdots\sum_{s_d=0}^n \prod_{j=1}^d\indic\{R_{i,j}^{(n)}\leq s_j\} \, p_{n,s_j}(u_j) \\
  &=\sum_{s_1=0}^n\cdots\sum_{s_d=0}^n \biggl(\frac{1}{n}\sum_{i=1}^n \prod_{j=1}^d\indic\{R_{i,j}^{(n)}\leq s_j\}\biggr)\prod_{j=1}^d p_{n,s_j}(u_j).
\end{align*}
Invoking the definition of the empirical copula $\bbC_n$ in \eqref{eq:empirical-copula} finally yields
\begin{equation*}
  \bbC_n^\beta(\vecu)=\sum_{s_1=0}^n\cdots\sum_{s_d=0}^n \bbC_n(s_1/n,\ldots,s_d/n)
  \prod_{j=1}^d p_{n,s_j}(u_j)=B_{(n,\ldots,n)}(\bbC_n)(\vecu)
\end{equation*}
as required. 
\end{proof}

Proposition~\ref{prop:Bernstein-genuine-copula} and Lemma~\ref{lem:betaBernstein} confirm that the empirical beta copula is itself a copula. We had already reached this conclusion in Subsection~\ref{subsec:beta} by a direct argument.

\begin{Rem}
Sancetta and Satchell~\cite{Sance-Satch2004} first introduced the Bernstein copula.
In that paper, it is falsely claimed (p.~537) that a function $C$ on $[0,1]^d$ is a copula 
if and only if $C$ is nondecreasing in all its arguments and satisfies the Fr\'{e}chet bounds
\[
  \max\{0,u_1+\cdots+u_d-(d-1)\}\leq C(u_1,\ldots,u_d)\leq
  \min(u_1,\ldots,u_d);
\]
for a counterexample, see Nelsen~\cite[Exercise 2.11]{Nelsen2006}.
Our results above correct both the statement and the proof of their Theorem~1.
\end{Rem}

\subsection{Proximity of the empirical copula and the empirical beta copula}

We provide a deterministic, non-asymptotic bound for the difference between the empirical copula $\bbC_n$ and the empirical beta copula $\bbC_n^\beta$.

\begin{Prop}
\label{prop:nonasymp-bd}
Let $\bbC_n$ and $\bbC_n^\beta$ be the empirical copula and the empirical beta copula, respectively, of a sample $\vecX_1, \ldots, \vecX_n$ of $d$-variate vectors without ties in any of the $d$ components. We have
\begin{equation}
\label{eq:nonasymp-bd}
  \sup_{\vecu\in [0,1]^d}|\bbC_n(\vecu)-\bbC_n^\beta(\vecu)|
  \leq d\{n^{-1/2}(\log n)^{1/2}+n^{-1/2}+n^{-1}\}.
\end{equation}
\end{Prop}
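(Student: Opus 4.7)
The plan is to use a pointwise telescoping product bound, reduce to a one-dimensional quantity via a permutation argument on the ranks, and then estimate the resulting sum through an elementary binomial identity.

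Starting from the defining sums \eqref{eq:empirical-copula} and \eqref{eq:betacopula}, the elementary inequality $|\prod_{j=1}^d a_j - \prod_{j=1}^d b_j| \leq \sum_{j=1}^d |a_j - b_j|$ valid for $a_j, b_j \in [0,1]$, applied inside the average over $i$, gives
\[
  |\bbC_n(\vecu) - \bbC_n^\beta(\vecu)| \leq \sum_{j=1}^d \frac{1}{n}\sum_{i=1}^n \bigl|\indic\{R_{i,j}^{(n)}/n \leq u_j\} - F_{n,R_{i,j}^{(n)}}(u_j)\bigr|.
\]
Since the sample has no ties in any component, $R_{1,j}^{(n)},\ldots,R_{n,j}^{(n)}$ is a permutation of $\{1,\ldots,n\}$ for each $j$, so each inner sum depends only on $u_j$ and is the same function of $u_j$ regardless of $j$. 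Taking the supremum over $\vecu$ therefore reduces the task to bounding
\[
  T(n) := \sup_{u \in [0,1]} \frac{1}{n}\sum_{r=1}^n \bigl|\indic\{r/n \leq u\} - F_{n,r}(u)\bigr|.
\]

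The key observation for controlling $T(n)$ is that $F_{n,r}(u) = \Pr(N(u) \geq r)$, where $N(u) := \#\{i\colon U_i \leq u\}$ has the $\mathrm{Binomial}(n,u)$ distribution. Splitting according to whether $r \leq \lfloor nu \rfloor$ (where the absolute value equals $\Pr(N(u) \leq r-1)$) or $r > \lfloor nu \rfloor$ (where it equals $\Pr(N(u) \geq r)$) and then swapping the finite sum with the expectation produces the clean identity
\[
  \sum_{r=1}^n \bigl|\indic\{r/n \leq u\} - F_{n,r}(u)\bigr| = \E\bigl[(\lfloor nu\rfloor - N(u))^+ + (N(u) - \lfloor nu\rfloor)^+\bigr] = \E\bigl|N(u) - \lfloor nu \rfloor\bigr|.
\]
The triangle inequality and Jensen then yield $\E|N(u) - \lfloor nu \rfloor| \leq \E|N(u) - nu| + 1 \leq \sqrt{nu(1-u)} + 1 \leq \sqrt{n}/2 + 1$, whence $T(n) \leq 1/(2\sqrt{n}) + 1/n$ and finally $\sup_\vecu|\bbC_n(\vecu) - \bbC_n^\beta(\vecu)| \leq d/(2\sqrt{n}) + d/n$, already stronger than the claimed bound.

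There is no serious obstacle; the only subtle point is the Fubini-style exchange producing $\E|N(u) - \lfloor nu \rfloor|$, which must be done with some care about the range of summation. A more pedestrian alternative avoiding that identity would bound $|\indic\{r/n \leq u\} - F_{n,r}(u)|$ pointwise by Hoeffding's inequality $e^{-2(r-nu)^2/n}$, split the $r$-sum into a near region $|r - nu| \leq c\sqrt{n\log n}$ controlled trivially by $1$ and a far tail summed geometrically; this is precisely the route along which the $n^{-1/2}(\log n)^{1/2}$ term in the stated bound arises.
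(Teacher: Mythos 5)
Your proof is correct, and it actually establishes a \emph{sharper} bound than the one claimed. The first step (telescoping the product and bounding by a sum of one-dimensional discrepancies) coincides with the paper's, and both arguments exploit the fact that $R_{1,j}^{(n)},\ldots,R_{n,j}^{(n)}$ is a permutation of $\{1,\ldots,n\}$ --- but in different ways. The paper uses the permutation property only to count the indices $i$ whose rank falls within distance $a$ of $nu_j$ (at most $(2a+1)d$ of them), bounds the remaining terms by Hoeffding's inequality $\exp(-2a^2/n)$, and then optimizes $a=\tfrac12\sqrt{n\log n}$; this is where the $n^{-1/2}(\log n)^{1/2}$ term comes from. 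You instead use the permutation property to collapse the double sum into the exact quantity $\sum_{r=1}^n \lvert \indic\{r\le nu\}-F_{n,r}(u)\rvert$, and your identity $\sum_{r=1}^n \lvert \indic\{r\le nu\}-F_{n,r}(u)\rvert = \E\lvert N(u)-\lfloor nu\rfloor\rvert$ with $N(u)\sim\mathrm{Binomial}(n,u)$ is valid: writing $F_{n,r}(u)=\Pr(N(u)\ge r)$ and splitting at $r=\lfloor nu\rfloor$, the two partial sums telescope to $\E[(\lfloor nu\rfloor-N(u))^+]$ and $\E[(N(u)-\lfloor nu\rfloor)^+]$ respectively. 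With $\E\lvert N(u)-\lfloor nu\rfloor\rvert\le\sqrt{nu(1-u)}+1\le\sqrt{n}/2+1$ this gives $\sup_{\vecu}\lvert\bbC_n(\vecu)-\bbC_n^\beta(\vecu)\rvert\le d\{(2\sqrt{n})^{-1}+n^{-1}\}$, which implies \eqref{eq:nonasymp-bd} and removes the logarithmic factor entirely. What your approach buys is precisely this cleaner rate $O(n^{-1/2})$ without the $\sqrt{\log n}$; what it does not buy (as the paper also notes of its own bound) is anything strong enough to compare the processes $\sqrt{n}(\bbC_n^\beta-C)$ and $\sqrt{n}(\bbC_n-C)$, which still requires the separate analysis of Section~\ref{sec:asym}. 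The only point to state carefully in a final write-up is the exchange of the finite sum over $r$ with the expectation, which is harmless here since everything is nonnegative and finite.
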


\begin{proof}{Proof}
Consider the ranks $R_{i,j}^{(n)}$ as in \eqref{eq:ranks}. Using the identity 
\[
  \prod_{i=1}^d a_i-\prod_{i=1}^d b_i=\sum_{i=1}^d (a_i-b_i)
  \prod_{j=1}^{i-1}b_j\prod_{h=i+1}^d a_h,
\]
we have, for $\vecu \in [0, 1]^d$, combining \eqref{eq:empirical-copula} and \eqref{eq:betacopula},
\begin{align*}
  \lvert \bbC_n(\vecu)-\bbC_n^\beta(\vecu) \rvert
  &\leq \frac{1}{n} \sum_{i=1}^n
  \left\lvert \prod_{j=1}^d \indic\{R_{i,j}^{(n)} / n \leq u_j\}-\prod_{j=1}^d F_{n,R_{i,j}^{(n)}}(u_j) \right\rvert\\
  &\leq\frac{1}{n} \sum_{i=1}^n\sum_{j=1}^d
  \left\lvert \indic\{R_{i,j}^{(n)} \leq nu_j\}-F_{n,R_{i,j}^{(n)}}(u_j) \right\rvert.
\end{align*}
Let $a\geq 0$ and fix $\vecu \in [0, 1]^d$.  Let $I_n(a)$ be the set of indices
$i\in\{1,\ldots,n\}$ such that $R_{i,j}^{(n)}\in [n u_j-a, n u_j+a]$ for some
$j\in\{1,\ldots,d\}$.  The set $I_n(a)$ contains at most $(2a+1)d$ elements
because for each $j\in\{1,\ldots,d\}$, the ranks $R_{1,j}^{(n)},\ldots,R_{n,j}^{(n)}$ 
constitute a permutation of $\{1,\ldots,n\}$.  It follows that
\begin{equation}
\label{eq:nonasymp-bd:1}
  \lvert \bbC_n(\vecu)-\bbC_n^\beta(\vecu) \rvert
  \leq 
  \frac{(2a+1)d}{n} +
  \frac{1}{n} \sum_{\substack{i = 1, \ldots, n\\i\notin I_n(a)}} 
  \sum_{j=1}^d \left\lvert \indic\{R_{i,j}^{(n)} \leq nu_j\} - F_{n,R_{i,j}^{(n)}}(u_j) \right\rvert.
\end{equation}

Let $r\in\{1,\ldots,n\}$ and $u\in [0,1]$. Let $B$ denote a Binomial random variable with $n$ trials and success probability $u$. If $r<nu-a$, then, in view of \eqref{eq:betacdf} and by Hoeffding's inequality, 
\[
  1-F_{n,r}(u) = \Pr(B < r) \leq \Pr(B < nu-a) \leq \exp(-2a^2/n).
\]
Similarly, if $r>nu+a$, then 
\[
  F_{n,r}(u)=\Pr(B \geq r) \leq \Pr(B > nu+a) \leq \exp(-2a^2/n).
\]
The two displays taken together imply that for $r \in \{1, \ldots, n\} \setminus [nu-a, nu+a]$, we have
\begin{equation}
\label{eq:nonasymp-bd:2}
  \lvert \indic\{r \le nu\} - F_{n,r}(u) \rvert \le \exp( - 2a^2/n ).
\end{equation}

Combine \eqref{eq:nonasymp-bd:1} and \eqref{eq:nonasymp-bd:2} to see that
\[
  \lvert \bbC_n(\vecu)-\bbC_n^\beta(\vecu)\rvert \leq \frac{(2a+1)d}{n}+d\exp(-2a^2/n).
\]
The right-hand side does not depend on $\vecu$ and we still have the liberty of choosing $a \ge 0$. Choosing $a=(1/2)\sqrt{n\log n}$ yields \eqref{eq:nonasymp-bd}.
\end{proof}

By Proposition~\ref{prop:nonasymp-bd}, the empirical beta copula is uniformly consistent whenever the empirical copula is. Nevertheless, the bound in \eqref{eq:nonasymp-bd} is too weak to conclude that the empirical processes $\sqrt{n} ( \bbC_n^\beta - C )$ and $\sqrt{n} ( \bbC_n - C )$ are asymptotically the same. This is the topic of the Section~\ref{sec:asym}.

\section{Asymptotics for the empirical beta and Bernstein copulas}
\label{sec:asym}

We provide asymptotic theory for the empirical Bernstein copula process $\sqrt{n} \{ B_{\vecm}( \bbC_n ) - C \}$ under weaker assumptions than those in Janssen et al.~\cite{Jan-Swan-Ver2012}. Here $\vecm = \vecm(n) \in \bbN^d$ is a sequence of multi-indices such that $m_* = \min(m_1, \ldots, m_d) \to \infty$ as $n \to \infty$.

For $\vecm \in \bbN^d$ and $\vecu \in [0, 1]^d$, let $\mu_{\vecm, \vecu}$ be the law of the random vector $(S_1/m_1, \ldots, S_d/m_d)$, where $S_1, \ldots, S_d$ are independent random variables, the law of $S_j$ being Binomial$(m_j, u_j)$ for each $j \in \{1, \ldots, d\}$. For asymptotic analysis, it is convenient to write the empirical Bernstein copula as a mixture:
\[
  B_{\vecm}(\bbC_n)(\vecu)=\int_{[0,1]^d}\bbC_n(\vecw)\,{\d}\mu_{\vecm,\vecu}(\vecw),
  \qquad \vecu \in [0, 1]^d.
\]

The empirical copula process and the empirical Bernstein copula process are defined by
\begin{align*}
  \bbG_n &= \sqrt{n}(\bbC_n-C), \\
  \bbG_{n,\vecm} &= \sqrt{n} \{ B_{\vecm}( \bbC_n ) - C \}.
\end{align*}
In view of Lemma~\ref{lem:betaBernstein}, the special case $\vecm = (n, \ldots, n)$ yields 
the empirical beta copula process:
\[
  \bbG_n^\beta = \sqrt{n} ( \bbC_n^\beta - C ).
\]

Noting that $B_{\vecm}(C)(\vecu)=\int_{[0,1]^d}C(\vecw)\,{\d}\mu_{\vecm,\vecu}(\vecw)$, 
we have
\begin{equation}
  \label{eq:empirical-decomp}
  \bbG_{n,\vecm}(\vecu)
  =
  \int_{[0,1]^d}\bbG_n(\vecw)\,{\d}\mu_{\vecm,\vecu}(\vecw)
  +
  \sqrt{n}\{B_{\vecm}(C)(\vecu)-C(\vecu)\}.
\end{equation}
We call the first and second term on the right-hand side of \eqref{eq:empirical-decomp} the stochastic term and the bias term, respectively. We deal with these terms in Subsections~\ref{subsec:stoch} and~\ref{subsec:bias}. The combination of both analyses in Subsection~\ref{subsec:thm} then yields our main result, Theorem~\ref{thm:empBernstein} below, on the asymptotic distribution of the empirical Bernstein copula process.

\subsection{Stochastic term}
\label{subsec:stoch}

Let $\ell^\infty([0,1]^d)$ be the Banach space of real-valued, bounded functions on $[0,1]^d$,
equipped with the supremum norm $\lVert \cdot \rVert_\infty$. The arrow $\rightsquigarrow$ denotes
weak convergence in the sense used in van der Vaart and Wellner~\cite{Vaart-Wellner}.

If $\vecX_1, \vecX_2, \ldots$ are independent and identically distributed random vectors from a common continuous distribution with copula $C$, then, provided $C$ satisfies Condition~\ref{cond:S} below, we have (Segers~\cite{Segers2012})
\[
  \bbG_n\rightsquigarrow\bbG^C, \qquad n \to \infty
\]
in $\ell^\infty([0, 1]^d)$, where
\begin{equation}
\label{eq:empCopProcLimit}
  \bbG^C(\vecu) = \bbU^C(\vecu)-\sum_{j=1}^d \dot{C}_j(\vecu) \, \bbU^C(\indic,u_j,\indic)
\end{equation}
and where $\bbU^C$ is a centered, Gaussian process with continuous trajectories and covariance function
\begin{equation}
\label{eq:empCopProcLimitCov}
  \Cov \{ \bbU^C(\vecu),\bbU^C(\vecv) \} 
  = C(\vecu\wedge\vecv)-C(\vecu) \, C(\vecv),
  \qquad \vecu, \vecv \in [0, 1]^d.
\end{equation}
More generally, if $\vecX_1, \vecX_2, \ldots$ is a strictly stationary time series whose stationary $d$-variate distribution is continuous and has copula $C$ satisfying Condition~\ref{cond:S}, then, provided certain mixing or weak dependence conditions hold, we still have weak convergence in $\ell^\infty([0, 1]^d)$ of the empirical copula process, as shown in B\"{u}cher and Volgushev~\cite{Buech-Volg2013}:
\begin{equation}
\label{eq:bbGn2bbG}
  \bbG_n \rightsquigarrow \bbG, \qquad n \to \infty,
\end{equation}
with $\bbG$ a tight, centered Gaussian process on $[0, 1]^d$ whose covariance structure depends on the full distribution of the time series.

In the analysis of the stochastic term in \eqref{eq:empirical-decomp}, the weak convergence in \eqref{eq:bbGn2bbG} is all that we need to know of the empirical copula process.

\begin{Prop}
\label{lem:stochastic-term}
If $\bbG_n\rightsquigarrow\bbG$ in $\ell^\infty([0,1]^d)$ as $n\tendsto\infty$, and if the limiting process $\bbG$ has continuous trajectories almost surely, then, whenever $m_*=m_*(n)\tendsto\infty$ as $n\tendsto\infty$,  
\[
  \sup_{\vecu\in [0,1]^d}\left|\int_{[0,1]^d}\bbG_n(\vecw)\,{\d}\mu_{\vecm,\vecu}(\vecw)
  -\bbG_n(\vecu)\right|=\oP(1). 
\]
\end{Prop}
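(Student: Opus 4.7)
The plan is to exploit the weak convergence $\bbG_n \rightsquigarrow \bbG$ with continuous limiting sample paths, together with the fact that the Binomial-product measure $\mu_{\vecm,\vecu}$ concentrates at $\vecu$ as $m_* \to \infty$, uniformly in $\vecu \in [0,1]^d$. The starting point is the trivial identity
\[
  \int_{[0,1]^d} \bbG_n(\vecw)\,\d\mu_{\vecm,\vecu}(\vecw) - \bbG_n(\vecu)
  = \int_{[0,1]^d} \bigl\{\bbG_n(\vecw) - \bbG_n(\vecu)\bigr\}\,\d\mu_{\vecm,\vecu}(\vecw),
\]
since $\mu_{\vecm,\vecu}$ is a probability measure. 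The task reduces to showing that this integral is small uniformly in $\vecu$.

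Next I would invoke asymptotic equicontinuity in probability. Because $\bbG_n \rightsquigarrow \bbG$ in $\ell^\infty([0,1]^d)$ and the limit is tight with a.s.\ continuous trajectories, the sequence $\{\bbG_n\}$ is asymptotically uniformly equicontinuous in probability: for every $\eta > 0$ and $\eps > 0$, there exists $\delta > 0$ such that
\[
  \limsup_{n\to\infty} \Pr^{*}\!\Bigl( \sup_{\lVert \vecw - \vecu \rVert \le \delta} \lvert \bbG_n(\vecw) - \bbG_n(\vecu) \rvert > \eta \Bigr) < \eps.
\]
Simultaneously, weak convergence implies $\lVert \bbG_n \rVert_\infty = \OP(1)$.

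For each fixed $\delta > 0$, I would split the integral according to whether $\lVert \vecw - \vecu \rVert \le \delta$ or not. On the small-distance piece, the asymptotic equicontinuity gives a uniform bound by the modulus of continuity of $\bbG_n$. On the large-distance piece, I use the crude bound $2 \lVert \bbG_n \rVert_\infty$ times the tail mass $\mu_{\vecm,\vecu}(\lVert \vecw - \vecu \rVert > \delta)$. Applying a union bound over coordinates and Chebyshev's inequality to each Binomial marginal yields
\[
  \mu_{\vecm,\vecu}\bigl(\lVert \vecw - \vecu \rVert > \delta\bigr)
  \le \sum_{j=1}^d \frac{u_j(1 - u_j)}{m_j (\delta/\sqrt{d})^{2}}
  \le \frac{d^{2}}{4\, m_{*}\, \delta^{2}},
\]
uniformly in $\vecu \in [0,1]^d$. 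Hence the large-distance contribution is bounded by $\OP(1) \cdot O(1/(m_{*}\delta^{2})) = \oP(1)$ whenever $\delta$ is fixed and $m_* \to \infty$.

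Combining both pieces, for any $\eps, \eta > 0$ one chooses $\delta > 0$ via the equicontinuity property, then lets $n \to \infty$ so that $m_* \to \infty$; the small-distance term contributes at most $\eta$ with probability at least $1 - \eps$, and the large-distance term vanishes in probability. The supremum over $\vecu$ is handled uniformly because both the modulus bound and the Chebyshev estimate are uniform in $\vecu$. The only delicate point is that asymptotic equicontinuity and tightness must be invoked in the outer-probability sense appropriate to weak convergence in $\ell^\infty([0,1]^d)$; this is standard (see \cite{Vaart-Wellner}) and is the main technical ingredient that makes the argument work without requiring any quantitative rate on $m_*$ beyond $m_* \to \infty$.
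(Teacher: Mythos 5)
Your proposal is correct and follows essentially the same route as the paper's proof: the same reduction to $\int \{\bbG_n(\vecw)-\bbG_n(\vecu)\}\,\d\mu_{\vecm,\vecu}(\vecw)$, the same split at distance $\delta$ combined with asymptotic equicontinuity and $\lVert\bbG_n\rVert_\infty=\OP(1)$, and the same Chebyshev bound on the Binomial tail mass uniform in $\vecu$ (the paper works with the maximum norm, giving the slightly cleaner constant $d/(4\eps^2 m_*)$, but this is immaterial).
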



\begin{proof}{Proof}
Let $|\cdot |_\infty$ denote the maximum norm on $\bbR^d$.
For $\epsilon>0$, we have 
\begin{multline*}
  \int_{[0,1]^d} \lvert \bbG_n(\vecw)-\bbG_n(\vecu) \rvert \,{\d}\mu_{\vecm,\vecu}(\vecw)\\
  \leq 2\, \lVert\bbG_n\rVert_\infty \, \mu_{\vecm,\vecu}(\{\vecw\colon \lvert \vecw-\vecu \rvert_\infty>\epsilon\})
  +\sup_{ \lvert \vecx-\vecy \rvert_\infty\leq\epsilon} \lvert \bbG_n(\vecx)-\bbG_n(\vecy) \rvert.
\end{multline*}
By the assumption, $\bbG_n$ is stochastically equicontinuous.  Hence, for a given $\eta>0$, 
we can find $\epsilon=\epsilon(\eta)>0$ sufficiently small such that
\[
  \limsup_{n\tendsto\infty}\Pr\biggl\{\sup_{|\vecx-\vecy|_\infty\leq\epsilon}|\bbG_n(\vecx)-\bbG_n(\vecy)|
  >\eta\biggr\}\leq\eta.
\]
Moreover, the weak convergence $\bbG_n\rightsquigarrow\bbG$ implies that
$\|\bbG_n\|_\infty=\OP(1)$ as $n\tendsto\infty$.  Finally, Chebyshev's inequality yields 
\[
  \mu_{\vecm,\vecu}(\{\vecw\colon \lvert\vecw-\vecu\rvert_\infty>\epsilon\})
  \leq\sum_{j=1}^d\Pr( \lvert S_j/m_j-u_j \rvert > \epsilon)\leq\frac{d}{4\epsilon^2m_*},
\]
which converges to zero as $m_*\tendsto\infty$, uniformly in $\vecu\in [0,1]^d$.  
Since $\eta$ was arbitrary, the conclusion follows.
\end{proof}

\subsection{Bias term}
\label{subsec:bias}

Now we turn to the bias term on the right-hand side of \eqref{eq:empirical-decomp}:
\[
  \sqrt{n}\{B_{\vecm}(C)(\vecu)-C(\vecu)\}=
  \sqrt{n}\int_{[0,1]^d} \{ C(\vecw)-C(\vecu) \} \,{\d}\mu_{\vecm,\vecu}(\vecw).
\]
One-dimensional Bernstein polynomials are well studied, and results on the accuracy
of the approximation of a function by its associated Bernstein polynomial are presented in Lorentz~\cite{Lorentz86} and DeVore and 
Lorentz~\cite[Chapter 10]{DeVore-Lorentz93}.  We need to extend these to the multidimensional case.

\begin{Lem}
\label{lem:bias-term1}
For a $d$-variate copula $C$ and a multi-index $\vecm \in \bbN^d$ we have, writing $m_* = \min(m_1, \ldots, m_d)$,
\[
  \lVert B_{\vecm}(C) - C \rVert_\infty
  \leq
  \frac{d}{2\sqrt{m_*}}.
\]
\end{Lem}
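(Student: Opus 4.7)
The plan is to exploit two elementary facts: (a) $B_{\vecm}(C)(\vecu)$ is exactly the expectation of $C$ evaluated at the independent binomial average $(S_1/m_1,\ldots,S_d/m_d)$, where $S_j\sim\mathrm{Binomial}(m_j,u_j)$, and (b) every copula is $1$-Lipschitz in each argument, hence
\[
  \lvert C(\vecw)-C(\vecu) \rvert \leq \sum_{j=1}^d \lvert w_j - u_j \rvert
\]
for all $\vecu,\vecw \in [0,1]^d$.

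First I would rewrite the bias pointwise as
\[
  B_{\vecm}(C)(\vecu) - C(\vecu)
  = \int_{[0,1]^d}\{C(\vecw)-C(\vecu)\}\,\d\mu_{\vecm,\vecu}(\vecw)
  = \E\bigl[C(S_1/m_1,\ldots,S_d/m_d) - C(\vecu)\bigr].
\]
Then I apply the Lipschitz bound inside the expectation to get
\[
  \lvert B_{\vecm}(C)(\vecu) - C(\vecu) \rvert
  \leq \sum_{j=1}^d \E\lvert S_j/m_j - u_j \rvert.
\]

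Next I would bound each term by Jensen's inequality:
\[
  \E\lvert S_j/m_j - u_j\rvert
  \leq \sqrt{\Var(S_j/m_j)}
  = \sqrt{u_j(1-u_j)/m_j}
  \leq \frac{1}{2\sqrt{m_j}}
  \leq \frac{1}{2\sqrt{m_*}},
\]
using $u_j(1-u_j)\leq 1/4$ and $m_j\geq m_*$. Summing over $j=1,\ldots,d$ and taking the supremum over $\vecu\in[0,1]^d$ yields the asserted bound $d/(2\sqrt{m_*})$.

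There is no real obstacle here: the argument is standard and uses only the probabilistic representation of Bernstein polynomials together with the Lipschitz property of copulas. The only point worth emphasising explicitly (so that the $d$-dimensional case reduces neatly to the one-dimensional Bernstein estimate) is that the marginal law of $S_j/m_j$ under $\mu_{\vecm,\vecu}$ depends only on $u_j$ and $m_j$, so the multivariate variance computation collapses to the scalar binomial variance $u_j(1-u_j)/m_j$, which is maximised at $u_j=1/2$.
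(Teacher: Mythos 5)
Your proof is correct and follows essentially the same route as the paper: the $L_1$-Lipschitz property of copulas, the representation of $B_{\vecm}(C)(\vecu)$ as an expectation over independent binomials, and the bound $\E\lvert S_j/m_j-u_j\rvert\le\sqrt{\Var(S_j/m_j)}\le 1/(2\sqrt{m_j})$ (which the paper attributes to Cauchy--Schwarz rather than Jensen, but the step is the same).
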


\begin{proof}{Proof}
Any copula is Lipschitz with respect to the $L_1$-norm with Lipschitz constant equal to $1$. We obtain
\[
  \lVert B_{\vecm}(C) - C \rVert_\infty
  \leq
  \int_{[0,1]^d} \lvert C(\vecw)-C(\vecu) \rvert \,{\d}\mu_{\vecm,\vecu}(\vecw)
  \leq\sum_{j=1}^d\int_{[0,1]^d} \lvert w_j-u_j \rvert \,{\d}\mu_{\vecm,\vecu}(\vecw).
\]
Fix $j \in \{1, \ldots, d\}$ and let $S_j$ be a Binomial$(m_j, u_j)$ random variable. By the Cauchy--Schwarz inequality,
\begin{equation}
\label{ineq:binomial-bound}
  \int_{[0,1]^d} \lvert w_j-u_j \rvert \,{\d}\mu_{\vecm,\vecu}(\vecw) 
  \leq \E ( \lvert S_j/m_j - u_j \rvert )
  \le \sqrt{ \Var(S_j/m_j) }
  \le \frac{1}{2\sqrt{m_j}},
\end{equation}
which yields the conclusion.
\end{proof}

According to Lemma~\ref{lem:bias-term1}, the uniform convergence rate of $B_{\vecm}(C)$
to $C$ is at least $O(1/\sqrt{m_*})$ as $m_*\tendsto\infty$.  With a bit of additional 
smoothness, this can be strengthened to $o(1/\sqrt{m_*})$.  The following condition
originates from Segers~\cite{Segers2012}.

\begin{Cond}
\label{cond:S}
For each $j \in \{1,\ldots,d\}$, the copula $C$ has a continuous 
first-order partial derivative $\dot{C}_j(\vecu)=\partial C(\vecu)/\partial u_j$ 
on the set $I_j = \{\vecu\in [0,1]^d\colon 0<u_j<1\}$.  
\end{Cond}


\begin{Prop}
\label{lem:bias-term2}
If the copula $C$ satisfies Condition~\ref{cond:S}, then
\[
  \lim_{m_*\tendsto\infty}\sqrt{m_*} \lVert B_{\vecm}(C) - C \rVert_\infty = 0.
\]
\end{Prop}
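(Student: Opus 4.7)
The plan is to reduce the multivariate bias to a sum of one-dimensional increments via telescoping and then exploit the fact that $\mu_{\vecm,\vecu}$ is centered at $\vecu$ to cancel the first-order term. Writing $\vecu^{(0)}=\vecu$ and $\vecu^{(j)}=(w_1,\ldots,w_j,u_{j+1},\ldots,u_d)$, I decompose
\[
  C(\vecw)-C(\vecu) = \sum_{j=1}^d \bigl[C(\vecu^{(j)})-C(\vecu^{(j-1)})\bigr].
\]
For $u_j \in (0,1)$, the Lipschitz property of $C$ in the $j$-th argument and the continuity of $\dot{C}_j$ on $I_j$ justify the fundamental theorem of calculus,
\[
  C(\vecu^{(j)})-C(\vecu^{(j-1)}) = \int_{u_j}^{w_j} \dot{C}_j(w_1,\ldots,w_{j-1},t,u_{j+1},\ldots,u_d)\,\mathrm{d}t,
\]
whereas for $u_j \in \{0,1\}$ the binomial satisfies $S_j/m_j = u_j$ $\mu_{\vecm,\vecu}$-a.s., so the $j$-th increment vanishes identically. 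Integrating against $\mu_{\vecm,\vecu}$ and subtracting the zero-mean term $\dot{C}_j(\vecu)(w_j-u_j)$ yields $B_{\vecm}(C)(\vecu)-C(\vecu) = \sum_{j=1}^d A_j(\vecu)$ with
\[
  A_j(\vecu) = \int_{[0,1]^d}\int_{u_j}^{w_j}\bigl[\dot{C}_j(w_1,\ldots,w_{j-1},t,u_{j+1},\ldots,u_d)-\dot{C}_j(\vecu)\bigr]\,\mathrm{d}t\,\mathrm{d}\mu_{\vecm,\vecu}(\vecw).
\]

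The task is then to prove $\sup_{\vecu}\sqrt{m_*}\,|A_j(\vecu)| \to 0$ for each $j$. Two facts will be exploited: although $\dot{C}_j$ need not be uniformly continuous on $I_j$, its restriction to the compact slab $K_\delta^{(j)} := \{\vecv \in [0,1]^d : \delta \leq v_j \leq 1-\delta\}$ is uniformly continuous for every $\delta>0$, and $0 \leq \dot{C}_j \leq 1$ by the $1$-Lipschitz property of $C$. Fix $\eta>0$ and split the supremum according to the location of $u_j$ into an \emph{interior regime} $u_j \in [2\delta, 1-2\delta]$ and a \emph{boundary regime} $u_j \in [0,2\delta)\cup(1-2\delta,1]$, with $\delta=\delta(\eta)>0$ to be chosen.

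In the interior regime, pick $\rho \in (0,\delta]$ so small that $\dot{C}_j$ has oscillation at most $\eta$ over any subset of $K_\delta^{(j)}$ of $\lvert\cdot\rvert_\infty$-diameter $\rho$. On the event $\{|\vecw-\vecu|_\infty \leq \rho\}$, both $\vecu$ and $(w_1,\ldots,w_{j-1},t,u_{j+1},\ldots,u_d)$ lie in $K_\delta^{(j)}$ and are within distance $\rho$, so the bracketed integrand is at most $\eta$; combining with the Cauchy--Schwarz bound $\int|w_j-u_j|\,\mathrm{d}\mu_{\vecm,\vecu} \leq 1/(2\sqrt{m_j})$ from the proof of Lemma~\ref{lem:bias-term1} contributes at most $\eta/(2\sqrt{m_*})$ to $|A_j|$. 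On the complementary event, Hoeffding's inequality gives $\mu_{\vecm,\vecu}$-probability at most $2d\exp(-2\rho^2 m_*)$, and a second application of Cauchy--Schwarz renders this contribution exponentially small in $m_*$. In the boundary regime, I use the crude bound $2$ for the bracketed difference together with $\int|w_j-u_j|\,\mathrm{d}\mu_{\vecm,\vecu} \leq \sqrt{u_j(1-u_j)/m_j} \leq \sqrt{2\delta/m_*}$, contributing at most $2\sqrt{2\delta/m_*}$. Choosing $\delta$ with $2\sqrt{2\delta} \leq \eta$, both regimes yield $\sqrt{m_*}\,|A_j(\vecu)| \leq C\eta$ uniformly in $\vecu$ for $m_*$ sufficiently large; summing over $j$ and letting $\eta\downarrow 0$ delivers the claim.

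The main obstacle is precisely the absence of global uniform continuity of $\dot{C}_j$ near the faces $\{u_j=0\}$ and $\{u_j=1\}$ of the unit cube, which is what forces the two-regime split rather than a single application of uniform continuity. It is circumvented by the observation that on those very faces the binomial variance $u_j(1-u_j)/m_j$ is itself small, so a crude pointwise bound on $\dot{C}_j$ suffices there to retain the desired $o(1/\sqrt{m_*})$ rate.
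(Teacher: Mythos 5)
Your proof is correct and follows essentially the same route as the paper's: a fundamental-theorem-of-calculus representation of $C(\vecw)-C(\vecu)$ in terms of the $\dot{C}_j$, cancellation of the first-order term via $\int (w_j-u_j)\,\mathrm{d}\mu_{\vecm,\vecu}(\vecw)=0$, and a split between a boundary regime (controlled by the binomial variance bound $\sqrt{u_j(1-u_j)/m_j}$) and an interior regime (controlled by uniform continuity of $\dot{C}_j$ on compact slabs plus a concentration bound for the tail event). The only differences are cosmetic: you telescope coordinate-by-coordinate where the paper integrates along the diagonal segment $\vecu+t(\vecw-\vecu)$, and you use Hoeffding where the paper uses Chebyshev and Markov for the tail.
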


\begin{proof}{Proof}
By monotonicity and Lipschitz continuity of $C$, we have
$0\leq \dot{C}_j\leq 1$.  Fix $\vecu,\,\vecw\in [0,1]^d$ and write
$\vecw(t)=\vecu+t(\vecw-\vecu)$ for $t\in [0,1]$.  The function 
$t\mapsto C(\vecw(t))$ is continuous and, by Condition~\ref{cond:S}, continuously differentiable 
on $(0,1)$ with derivative 
\[
  \frac{\d}{\d t}C(\vecw(t))=\sum_{j=1}^d(w_j-u_j) \, \dot{C}_j(\vecw(t)), \qquad t\in (0,1).
\]
For each $j$ such that $w_j=u_j$, the corresponding term in the sum is zero
no matter how $\dot{C}_j(\vecw(t))$ is defined.  If $w_j\not= u_j$, then $0<u_j+t(w_j-u_j)<1$ 
whenever $0<t<1$, so that the partial derivative $\dot{C}_j$ is well defined at $\vecw(t)$ by Condition~\ref{cond:S}.  
By the fundamental theorem of calculus, we get
\[
  C(\vecw)-C(\vecu)=\sum_{j=1}^d(w_j-u_j)\int_0^1 \dot{C}_j(\vecw(t))\,{\d}t.
\]
Hence, by Fubini's theorem,
\begin{align*}
  B_{\vecm}(C)(\vecu) - C(\vecu)
  &= \int_{[0, 1]^d} \{ C( \vecw ) - C( \vecu ) \} \, {\d} \mu_{\vecm, \vecu}( \vecw ) \\
  &= \sum_{j=1}^d\int_0^1
  \left\{ \int_{[0,1]^d}(w_j-u_j)\,\dot{C}_j(\vecw(t))\,{\d}\mu_{\vecm,\vecu}(\vecw)\right\}\,{\d}t.
\end{align*}
We need to bound its absolute value uniformly in $\vecu\in [0,1]^d$.  
Fix $j\in\{1,\ldots,d\}$ and $t\in [0,1]$.  Choose $0<\delta\leq 1/2$. 

First, let $\vecu\in [0,1]^d$ be such that $u_j\in [0,\delta)\cup (1-\delta,1]$.
Since $0\leq\dot{C}_j\leq 1$, we find
\[
  \left\lvert \int_{[0,1]^d}(w_j-u_j) \, \dot{C}_j(\vecw(t))\,{\d}\mu_{\vecm,\vecu}(\vecw)\right\rvert
  \leq \int_{[0,1]^d} \lvert w_j-u_j \rvert \,{\d}\mu_{\vecm,\vecu}(\vecw)
  \leq \sqrt{\frac{u_j(1-u_j)}{m_j}}
  < \sqrt{\frac{\delta}{m_*}}.
\]

Second, let $\vecu\in [0,1]^d$ be such that $u_j\in [\delta, 1-\delta]$.
Then $\dot{C}_j(\vecu)$ is well defined.  Since $\int_{[0,1]^d}(w_j-u_j)\,{\d}\mu_{\vecm,\vecu}
(\vecw)=0$, we obtain
\[
  B_{\vecm}(C)(\vecu)-C(\vecu)=\sum_{j=1}^d \int_0^1 \left[
  \int_{[0,1]^d}(w_j-u_j) \bigl\{ \dot{C}_j(\vecw(t))-\dot{C}_j(\vecu) \bigr\} \,{\d}\mu_{\vecm,\vecu}(\vecw) \right] {\d}t.
\]
Choose $0<\epsilon\leq\delta/2$.  Let $|\cdot |_\infty$ denote the maximum norm
on $\bbR^d$.  Split the integral over $\vecw\in [0,1]^d$ into two parts,
according to whether $|\vecw-\vecu|_\infty$ is larger than $\epsilon$ or not.  
We have 
\begin{multline*}
  \left\lvert \int_{[0,1]^d}(w_j-u_j) \bigl\{ \dot{C}_j(\vecw(t))-\dot{C}_j(\vecu) \bigr\}
    \,{\d}\mu_{\vecm,\vecu}(\vecw)\right\rvert \\
  \leq
  \sup_{ \lvert \vecw-\vecu \rvert_\infty\leq\epsilon} \lvert \dot{C}_j(\vecw(t))-\dot{C}_j(\vecu) \rvert
  \int_{[0,1]^d} \lvert w_j - u_j \rvert \, {\d} \mu_{\vecm, \vecu}(\vecw) \\
  +
  \int_{[0,1]^d} \indic\{\lvert\vecw-\vecu\rvert_\infty >\epsilon\} \, \lvert w_j-u_j \rvert \,{\d}\mu_{\vecm,\vecu}(\vecw).
\end{multline*}
Here we have used the inequality $0 \le \dot{C}_j \le 1$ again. We now treat both terms on the right-hand side separately.
\begin{itemize}
\item
First, fix $\eta>0$.  By Condition~\ref{cond:S}, we can choose $\epsilon=\epsilon(\delta,\eta)>0$
sufficiently small such that, for each $j\in\{1,\ldots,d\}$, 
\[
  \sup_{\substack{\vecu\in [0,1]^d\\ \delta\leq u_j\leq 1-\delta}}
  \sup_{\lvert \vecw-\vecu\rvert_\infty\leq\epsilon}
  \lvert \dot{C}_j(\vecw)-\dot{C}_j(\vecu) \rvert
  \leq 
  \eta.
\]
Furthermore, we have $\int_{[0, 1]^d} \lvert w_j - u_j \rvert \, {\d} \mu_{\vecm, \vecu}(\vecw) \le 1 / (2\sqrt{m_*})$ by \eqref{ineq:binomial-bound}.
\item
Second, by the Cauchy--Schwarz inequality,
\[
  \int_{[0,1]^d}
    \indic\{ \lvert \vecw-\vecu \rvert_\infty > \epsilon\} \, \lvert w_j-u_j \rvert \,
  {\d}\mu_{\vecm,\vecu}(\vecw)
  \leq
  \sqrt{
    \frac%
      {\mu_{\vecm,\vecu}(\{\vecw\colon \lvert \vecw - \vecu \rvert_\infty >\epsilon\})}%
      {m_*}
  }.
\]
By the Markov inequality and \eqref{ineq:binomial-bound}, the numerator of the right-hand side
converges to zero as $m_*\tendsto\infty$, uniformly in $\vecu\in [0,1]^d$.  
\end{itemize}

By the above bounds, it follows that $\limsup_{m_*\tendsto\infty}
\sqrt{m_*}\rVert B_{\vecm}(C)-C \rVert_\infty\leq\sqrt{\delta}+\eta/2$.  Since $\delta >0$ and
$\eta>0$ were arbitrary, the result follows.
\end{proof}

Condition~\ref{cond:S} on $C$ is significantly weaker than the assumption that $C$ has bounded third-order partial derivatives, which Janssen et al.~(2012) require to prove their asymptotic results for the empirical Bernstein copula. The latter assumption is violated by many common parametric copula families. Janssen et al.~(2012) in fact proved a pointwise $O(m_*^{-1})$ convergence rate of the bias term under the assumption that $C$ has bounded third-order partial derivatives, but to obtain such a rate, the assumption is still too strong. In fact, a Lipschitz condition on the first-order partial derivatives already suffices.

\begin{Prop}
\label{lem:bias-term3}
Let $C$ be a copula satisfying Condition~\ref{cond:S}. Let $\vecu \in (0, 1]^d$ and suppose that there exists $\eps > 0$ such that, for each $j \in \{1, \ldots, d\}$ such that $u_j < 1$, we have $0 < u_j - \eps < u_j + \eps < 1$ and $\dot{C}_j$ is Lipschitz on $\{ \vecw \in [0, 1]^d : \lvert \vecw - \vecu \rvert_\infty \le \eps \}$. Then
\[
  \sup_{\vecm} m_* \lvert B_{\vecm}(C)(\vecu) - C(\vecu) \rvert < \infty.
\]
\end{Prop}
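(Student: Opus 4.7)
The plan is to sharpen the argument of Proposition~\ref{lem:bias-term2} by exploiting the Lipschitz property of the partial derivatives $\dot{C}_j$ near $\vecu$. Starting from the identity derived there via the fundamental theorem of calculus and Fubini,
\[
  B_{\vecm}(C)(\vecu) - C(\vecu)
  = \sum_{j=1}^d \int_0^1 \int_{[0,1]^d} (w_j - u_j) \, \dot{C}_j(\vecw(t)) \, {\d}\mu_{\vecm,\vecu}(\vecw) \, {\d}t
\]
with $\vecw(t) = \vecu + t(\vecw - \vecu)$, I would first eliminate those indices $j$ with $u_j = 1$: the binomial $S_j$ then equals $m_j$ almost surely, so $w_j - u_j = 0$ on the support of $\mu_{\vecm, \vecu}$ and the $j$-th summand vanishes irrespective of how $\dot{C}_j$ is defined there. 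For the remaining $j$ with $u_j < 1$, I would use the centering identity $\int_{[0,1]^d} (w_j - u_j) \, {\d}\mu_{\vecm, \vecu}(\vecw) = 0$ to subtract the constant $\dot{C}_j(\vecu)$, arriving at
\[
  B_{\vecm}(C)(\vecu) - C(\vecu) = \sum_{j:\, u_j < 1} \int_0^1 \int_{[0,1]^d} (w_j - u_j) \{\dot{C}_j(\vecw(t)) - \dot{C}_j(\vecu)\} \, {\d}\mu_{\vecm,\vecu}(\vecw) \, {\d}t.
\]

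For each such $j$ I would split the inner integral over $\vecw$ into the near region $A_\eps := \{\vecw \in [0,1]^d : \lvert \vecw - \vecu \rvert_\infty \le \eps\}$ and its complement. On $A_\eps$, the convex combination $\vecw(t)$ also lies in $A_\eps$ for every $t \in [0,1]$, which is contained in the box on which $\dot{C}_j$ is Lipschitz by hypothesis; denoting the Lipschitz constant by $L$, this yields $\lvert \dot{C}_j(\vecw(t)) - \dot{C}_j(\vecu) \rvert \le L \sum_{k=1}^d \lvert w_k - u_k \rvert$. Hence the contribution from $A_\eps$ is bounded by $L \sum_{k=1}^d \int \lvert w_j - u_j \rvert \, \lvert w_k - u_k \rvert \, {\d}\mu_{\vecm, \vecu}$, and each such integral is $O(1/m_*)$: for $k = j$ it equals $u_j(1-u_j)/m_j$, whereas for $k \ne j$ the independence of $S_j$ and $S_k$ together with the bound $\E \lvert S_i/m_i - u_i \rvert \le 1/(2\sqrt{m_i})$ from \eqref{ineq:binomial-bound} gives a value of at most $1/(4m_*)$.

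On the complement $A_\eps^c$, I would use the trivial bound $\lvert \dot{C}_j(\vecw(t)) - \dot{C}_j(\vecu) \rvert \le 1$ together with Hoeffding's inequality, which furnishes $\mu_{\vecm, \vecu}(A_\eps^c) \le 2d \exp(-2m_* \eps^2)$ by a union bound over the $d$ coordinates. Cauchy--Schwarz then gives
\[
  \int_{A_\eps^c} \lvert w_j - u_j \rvert \, {\d}\mu_{\vecm, \vecu}
  \le \sqrt{\mu_{\vecm, \vecu}(A_\eps^c)} \cdot \sqrt{u_j(1-u_j)/m_j},
\]
which decays exponentially in $m_*$ and is therefore $o(1/m_*)$. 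Summing the $O(1/m_*)$ near-region contributions and the $o(1/m_*)$ far-region contributions over the finitely many indices $j$ yields $\lvert B_{\vecm}(C)(\vecu) - C(\vecu) \rvert = O(1/m_*)$, whence $\sup_{\vecm} m_* \lvert B_{\vecm}(C)(\vecu) - C(\vecu) \rvert < \infty$ as claimed. The only point requiring a little care — but no real obstacle — is to check that $\vecw(t)$ stays in the Lipschitz box for every $t \in [0,1]$ whenever $\vecw \in A_\eps$, which is immediate from convexity of that box together with $\vecu \in A_\eps$.
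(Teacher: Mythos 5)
Your proof is correct and follows essentially the same route as the paper: eliminate the coordinates with $u_j=1$, center by $\dot{C}_j(\vecu)$, split at $\lvert\vecw-\vecu\rvert_\infty\le\eps$, and control the near region via the Lipschitz bound and second-moment (Cauchy--Schwarz) estimates. The only difference is cosmetic: for the far region the paper bounds the indicator by $\eps^{-1}\sum_k\lvert w_k-u_k\rvert$ and reuses the same product moments, whereas you use Hoeffding plus Cauchy--Schwarz to get an exponentially small tail; both yield the required $O(1/m_*)$.
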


\begin{proof}{Proof}
If $u_j = 1$ for some $j \in \{1, \ldots, d\}$, then, since the Binomial$(m_j, u_j = 1)$ distribution is concentrated at $m_j$, we can omit the $j$th coordinate altogether and pass to the appropriate $(d-1)$-dimensional margins of $B_{\vecm}(C)$ and $C$. In this way, we can eliminate all margins $j$ such that $u_j = 1$. It follows that, without loss of generality, we can assume that $\vecu \in (0, 1)^d$.

As in the proof of Proposition~\ref{lem:bias-term2}, we have the representation
\[
  B_{\vecm}(C)(\vecu) - C(\vecu)
  =
  \sum_{j=1}^d 
  \int_0^1 
    \left[
      \int_{[0,1]^d}
        (w_j-u_j) \,
	\bigl\{ \dot{C}_j(\vecw(t)) - \dot{C}_j(\vecu) \bigr\} \,
      {\d}\mu_{\vecm,\vecu}(\vecw) 
    \right] 
  {\d}t.
\]
We fix $t \in (0, 1)$ and $j \in \{1, \ldots, d\}$ and split the integral over $\vecw$ into two parts, according to whether $\lvert \vecw - \vecu \rvert_\infty$ is larger than $\eps$ or not. This yields the following bound:
\begin{multline}
\label{eq:integral_decomp}
  m_*
  \left\lvert
    \int_{[0,1]^d}
      (w_j-u_j) \,
      \bigl\{ \dot{C}_j(\vecw(t)) - \dot{C}_j(\vecu) \bigr\} \,
    {\d}\mu_{\vecm,\vecu}(\vecw) 
  \right\rvert
  \\
  \le
  \int_{[0, 1]^d}
    m_* \, 
    \lvert w_j - u_j \rvert \,
    \lvert \dot{C}_j(\vecw(t)) - \dot{C}_j(\vecu) \rvert \,
    \indic \{ \lvert \vecw - \vecu \rvert_\infty \le \eps \} \,
  {\d}\mu_{\vecm,\vecu}(\vecw) \\
  +
  \int_{[0, 1]^d}
    m_* \,
    \lvert w_j - u_j \rvert \,
    \indic \{ \lvert \vecw - \vecu \rvert_\infty > \eps \}
  {\d}\mu_{\vecm,\vecu}(\vecw).
\end{multline}
For the second term on the right-hand side, we used the fact that $0 \le \dot{C}_j \le 1$. 
We now bound the two integrals separately.

\begin{itemize}
\item
Recall that $\vecw(t) = \vecu + t ( \vecw - \vecu )$, a convex combination of $\vecu$ and $\vecw$.
The Lipschitz assumption on $\dot{C}_j$ implies that there exists a constant $L > 0$ such that
\[
  \lvert \dot{C}_j(\vecw(t)) - \dot{C}_j(\vecu) \rvert
  \le tL \sum_{k=1}^d \lvert w_k - u_k \rvert.
\]
We find that the first term on the right-hand side of \eqref{eq:integral_decomp} is bounded by
\[
  tL
  \sum_{k=1}^d
  \int_{[0, 1]^d}
    m_* \, 
    \lvert w_j - u_j \rvert \,
    \lvert w_k - u_k \rvert \,
  {\d}\mu_{\vecm,\vecu}(\vecw)
  \le
  \frac{1}{4} tLd.
\]
Here we applied the Cauchy--Schwarz inequality, by which
\begin{align*}
  \int_{[0, 1]^d}
    m_* \, 
    \lvert w_j - u_j \rvert \,
    \lvert w_k - u_k \rvert \,
  {\d}\mu_{\vecm,\vecu}(\vecw) 
  &=
  \E ( m^* \lvert S_j/m_j - u_j \rvert \, \lvert S_k/m_k - u_k \rvert ) \\
  &\le
  m^* \left[ \E\{ (S_j/m_j - u_j)^2 \} \E\{ (S_k/m_k - u_k)^2 \} \right]^{1/2} \\
  &\le
  m^* \left\{ m_j^{-1} u_j(1-u_j) \, m_k^{-1} u_k(1-u_k) \right\}^{1/2} \\
  &\le
  \left\{ u_j(1-u_j) \, u_k(1-u_k) \right\}^{1/2} \le \frac{1}{4}.
\end{align*}

\item
For the second term on the right-hand side of \eqref{eq:integral_decomp}, we can bound the indicator function in the integrand by
\[
  \indic \{ \lvert \vecw - \vecu \rvert_\infty > \eps \}
  \le
  \frac{1}{\eps} \lvert \vecw - \vecu \rvert_\infty
  \le
  \frac{1}{\eps} \sum_{k=1}^d \lvert w_k - u_k \rvert.
\]
The resulting integrals have already been bounded above in the previous term.
\end{itemize}
This finishes the proof of Proposition~\ref{lem:bias-term3}.
\end{proof}

\subsection{Empirical Bernstein and beta copula processes}
\label{subsec:thm}

By Lemma~\ref{lem:betaBernstein}, the empirical beta copula $\bbC_n^\beta$ is equal to the empirical Bernstein copula $B_{\vecm}(\bbC_n)$ with $m_j=n$ for all $j \in \{1, \ldots, d\}$. Propositions~\ref{lem:stochastic-term}, \ref{lem:bias-term2} and \ref{lem:bias-term3} then immediately imply the following theorem and corollary. Recall the empirical copula process $\bbG_n = \sqrt{n} ( \bbC_n - C ) $, the empirical Bernstein copula process $\bbG_{n, \vecm} = \sqrt{n} \{ B_{\vecm}( \bbC_n ) - C \}$, and the empirical beta copula process $\bbG_n^\beta = \sqrt{n} ( \bbC_n^\beta - C )$. 

In Theorem~\ref{thm:empBernstein}, we assume weak convergence of $\bbG_n$ to some limit process $\bbG$. See the discussion before Proposition~\ref{lem:stochastic-term} for a justification of this condition and for the form of the limit process. In particular, the condition is automatically satisfied in the iid case when $C$ satisfies Condition~\ref{cond:S}, and then the limit process is given by $\bbG^C$ in \eqref{eq:empCopProcLimit}.

\begin{Thm}
\label{thm:empBernstein}
Suppose that $C$ satisfies Condition~\ref{cond:S} and that $\bbG_n\rightsquigarrow\bbG$ in $\ell^\infty([0,1]^d)$ as $n\tendsto\infty$, the limiting process $\bbG$ having continuous trajectories almost surely. Let $\vecm = \vecm(n)$ be multi-indices such that $m_* = \min(m_1, \ldots, m_d) \to \infty$ as $n \to \infty$.

\begin{enumerate}[(i)]
\item
If $\sqrt{n} = o(m_*)$, then, for all $\vecu \in (0, 1]^d$ that satisfy the condition in Proposition~\ref{lem:bias-term3},
we have
\[
  \bbG_{n, \vecm}(\vecu) = \bbG_n( \vecu ) + o_P(1), \qquad n \to \infty.
\]

\item
If $\liminf_{n \to \infty} m_*/n > 0$, then, in $\ell^\infty([0, 1]^d)$,
\[
  \bbG_{n, \vecm} = \bbG_n + o_P(1) \rightsquigarrow \bbG, \qquad n \to \infty.
\]
\end{enumerate}
\end{Thm}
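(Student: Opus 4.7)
The natural strategy is to start from the decomposition \eqref{eq:empirical-decomp}, which writes $\bbG_{n,\vecm}$ as a sum of a stochastic term and a bias term, and then to apply the three preceding propositions to each piece. In both parts~(i) and~(ii), Proposition~\ref{lem:stochastic-term} takes care of the stochastic term in one shot: it provides a uniform (in $\vecu$) $o_P(1)$ bound on
\[
  \int_{[0,1]^d}\bbG_n(\vecw)\,{\d}\mu_{\vecm,\vecu}(\vecw) - \bbG_n(\vecu),
\]
whose hypotheses (weak convergence of $\bbG_n$ and continuity of $\bbG$) are exactly those assumed in the theorem. So everything reduces to showing that the deterministic bias term $\sqrt{n}\{B_{\vecm}(C)(\vecu) - C(\vecu)\}$ is negligible under the respective growth assumptions on $m_*$.

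For part~(i), the plan is to invoke Proposition~\ref{lem:bias-term3}, which gives the pointwise rate $\lvert B_{\vecm}(C)(\vecu) - C(\vecu) \rvert = O(1/m_*)$ under the stated Lipschitz/interior conditions on $\vecu$. Multiplying by $\sqrt{n}$ and using $\sqrt{n} = o(m_*)$ yields $\sqrt{n}\lvert B_{\vecm}(C)(\vecu) - C(\vecu) \rvert = O(\sqrt{n}/m_*) = o(1)$. Adding this to the uniform stochastic bound evaluated at the same $\vecu$ produces the claimed pointwise identity $\bbG_{n,\vecm}(\vecu) = \bbG_n(\vecu) + o_P(1)$.

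For part~(ii), I would use Proposition~\ref{lem:bias-term2} instead, which gives the uniform bound $\lVert B_{\vecm}(C) - C \rVert_\infty = o(1/\sqrt{m_*})$ under Condition~\ref{cond:S} alone. Then
\[
  \sqrt{n}\, \lVert B_{\vecm}(C) - C \rVert_\infty
  = \sqrt{n/m_*} \cdot \sqrt{m_*}\, \lVert B_{\vecm}(C) - C \rVert_\infty,
\]
and since $\liminf_{n \to \infty} m_*/n > 0$ the first factor stays bounded while the second is $o(1)$, so the bias term vanishes uniformly. Combining with Proposition~\ref{lem:stochastic-term} gives $\lVert \bbG_{n,\vecm} - \bbG_n \rVert_\infty = o_P(1)$ in $\ell^\infty([0,1]^d)$. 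The weak convergence $\bbG_{n,\vecm} \rightsquigarrow \bbG$ then follows from $\bbG_n \rightsquigarrow \bbG$ and Slutsky's lemma (i.e., the continuous mapping/convergence-together principle in $\ell^\infty$).

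Really there is no hard step left at this stage: all the substantive analysis has been carried out in Propositions~\ref{lem:stochastic-term}, \ref{lem:bias-term2} and \ref{lem:bias-term3}, and the theorem is an assembly of these three pieces through \eqref{eq:empirical-decomp}. The only thing to watch out for is that in part~(i) one needs the \emph{pointwise} bias bound of Proposition~\ref{lem:bias-term3} (which is the sharper $O(1/m_*)$ rate but requires a local Lipschitz hypothesis on the $\dot{C}_j$), whereas part~(ii) can get away with the weaker \emph{uniform} $o(1/\sqrt{m_*})$ bound of Proposition~\ref{lem:bias-term2} precisely because the available budget $\sqrt{n/m_*}$ is itself only $O(1)$ rather than $o(1)$.
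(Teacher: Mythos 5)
Your proposal is correct and follows exactly the paper's own argument: decompose via \eqref{eq:empirical-decomp}, handle the stochastic term uniformly with Proposition~\ref{lem:stochastic-term}, and kill the bias term pointwise via Proposition~\ref{lem:bias-term3} in part~(i) and uniformly via Proposition~\ref{lem:bias-term2} in part~(ii), using the respective growth conditions on $m_*$. Nothing is missing.
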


\begin{proof}{Proof}
Recall the decomposition of $\bbG_{n, \vecm}(\vecu)$ in \eqref{eq:empirical-decomp}. The stochastic term is equal to $\bbG_n(\vecu) + o_P(1)$, uniformly in $\vecu \in [0, 1]^d$, by Proposition~\ref{lem:stochastic-term}.

(i) Since $\limsup_{n \to \infty} \sqrt{n} / m_* = 0$, the bias term converges to zero at the given point $\vecu$.

(ii) Since $\limsup_{n \to \infty} \sqrt{n} / \sqrt{m_*} < \infty$, the bias term converges to zero uniformly in $\vecu \in [0, 1]^d$ thanks to Proposition~\ref{lem:bias-term2}.
\end{proof}

Setting $\vecm(n) = (n, \ldots, n)$ in Theorem~\ref{thm:empBernstein}(ii) yields the following corollary.

\begin{Cor}
\label{cor:empBeta}
Suppose that $C$ satisfies Condition~\ref{cond:S} and that $\bbG_n\rightsquigarrow\bbG$ in $\ell^\infty([0,1]^d)$ as $n\tendsto\infty$, the limiting process $\bbG$ having continuous trajectories almost surely. Then, in $\ell^\infty([0, 1]^d)$,
\[
  \bbG_n^\beta = \bbG_n + o_P(1) \rightsquigarrow \bbG, \qquad n \to \infty.
\]
\end{Cor}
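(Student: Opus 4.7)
The proof is a direct specialization of Theorem~\ref{thm:empBernstein}(ii) to the multi-index $\vecm(n) = (n, \ldots, n)$. The plan is to verify the three hypotheses of part~(ii) in this particular case and then identify the resulting empirical Bernstein copula process with the empirical beta copula process via Lemma~\ref{lem:betaBernstein}.

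First, I would observe that the assumptions on $C$ and on the weak convergence $\bbG_n \rightsquigarrow \bbG$ in the corollary are literally the same as those in Theorem~\ref{thm:empBernstein}, so nothing is to be checked there. Next, with $\vecm(n)=(n,\ldots,n)$ we have $m_* = \min(n,\ldots,n) = n$, hence $m_* \to \infty$ and moreover $m_*/n \equiv 1$, so $\liminf_{n \to \infty} m_*/n = 1 > 0$. Therefore the hypotheses of Theorem~\ref{thm:empBernstein}(ii) are satisfied, and the theorem yields
\[
  \bbG_{n,(n,\ldots,n)} = \bbG_n + o_P(1) \rightsquigarrow \bbG
  \qquad \text{in } \ell^\infty([0,1]^d), \ n \to \infty.
\]

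Finally, I would invoke Lemma~\ref{lem:betaBernstein}, which gives $B_{(n,\ldots,n)}(\bbC_n) = \bbC_n^\beta$, and therefore $\bbG_{n,(n,\ldots,n)} = \sqrt{n}\{B_{(n,\ldots,n)}(\bbC_n) - C\} = \sqrt{n}(\bbC_n^\beta - C) = \bbG_n^\beta$. Substituting this identity into the above display completes the proof.

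There is no real obstacle here; all the work has already been done in Propositions~\ref{lem:stochastic-term} and~\ref{lem:bias-term2} and packaged in Theorem~\ref{thm:empBernstein}(ii). The only thing to note is that the choice $m_j = n$ sits exactly on the boundary of the range allowed by part~(ii) (rather than part~(i)), which is why we use the uniform bias bound from Proposition~\ref{lem:bias-term2} (requiring only Condition~\ref{cond:S}) and obtain weak convergence in $\ell^\infty([0,1]^d)$ rather than a merely pointwise statement.
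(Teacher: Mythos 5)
Your proof is correct and is exactly the paper's argument: the paper obtains the corollary by setting $\vecm(n) = (n,\ldots,n)$ in Theorem~\ref{thm:empBernstein}(ii), noting $m_* = n$ so that $\liminf_{n\to\infty} m_*/n = 1 > 0$, and identifying $\bbG_{n,(n,\ldots,n)}$ with $\bbG_n^\beta$ via Lemma~\ref{lem:betaBernstein}. Your additional remark about the choice $m_j = n$ lying in the regime covered by the uniform bias bound of Proposition~\ref{lem:bias-term2} is accurate and consistent with the paper's discussion.
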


The conclusion of this section is that under reasonable conditions, the empirical Bernstein and empirical beta copulas have the same large-sample distribution as the empirical copula. The real advantage of the use of the smoothed versions is visible mainly for small samples. The difference can perhaps be quantified via higher-order asymptotic theory. Instead, we assess the finite-sample performance through Monte Carlo simulations.

\section{Finite-sample performance}
\label{sec:simul}

In this section, we compare the finite-sample performance of the empirical beta copula with those of various other estimators by a Monte Carlo experiment. The simulations were performed in \textsf{R} \cite{R} using the package \textsf{copula} \cite{copulaR}.

We show the results for five copulas, three bivariate ones and two trivariate ones:
\begin{itemize}
\setlength{\itemsep}{0pt}
\item the bivariate Farlie--Gumbel--Morgenstern (FGM) copula with negative dependence ($\theta = -1$);
\item the bivariate independence copula;
\item the bivariate Gaussian copula with positive dependence ($\rho = 0.5$);
\item the trivariate $t$ copula with degrees-of-freedom parameter equal to $\nu = 4$ and pairwise correlation parameters equal to $\rho_{12} = -0.2$, $\rho_{13} = 0.5$ and $\rho_{23} = 0.4$;
\item the trivariate nested Archimedean copula with Frank generators and with Kendall's tau equal to $0.3$ at the upper node and $0.6$ at the lower node.
\end{itemize}
See, e.g., Joe~\cite{Joe1997} and Nelsen~\cite{Nelsen2006} for the explicit functional forms and properties of these copulas. We have run simulations for many other copula models, but the results were qualitatively the same as the ones showed here.

We compared the following five estimators:
\begin{itemize}
\item the empirical copula $\bbC_n$ in \eqref{eq:empirical-copula};
\item the empirical checkerboard copula $\bbC_n^\sharp$ in \eqref{eq:emp-checker-copula} below;
\item the empirical beta copula $\bbC_n^\beta$ in \eqref{eq:betacopula};
\item the empirical Bernstein copula $B_{\bm{m}}( \bbC_n )$ with $\bm{m} = (m, \ldots, m)$ and $m = \lceil n/3 \rceil$, the smallest integer not smaller than $n/3$;
\item the empirical Bernstein copula $B_{\bm{m}}( \bbC_n )$ with $\bm{m} = (m, \ldots, m)$ and $m$ as recommended by Janssen et al.\ \cite{Jan-Swan-Ver2012}, see \eqref{eq:JSV} below.
\end{itemize}

For each possible combination of estimator and model and for sample sizes $n$ from $20$ up to $100$, we computed the integrated (over the unit cube) squared bias, the integrated variance, and the integrated mean squared error by Monte Carlo simulation. See Appendix~\ref{app:estim} for the definitions of the three performance measures and a description of our computation method.

\subsection{Comparison with the empirical copula and its variant}
\label{subsec:comparison-emp}

As the simplest version of smoothed empirical copula, we introduce the \textit{empirical checkerboard copula} defined by 
\begin{equation}
  \label{eq:emp-checker-copula}
  \mathbb{C}_n^{\#}(\bm{u}) := \frac{1}{n} \sum_{i=1}^n \prod_{j=1}^d 
  \min \{ \max ( nu_j  - R_{i,j}^{(n)} + 1, 0 ), 1 \}
\end{equation}
(Li et al.~\cite{LMST97} for $d=2$, and Carley and Taylor~\cite{Carley-Taylor2002} for general $d$; see also Genest and Ne\v{s}lehov\'{a}~\cite{Gen-Nes2007} and Genest et al.~\cite{Gen-Nes-Rem2014}). We note that this is a genuine copula, just like the empirical beta copula. 

In each of the graphs in Figure~\ref{fig:emp-checker-beta}, the horizontal axis indicates the sample size; the vertical axis indicates the integrated squared bias, the integrated variance and the integrated mean squared error for the left, middle and right panels, respectively (the same comment also applies to the other figures).

From the figures, one sees that the empirical copula performs worst with respect to all three measures.  The empirical beta copula and the empirical checkerboard copula are comparable in terms of bias, but the variance of the former is smaller than the one of the latter, resulting in a smaller mean squared error for the empirical beta copula in all cases considered. Note that the empirical checkerboard copula can be thought of as smoothing at bandwidth $O(1/n)$, whereas the empirical beta copula involves smoothing at bandwidth $O(1/\sqrt{n})$. 

It is also interesting to see on which parts of the unit cube the empirical beta copula 
performs better than the empirical copula.  The localized relative efficiency on a set $B$ 
of the empirical copula $\bbC_n$ with respect to the empirical beta copula $\bbC_n^\beta$ is 
defined as
\begin{equation}
\label{eq:LRE}
  \operatorname{LRE}( \bbC_n, \bbC_n^\beta; B )
  =
  100 \% \times \frac{\operatorname{LIMSE}( \bbC_n^\beta; B )}{\operatorname{LIMSE}( \bbC_n; B )},
\end{equation}
where $\operatorname{LIMSE}( \widehat{C}_n; B )$, the localized integrated mean squared error of an estimator $\widehat{C}_n$ on $B$, is defined in Appendix~\ref{app:estim}; it is basically the mean squared error averaged over $B$.  Thus, the smaller the value of $\operatorname{LRE}(\bbC_n,\bbC_n^\beta;B)$, the better the performance of the empirical beta copula in comparison to the empirical copula on $B$.

Figure~\ref{fig:LRE} shows heat maps of the above localized relative efficiencies in case $d = 2$ for square cells $B$ of the form $[(j-1)/10, j/10] \times [(k-1)/10, k/10]$, $(j,k) \in \{1,\ldots,10\}^2$, at sample size $n = 100$ and when the true copula $C$ is equal to the independence copula, the Farlie--Gumbel--Morgenstern copula with $\theta = -1$, the Gaussian copula with correlation parameter $\tau = 0.5$, and the Gumbel copula with Kendall's tau equal to $\tau = 0.5$.
The results vary from copula to copula, but on the whole one could argue that 
the efficiency improvements are largest near the upper and right borders of the unit square.  
We note that it is on these parts of the unit square that the effect of $\bbC_n^\beta$ being
a genuine copula should be prominently visible.

\subsection{Comparison with the empirical Bernstein copulas}
\label{subsec:comparison-emp-Bernstein}

In this subsection, we consider the empirical Bernstein copula with all the orders $m_1,\ldots,m_d$ equal to an integer $m$.  Our first choice for the order of Bernstein polynomial is $m = \lceil n/3 \rceil$, where $\lceil x \rceil$ denotes the smallest integer not smaller than $x$.  With this choice, we see the consequence of Proposition~\ref{prop:Bernstein-genuine-copula} because $m$ is a divisor of $n$ only when $n$ is a multiple of 3.  

Our second choice for $m$ is based on Janssen et al.~\cite{Jan-Swan-Ver2012}, who recommend, in the bivariate case, the following choice for $m$:
\begin{equation}
\label{eq:JSV}
  m_0(u_1, u_2) = \left\{ \frac{4 b^2(u_1, u_2)}{V(u_1, u_2)} \right\}^{2/3} n^{2/3},
\end{equation}
(or the integer part thereof), where
\begin{align*}
  b(u_1, u_2) &= 
  \frac{1}{2} \sum_{j=1}^2 u_j(1-u_j) \, \ddot{C}_{jj}(u_1, u_2), \\
  V(u_1, u_2) &= 
  \sum_{j=1}^2 \dot{C}_j(u_1, u_2) \bigl\{ 1 - \dot{C}_j(u_1, u_2) \bigr\} \left\{ \frac{u_j(1-u_j)}{\pi} \right\}^{1/2}
\end{align*}
and with $\dot{C}_j$ and $\ddot{C}_{jj}$ the first- and second-order partial derivatives of $C$ with respect to $u_j$, for $j \in \{1, 2\}$.  The following remarks on this choice $m_0$ are in order.
\begin{itemize}
\item This choice requires the knowledge of $C$ through the derivatives $\dot{C}_j$ and $\ddot{C}_{jj}$. To estimate these derivatives in small samples is not easy.
\item For the independence copula, we have $\ddot{C}_{jj} \equiv 0$, so that it is not clear how to define $m_0$.
\item In Lemma~3(i) in \cite{Jan-Swan-Ver2012}, an $o_P(1)$ term is written but subsequently neglected. However, the order of magnitude of this term could be larger than the remainder terms that are analyzed later on.  
\end{itemize}
In the simulations, we take the true (but in practice unknown) value for $m_0(u_1, u_2)$ for the bivariate FGM and Gaussian copulas.

From Figure~\ref{fig:beta-bern3-bernJSV}, it is observed that the performance of the empirical Bernstein copulas are strongly affected by the periodicity due to the divisibility relation between $n$ and $m$, especially in terms of bias.  Namely, when $n$ is a multiple of 3, the integrated bias and the integrated mean squared error of the empirical Bernstein copula $B_{\bm{m}}( \bbC_n )$ with $m = \lceil n/3 \rceil$ are much smaller than the other cases.  These numerical results strongly suggest that this is because $B_{\bm{m}}( \bbC_n )$ with $m = \lceil n/3 \rceil$ is a genuine copula only when $n$ is a multiple of 3.  This might be partially verified by the simulation results in Section 4.1 although the target for comparison is the empirical copula rather than the empirical Bernstein copula; What they have in common is the larger integrated bias and integrated mean squared error in the case where they are not a copula.

Compared to the empirical Bernstein copulas, the empirical beta copula has a smaller bias and a larger variance in every case.  There is no clear ordering of the three in terms of the mean squared error.  It is interesting to note that while the mean squared error of the empirical beta copula is smaller in positively dependent cases, it is larger in negatively dependent cases. At this point, we have no explanation for this phenomenon.

\section{Concluding Remarks}
\label{sec:concl}

The empirical beta copula can be considered as a simple but effective way of correcting and smoothing the empirical copula. No smoothing parameter needs to be chosen.  The empirical beta copula is a special case of the empirical Bernstein copula, but in contrast to the latter, it is always a genuine copula. Moreover, it is extremely simple to simulate samples from it. The asymptotic distribution of the empirical beta copula is the same as that of the empirical copula, but in small samples, it performs better both in terms of bias and variance.  Moreover, there seems little to be gained from using Bernstein smoothers at other polynomial degrees $m$, except in special cases such as the independence copula. 

We also note that the only properties of the kernels $\mu_{\vecm, \vecu}$ that we needed in the asymptotic analysis were moment properties and concentration inequalities of their margins around the points $u_j$. 
This implies that the approach may perhaps be extended to products of other families of smoothing kernels than beta kernels.

Our smoothing procedure might also have a beneficial effect on the accuracy of resampling schemes for the empirical copula process (B\"{u}cher and Dette~\cite{Buech-Dette2010}).  More specifically, testing procedures based on the empirical copula typically rely on the bootstrap for the computation of the critical values of the test statistic.  For finite samples, the accuracy is often not very good: the true type I error of the test may differ greatly from the nominal one.  Then, the question is how to construct a bootstrap or multiplier resampling scheme for the empirical beta and Bernstein copulas.  This is a topic for future research, together with the question of higher-order asymptotics for the various nonparametric copula estimators.

\subsection*{Acknowledgements}

J. Segers gratefully acknowledges funding by contract ``Projet d'Act\-ions de Re\-cher\-che Concert\'ees'' No.\ 12/17-045 of the ``Communaut\'e fran\c{c}aise de Belgique'', by IAP research network Grant P7/06 of the Belgian government (Belgian Science Policy), and by the ``Projet de
Recherche'' No.\ FRFC PDR T.0146.14 of the ``Fonds de la Recherche Scientifique -- FNRS'' (Belgium).
H. Tsukahara is supported by JSPS KAKENHI Grant Number 15H03337.  The authors are also grateful
to two anonymous reviewers for their careful reading of the manuscript and their helpful
comments.

\appendix

\section{Appendix}

We derive a convenient expression for the derivative of the Bernstein-type polynomial (making no claim of originality). 
Put $p'_{n,\,r}(t):=(\d/\d t)p_{n,\,r}(t)$.  Then we have
\begin{align*}
  p'_{n,\,r}(t)&=\binom{n}{r}\bigl\{rt^{r-1}(1-t)^{n-r}-(n-r)t^r(1-t)^{n-r-1}\bigr\}\\[5pt]
              &=\begin{cases}
                -n(1-t)^{n-1} & \text{if $r=0$}\\[5pt]
                \displaystyle\frac{n!}{(r-1)!(n-r)!}t^{r-1}(1-t)^{n-r}
                -\frac{n!}{r!(n-r-1)!}t^r(1-t)^{n-r-1} & \text{if $r=1,\ldots,n-1$}\\[10pt]
                nt^{n-1} & \text{if $r=n$}
                \end{cases}
\end{align*}
Using the above expression and summation by parts, it is easy to see that 
for any sequence $a_r$, $r=0,\ldots,n$, the following identity holds:
\begin{equation}
  \label{eq:bin-derivative}
  \sum_{r=0}^na_rp'_{n,\,r}(t)=\sum_{r=1}^{n}(a_{r}-a_{r-1})np_{n-1,\,r-1}(t)
\end{equation}


When a function is differentiable, we can give a condition for $d$-increasing property 
in terms of its partial derivative.

\begin{Prop}
\label{prop:d-increasing-partial-derivative}
Suppose that a function $f\colon [0,1]^d\tendsto [0,1]$ is infinitely differentiable 
on $(0,1)^d$.  Then $f$ is $d$-increasing if and only if 
$\partial_{1,\ldots,d}f(\vecu)\geq 0$ for all $\vecu\in (0,1)^d$.  
\end{Prop}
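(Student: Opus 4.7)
The plan is to prove the two implications separately, with the main workhorse being the classical identity that relates the $d$-fold alternating sum over the vertices of a box to the iterated integral of the mixed partial derivative $\partial_{1,\ldots,d} f$.

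For the "if" direction, suppose $\partial_{1,\ldots,d} f(\vecu) \ge 0$ for all $\vecu \in (0,1)^d$. Given $\vecu, \vecv \in [0,1]^d$ with $u_j < v_j$ for all $j$, I first assume $0 < u_j < v_j < 1$. The plan is to establish by induction on $d$ the formula
\[
  \sum_{\vecw} (-1)^{\#\{j\colon w_j = u_j\}} f(\vecw)
  = \int_{u_1}^{v_1} \!\! \cdots \! \int_{u_d}^{v_d} \partial_{1,\ldots,d} f(\vecw) \, {\d}w_d \cdots {\d}w_1.
\]
The base case $d = 1$ is the fundamental theorem of calculus. For the induction step, split the alternating sum according to $w_d \in \{u_d, v_d\}$, apply the inductive hypothesis to the $(d-1)$-dimensional sum for each fixed $w_d$, and then use FTC in the $d$-th coordinate together with the fact that differentiation under the integral sign is valid by the smoothness of $f$. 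Nonnegativity of the integrand then yields $d$-increasingness. The boundary case where some $u_j = 0$ or $v_j = 1$ follows by letting $\vecu$ and $\vecv$ approach the boundary from inside and appealing to the continuity of $f$ on $[0,1]^d$ (which, in our intended application to Bernstein polynomials, is automatic).

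For the "only if" direction, fix $\vecu \in (0,1)^d$ and choose $\eps > 0$ small enough that $\vecu + \eps \vece \in (0,1)^d$, where $\vece = (1,\ldots,1)$. Since $f$ is $d$-increasing on the box $[\vecu, \vecu + \eps \vece]$,
\[
  \frac{1}{\eps^d} \sum_{\vecw \in \{0,1\}^d} (-1)^{d - \#\{j : w_j = 1\}} f(\vecu + \eps \vecw) \ge 0.
\]
The plan is then to show that the left-hand side converges to $\partial_{1,\ldots,d} f(\vecu)$ as $\eps \downarrow 0$. This follows by iteratively applying the mean value theorem (or, equivalently, a multivariate Taylor expansion): peel off one variable at a time, writing each first-order difference $\eps^{-1}\{g(\cdot + \eps) - g(\cdot)\}$ as a partial derivative evaluated at an intermediate point, and use continuity of $\partial_{1,\ldots,d} f$ on $(0,1)^d$ to pass to the limit. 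The resulting inequality $\partial_{1,\ldots,d} f(\vecu) \ge 0$ holds at every $\vecu \in (0,1)^d$, finishing the proof.

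The main obstacle will be handling the iterated integration/differentiation cleanly; both directions ultimately rest on the same fact that the $d$-fold finite difference of a $C^d$ function equals an integral of its mixed partial, but one has to set up the induction carefully and, in the "if" direction, justify the passage from the open cube to the closed cube via continuity (trivial for polynomials, hence for the application to $B_{\vecm}(a)$).
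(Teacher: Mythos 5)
Your proposal is correct and follows essentially the same route as the paper: the ``if'' direction rests on the identity between the $d$-fold alternating sum over the vertices of a box and the iterated integral of $\partial_{1,\ldots,d}f$, and the ``only if'' direction recognizes that same normalized alternating sum as converging to $\partial_{1,\ldots,d}f(\vecu)$. The only difference is that you spell out the induction/mean-value details that the paper compresses into ``by standard calculus,'' and you explicitly flag the passage from the open to the closed cube via continuity, which the paper leaves implicit.
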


\bigskip\noindent
\textit{Proof}. Suppose that $f$ is $d$-increasing.  Then, since $f$ is infinitely 
differentiable on $(0,1)^d$, it follows that for $\vecu\in (0,1)^d$
and $h_j>0$, $j\in\{1,\ldots,d\}$, 
\[
  \partial_{1,\ldots,d}f(\vecu)=\lim_{\substack{h_j\tendsto +0\\ j=1,\ldots,d}}
  \biggl(\prod_{j=1}^d h_j\biggr)^{-1}\sum (-1)^{\#\{j\colon w_j=u_j\}}f(\vecw)\geq 0,
\]
where the sum is taken over all $\vecw$ such that $w_j=u_j$ or $u_j+h_j$ 
for all $j\in\{1,\ldots,d\}$.

Conversely, suppose that $\partial_{1,\ldots,d}f(\vecu)\geq 0$.  
By the standard calculus, for all $\vecu, \vecv\in [0,1]^d$ with $u_j<v_j$ for all 
$j\in\{1,\ldots,d\}$, we have 
\[
  \sum (-1)^{\#\{j\colon w_j=u_j\}}f(\vecw)
  =\int_{u_1}^{v_1}\cdots\int_{u_d}^{v_d}\partial_{1,\ldots,d}f(t_1,\ldots,t_d)\,{\d}t_1\dots{\d}t_d
    \geq 0,
\]
where the sum is taken over all $\vecw$ such that $w_j=u_j$ or $v_j$ for all $j\in\{1,\ldots,d\}$.
\qed

\section{Performance measures of copula estimators}
\label{app:estim}

Given a copula estimator $\hat{C}_n$, we consider in Section~\ref{sec:simul} 
the following three performance measures:
\begin{align*}
  \text{integrated squared bias:} \quad
    & \int_{[0, 1]^d} \Bigl[ \E\{ \hat{C}_n( \vecu ) - C( \vecu ) \} \Bigr]^2 \, \d \vecu; \\
  \text{integrated variance:} \quad
    & \int_{[0, 1]^d} \E\Bigl[ \bigl\{ \hat{C}_n( \vecu ) - \E( \hat{C}_n( \vecu) ) \bigr\}^2 \Bigr] \, \d \vecu; \\
  \text{integrated mean squared error:} \quad 
    & \int_{[0, 1]^d} \E\Bigl[ \bigl\{ \hat{C}_n( \vecu ) - C( \vecu) \bigr\}^2 \Bigr] \, \d \vecu.
\end{align*}
To compute these, we apply the following trick. Let $\hat{C}_n^{(1)}$ and $\hat{C}_n^{(2)}$ be independent replications of the estimator, and let the random vector $\vecV$ be independent of the two estimators and uniformly distributed on $[0, 1]^d$. Then each of the three performance measures above can be written as a single expectation with respect to $( \hat{C}_n^{(1)}, \hat{C}_n^{(2)}, \vecV )$:
\begin{align*}
  \text{integrated squared bias:} \quad 
    & \E \left[ \prod_{j=1}^2 \bigl\{ \hat{C}_n^{(j)}( \vecV ) - C( \vecV ) \bigr\} \right]; \\
  \text{integrated variance:} \quad 
    & \E \left[ \frac{1}{2} \bigl\{ \hat{C}_n^{(1)}( \vecV ) - \hat{C}_n^{(2)}( \vecV ) \bigr\}^2 \right]; \\
  \text{integrated mean squared error:} \quad 
    & \E \left[ \frac{1}{2} \sum_{j=1}^2 \bigl\{ \hat{C}_n^{(j)}( \vecV ) - C( \vecV ) \bigr\}^2 \right].
\end{align*}
To see these identities, first condition on $\vecV$ and then use the fact that $\hat{C}_n^{(1)}$ and $\hat{C}_n^{(2)}$ are independent of $\vecV$ and are independent random copies of $\hat{C}_n$.

To compute the above three expectations, we rely on Monte Carlo simulation. For a given large integer $L$, we simulate the independent random vectors
\[
  \vecU_{1,\ell}^{(1)}, \ldots, \vecU_{n,\ell}^{(1)}; \vecU_{1,\ell}^{(2)}, \ldots, \vecU_{n,\ell}^{(2)}; \vecV_\ell,
\]
where $\ell \in \{ 1, \ldots, L \}$. The random vectors $\vecU$ are sampled from $C$ and the random vectors $\vecV$ are sampled from the uniform distribution on $[0, 1]^d$. Then we compute the copula estimator under consideration based on the samples $\{\vecU_{i,\ell}^{(1)}\}_{i=1}^n$ and $\{\vecU_{i,\ell}^{(2)}\}_{i=1}^n$ and evaluated at $\vecV_\ell$, yielding $\hat{C}_{n,\ell}^{(1)}( \vecV_\ell )$ and $\hat{C}_{n,\ell}^{(2)}( \vecV_\ell )$, respectively. We also compute the true copula, $C$, at $\vecV_\ell$. Then we compute the desired function of $\hat{C}_{n,\ell}^{(1)}( \vecV_\ell )$, $\hat{C}_{n,\ell}^{(2)}( \vecV_\ell )$ and $C( \vecV_\ell )$ as in the three expectations above. Finally, we average over the $L$ samples to obtain a Monte Carlo estimate of the desired performance measure.

The plots in Figures~\ref{fig:emp-checker-beta} and~\ref{fig:beta-bern3-bernJSV} are based on $L = 20\,000$ replications.

\subsubsection*{Localized integrated mean squared error}
\label{app:lre}
For a copula estimator $\widehat{C}_n$, and a Borel set $B \subset [0, 1]^d$ with positive 
Lebesgue measure $\lvert B \rvert$, we define the \textit{localized integrated mean squared error}
of $\widehat{C}_n$ on $B$ by
\[
  \operatorname{LIMSE}( \widehat{C}_n; B )
  =
  \frac{1}{\lvert B \rvert} \int_B 
      \E \left[\bigl\lbrace \widehat{C}_n( \vecv ) - C( \vecv ) \bigr\rbrace^2  \right]
  \, \d \vecv 
  =
  \E \left[ \bigl\lbrace \widehat{C}_n( \vecV ) - C( \vecV ) \bigr\rbrace^2 \right],
\]
with $\vecV$ uniformly distributed on $B$ and independent of the random sample $\vecU_1, \ldots, \vecU_n$ underlying the copula estimator $\widehat{C}_n$.  It can be computed by Monte Carlo simulation and integration via
\begin{equation}
\label{eq:LIMSE-MC}
  \widehat{\operatorname{LIMSE}}_L( \widehat{C}_n; B )
  =
  \frac{1}{L} \sum_{\ell=1}^L \bigl\lbrace \widehat{C}_{n,\ell}( \vecV_\ell ) - C( \vecV_\ell ) \bigr\rbrace^2
\end{equation}
where $\hat{C}_{n,\ell}$ denotes the copula estimator based upon a random $n$-sample $\vecU_1^{(\ell)}, \ldots, \vecU_n^{(\ell)}$ from $C$ and where $\vecV_\ell$ is uniformly distributed on $B$, and all random vectors are independent.  Here we are concerned only with the integrated mean squared error, so we dispense with the computational trick above.  The plots in Figure~\ref{fig:LRE} are based on $L = 20\,000$ replications.

\clearpage

\begin{figure}
\begin{center}
\begin{tabular}{@{}c@{}c@{}c@{}}
\includegraphics[width=0.33\textwidth]{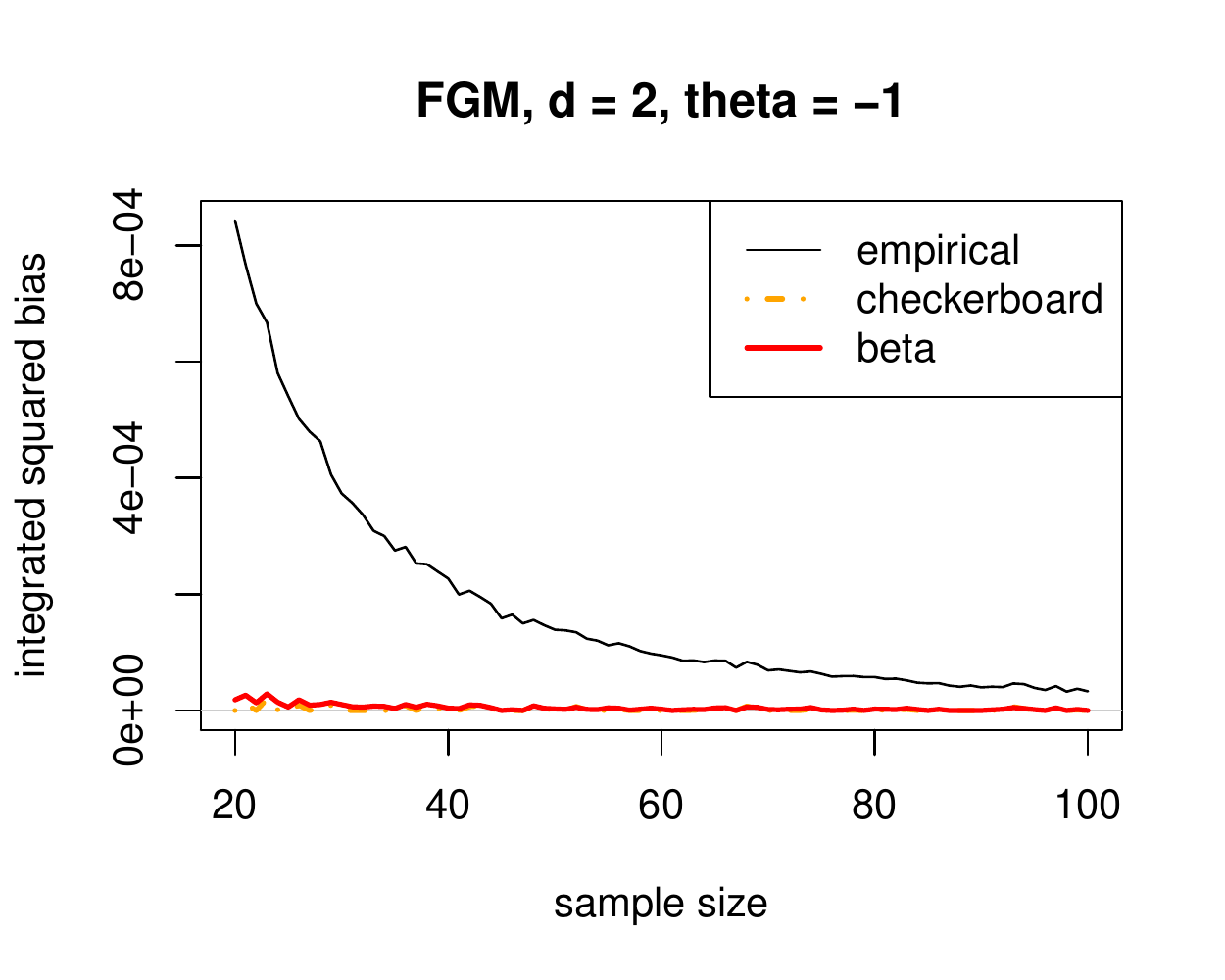}&
\includegraphics[width=0.33\textwidth]{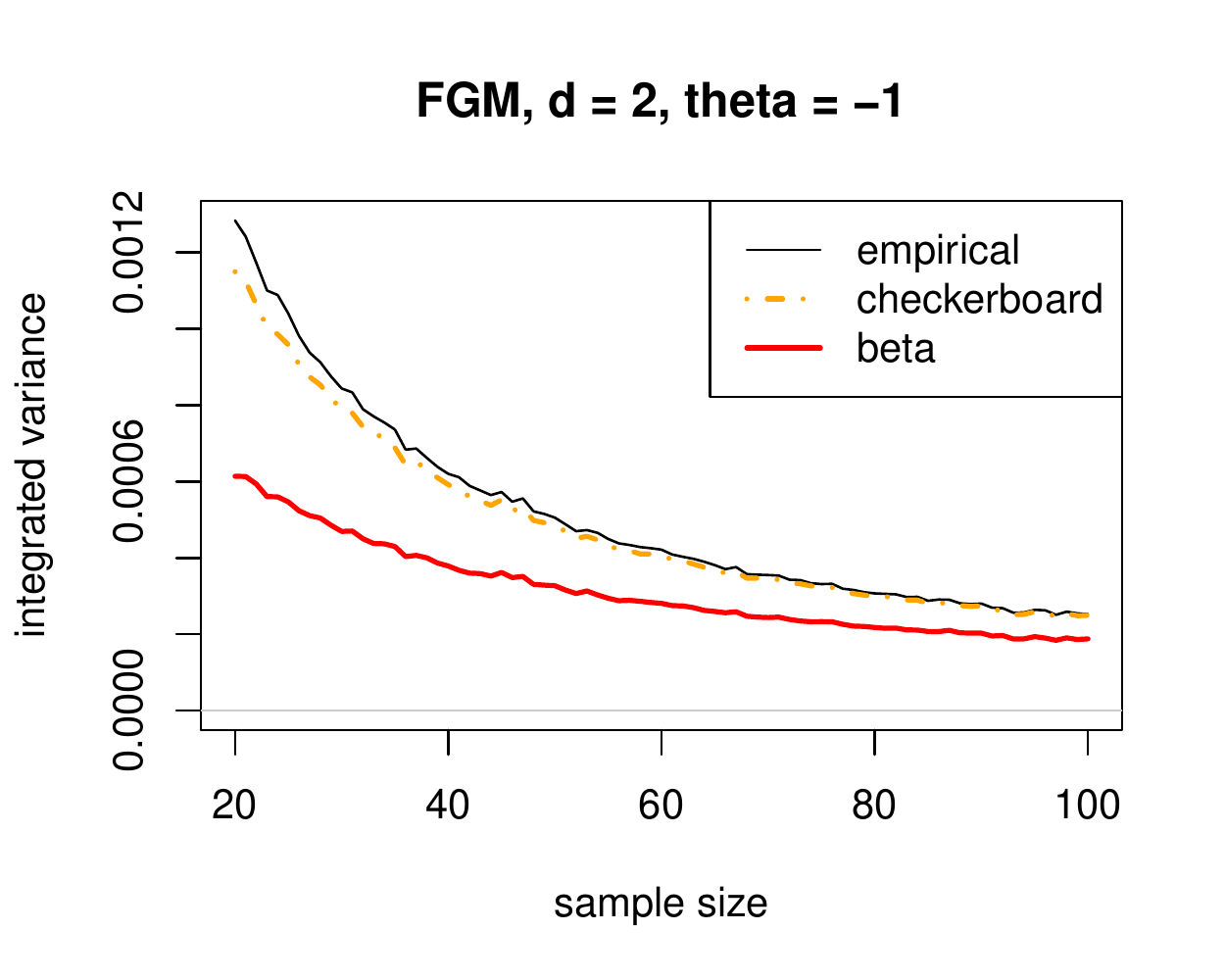}&
\includegraphics[width=0.33\textwidth]{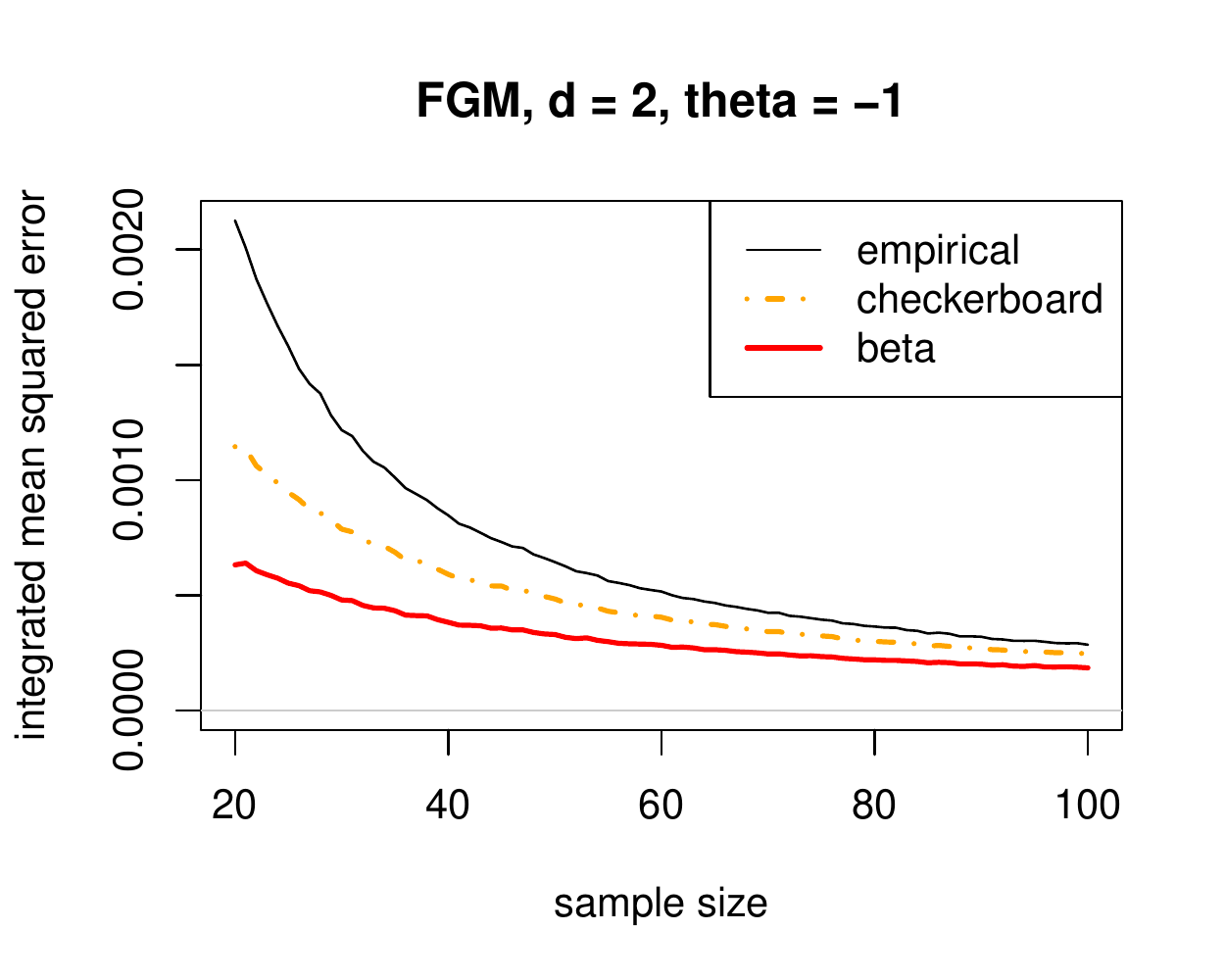}\\
\includegraphics[width=0.33\textwidth]{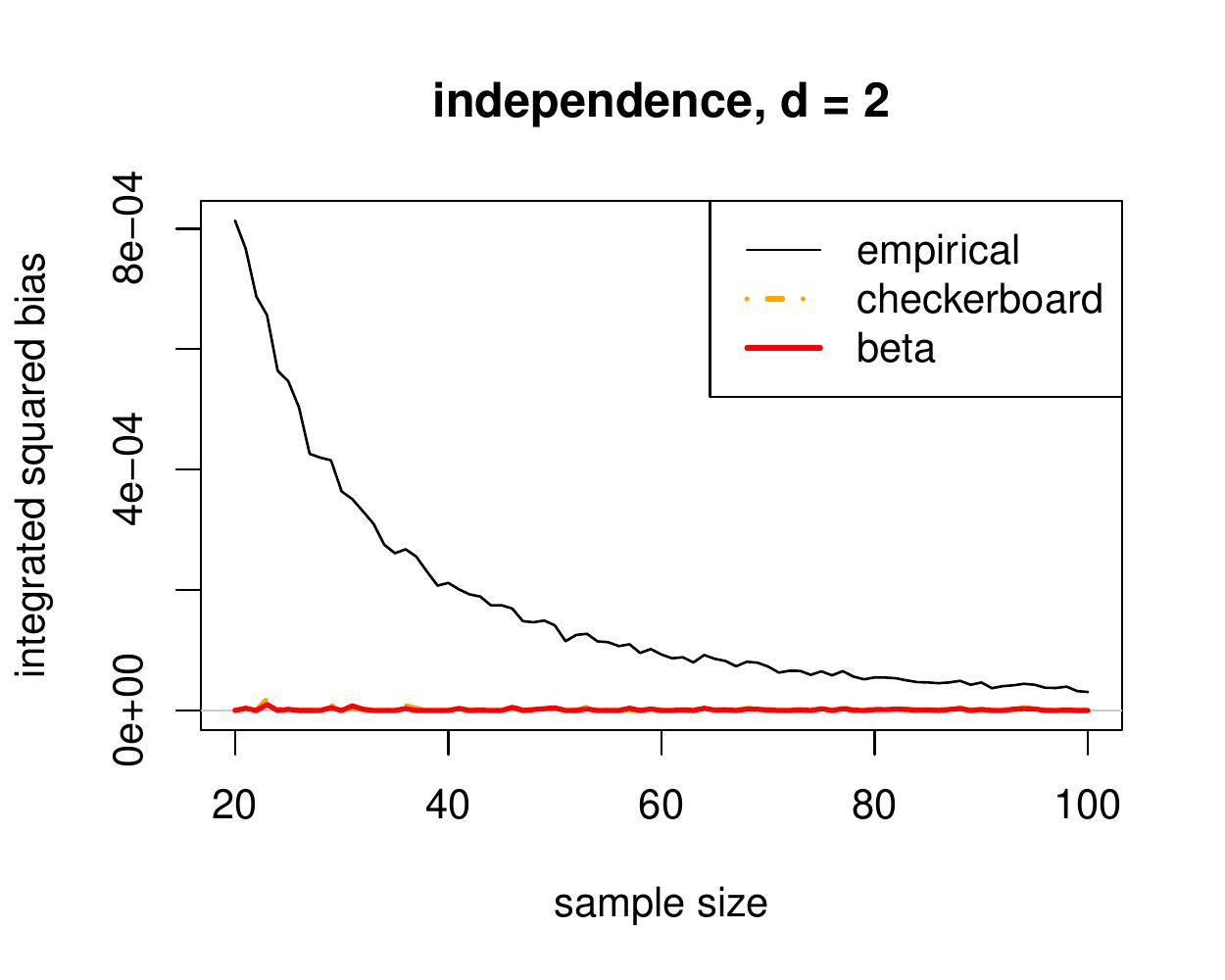}&
\includegraphics[width=0.33\textwidth]{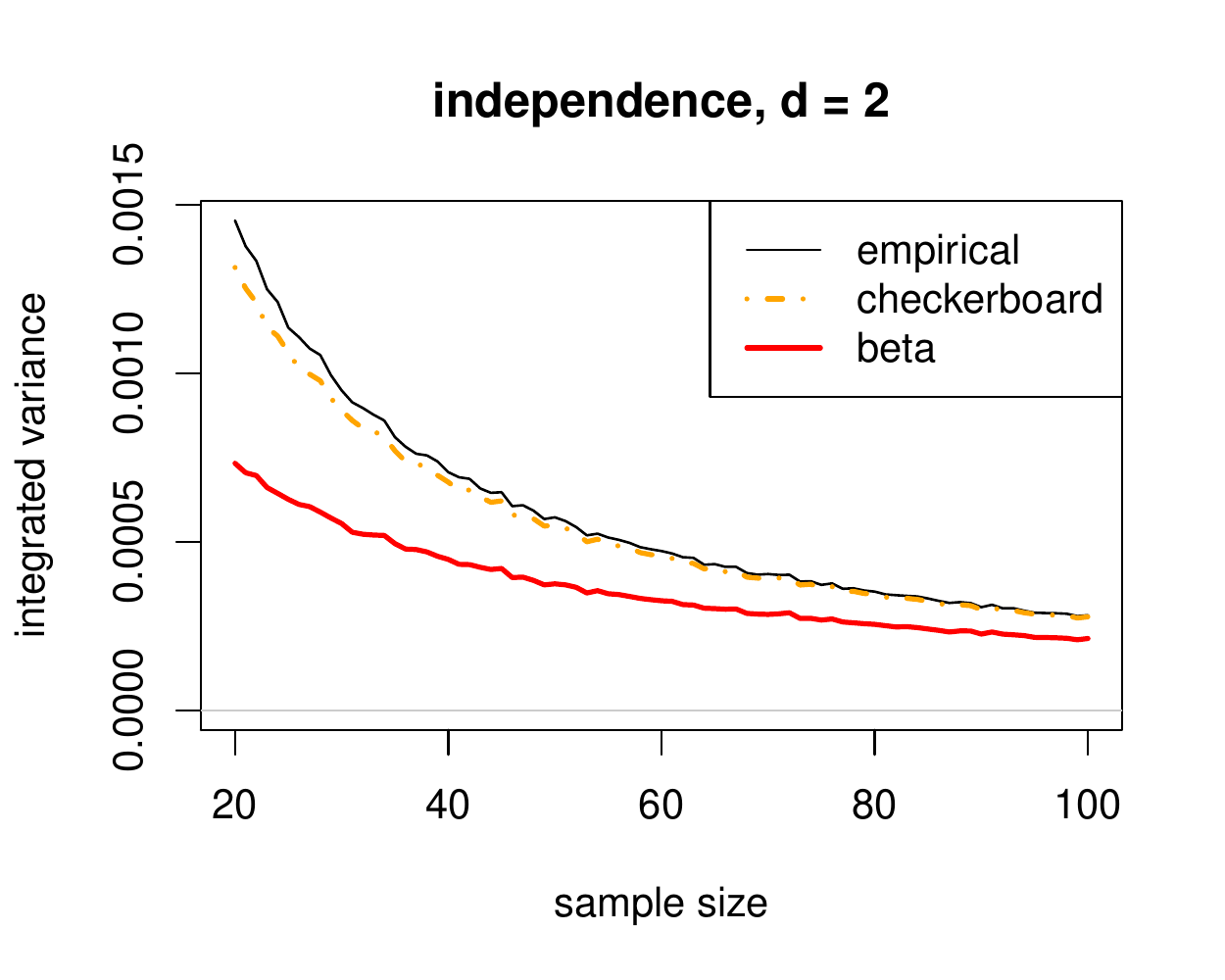}&
\includegraphics[width=0.33\textwidth]{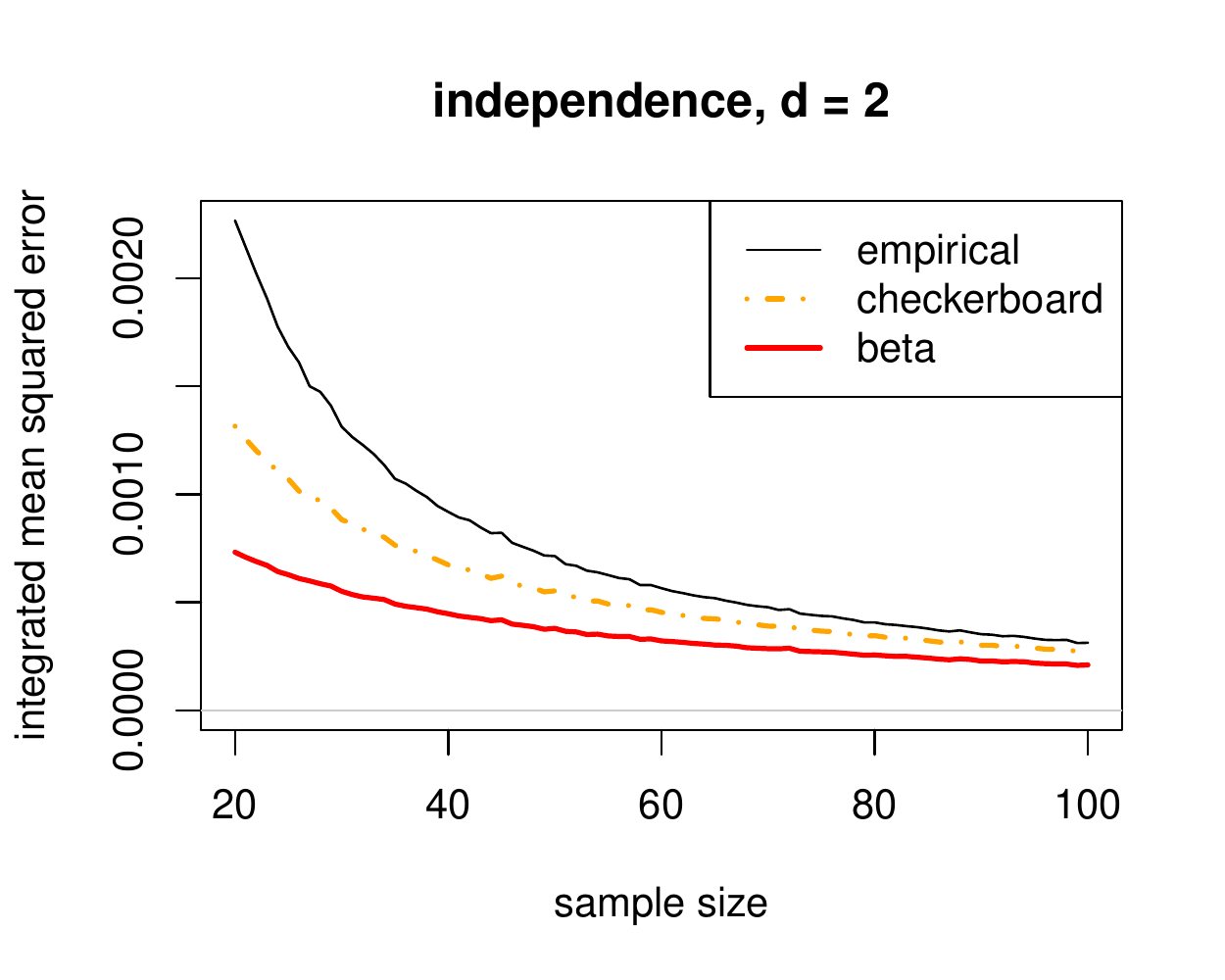}\\
\includegraphics[width=0.33\textwidth]{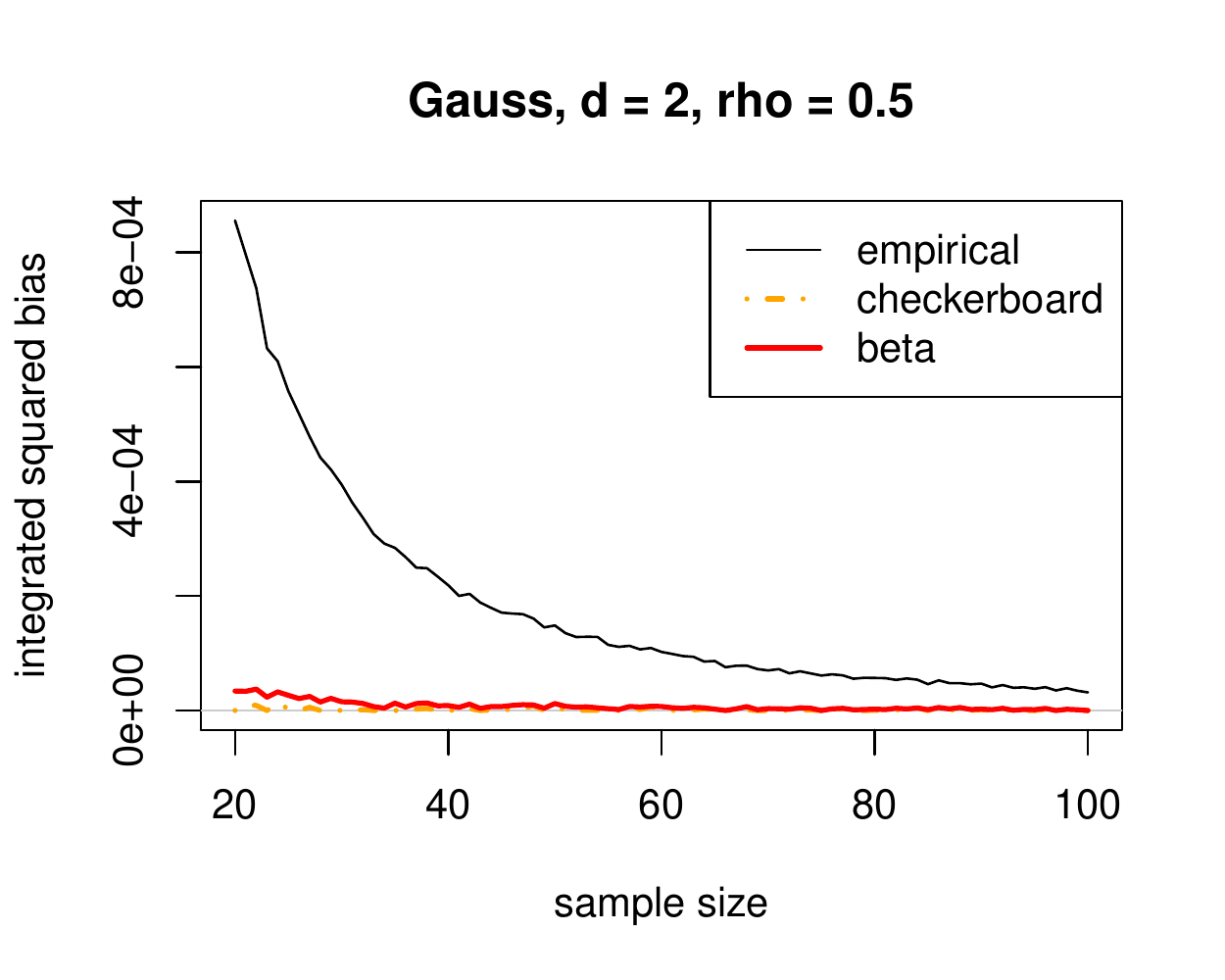}&
\includegraphics[width=0.33\textwidth]{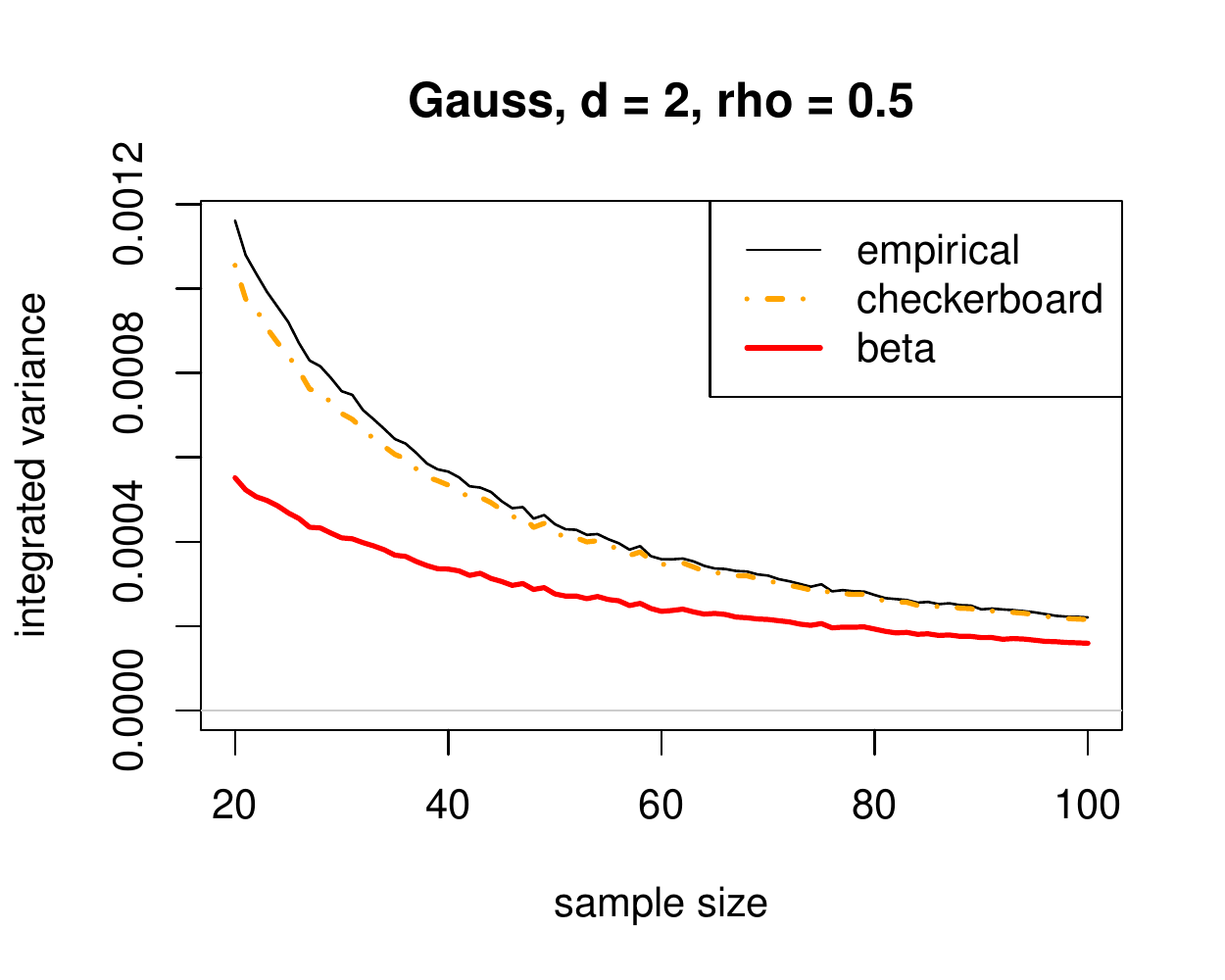}&
\includegraphics[width=0.33\textwidth]{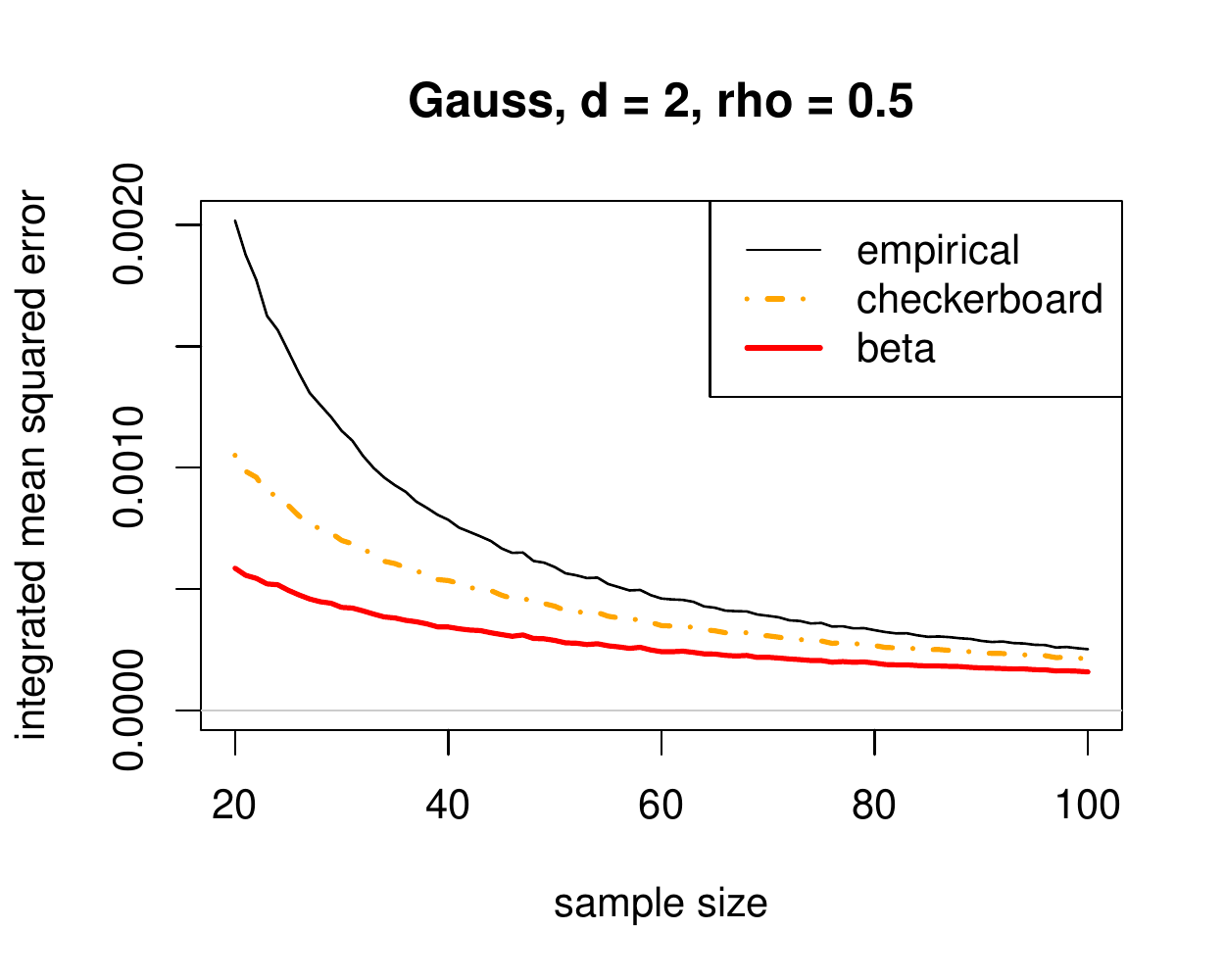}\\
\includegraphics[width=0.33\textwidth]{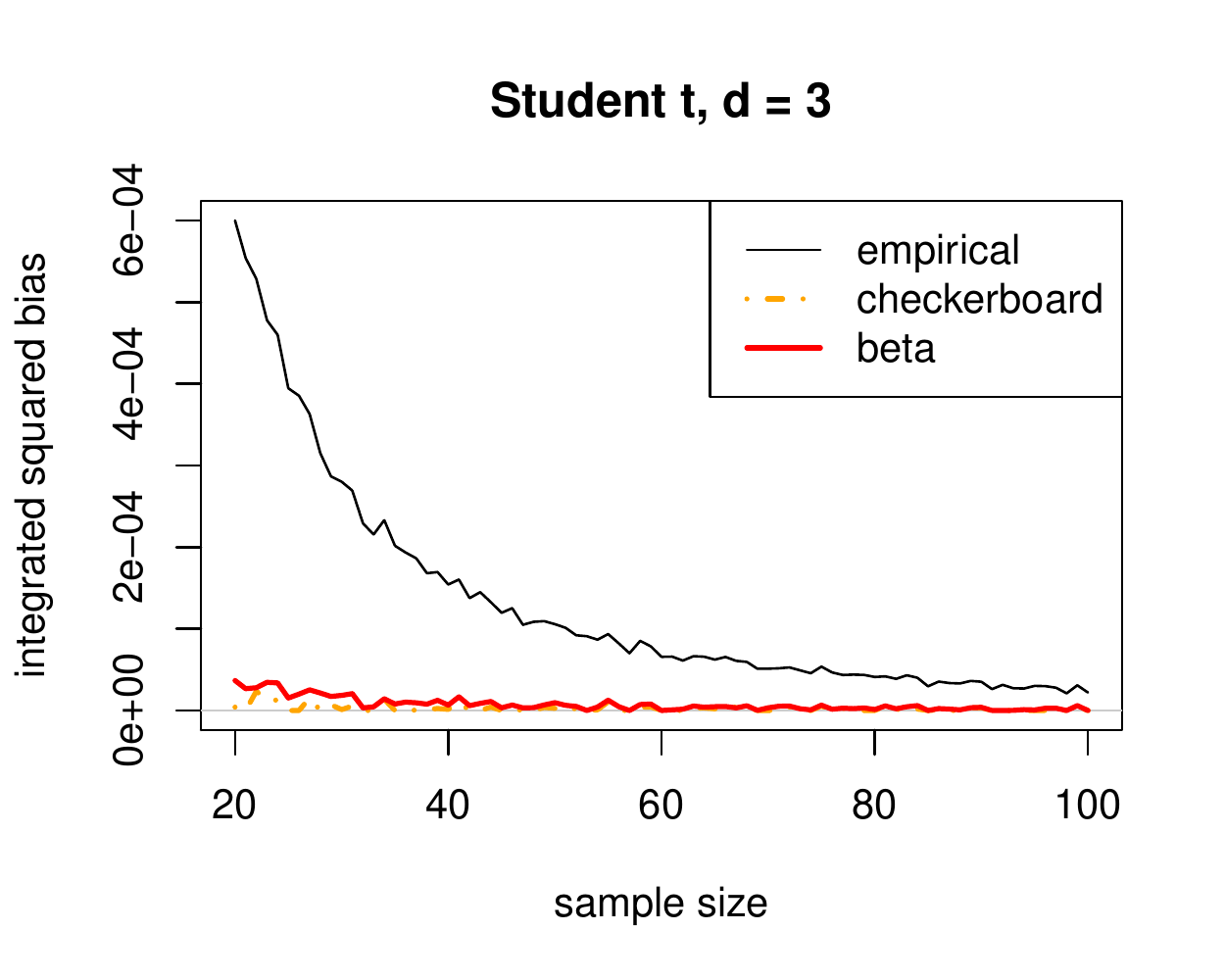}&
\includegraphics[width=0.33\textwidth]{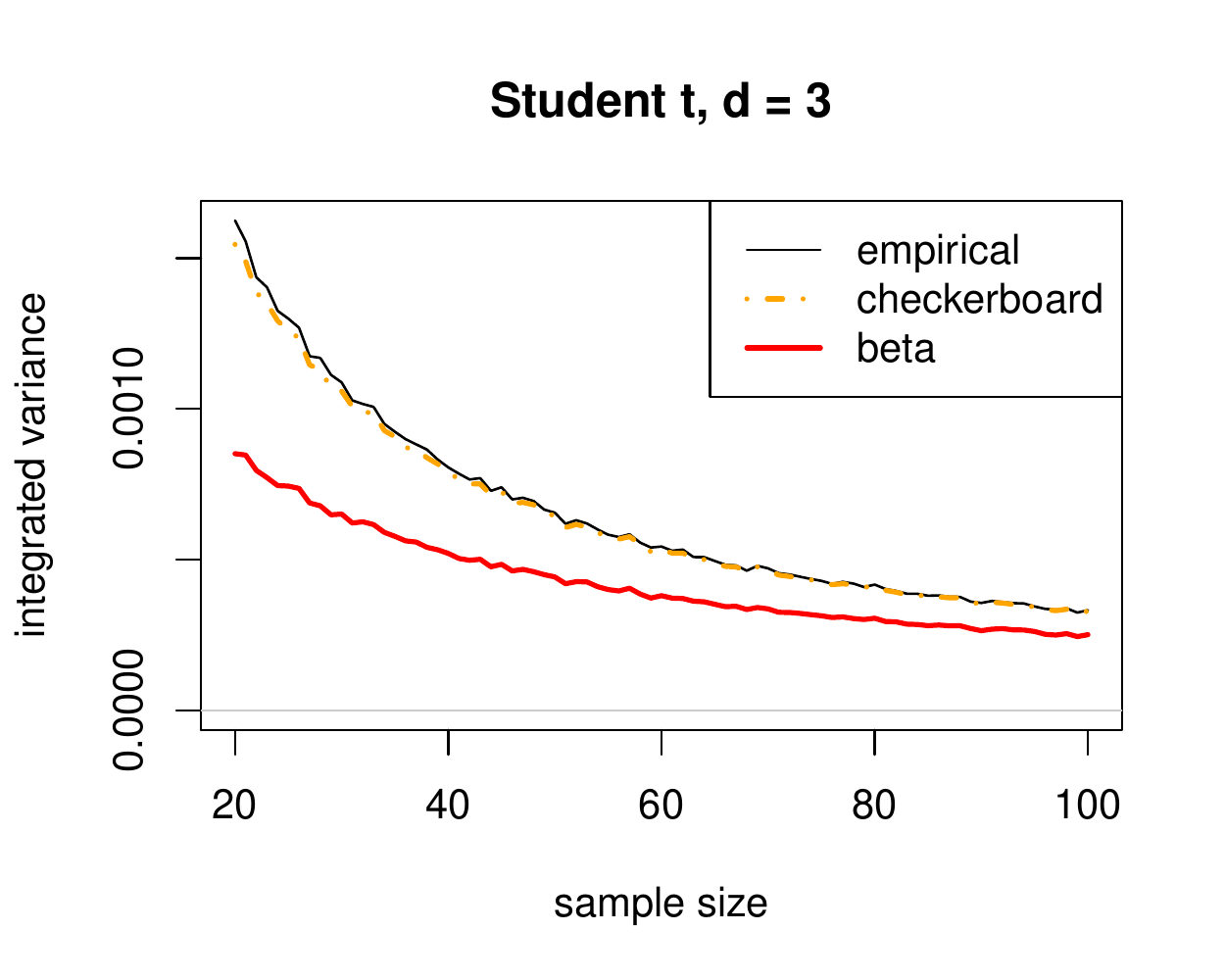}&
\includegraphics[width=0.33\textwidth]{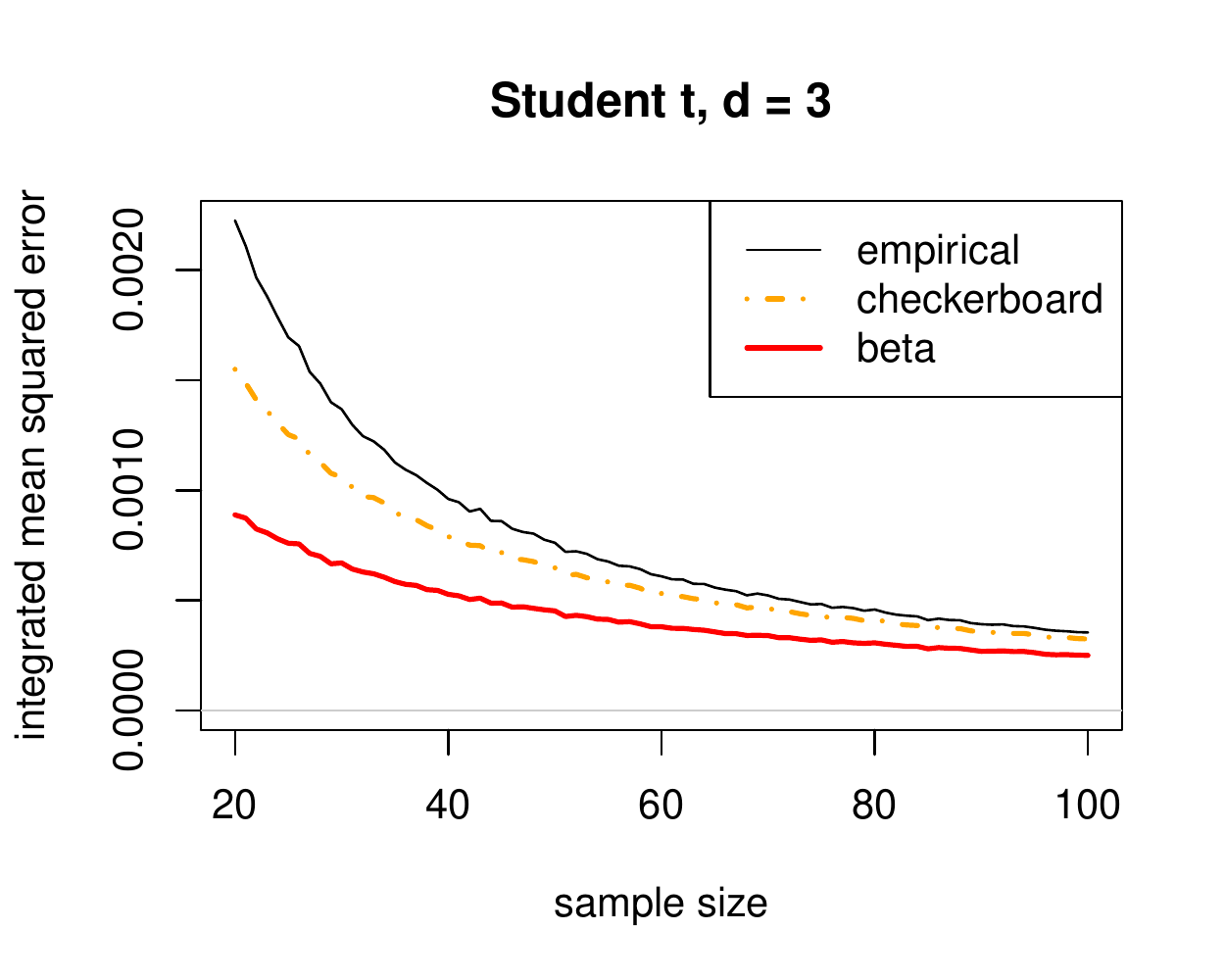}\\
\includegraphics[width=0.33\textwidth]{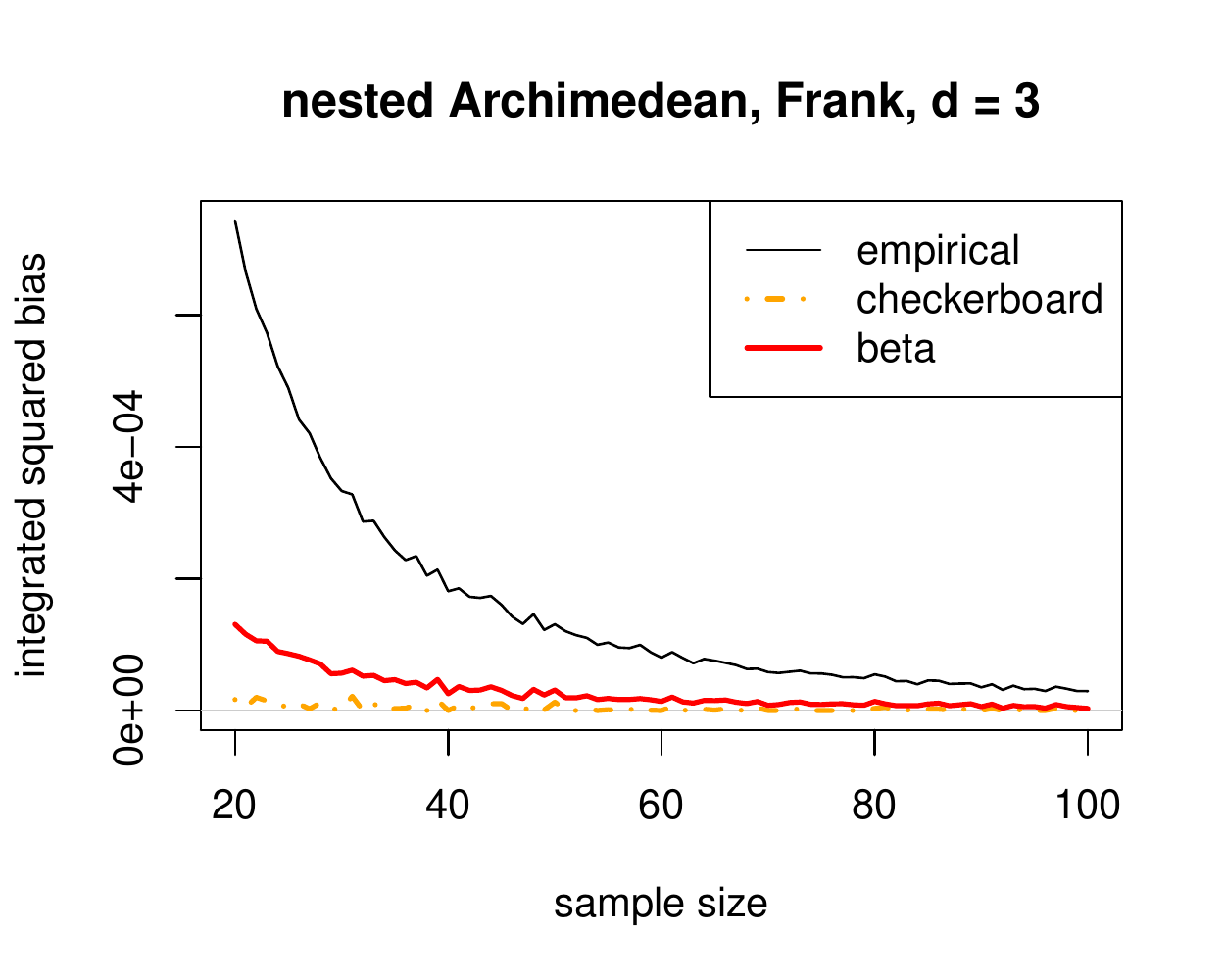}&
\includegraphics[width=0.33\textwidth]{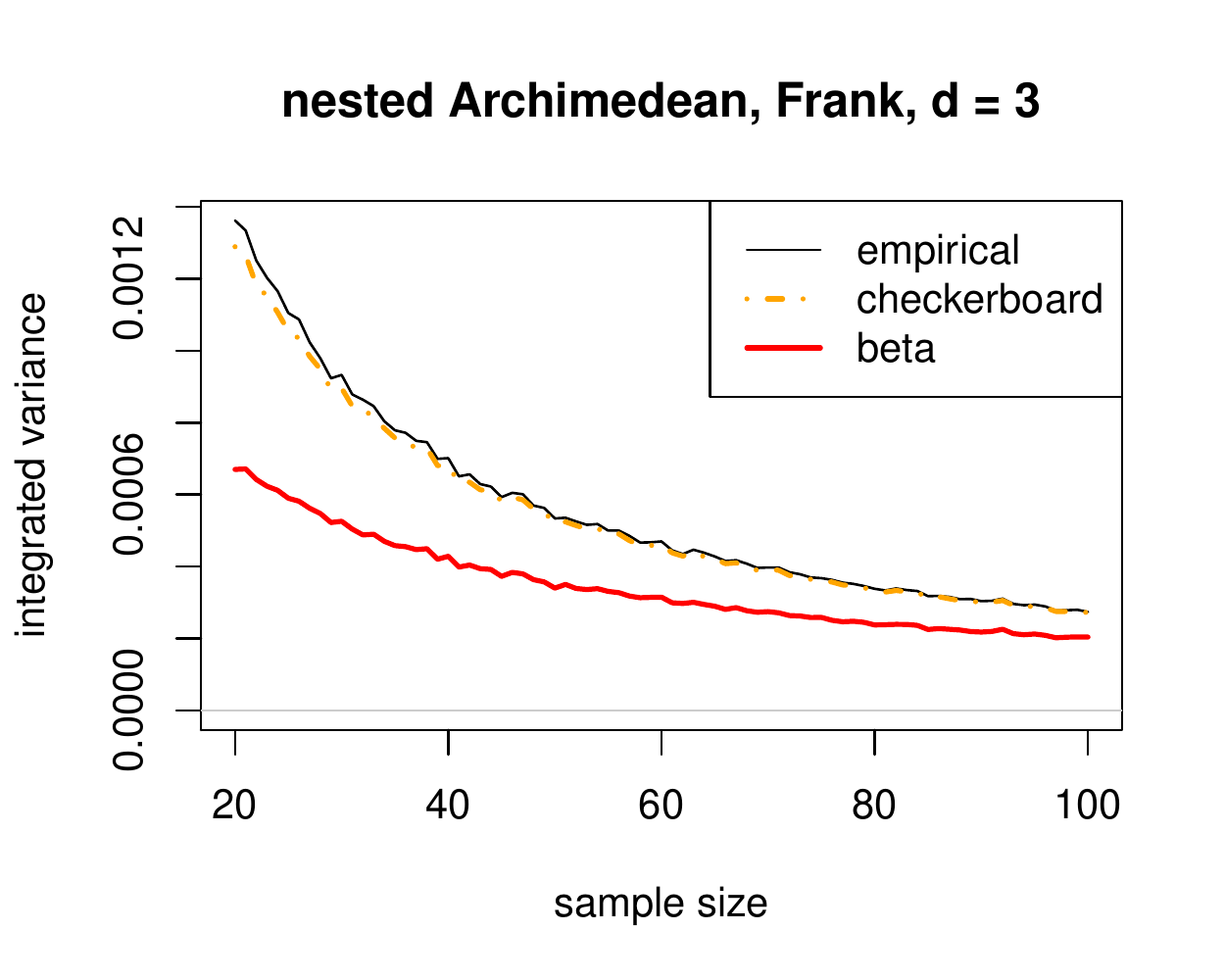}&
\includegraphics[width=0.33\textwidth]{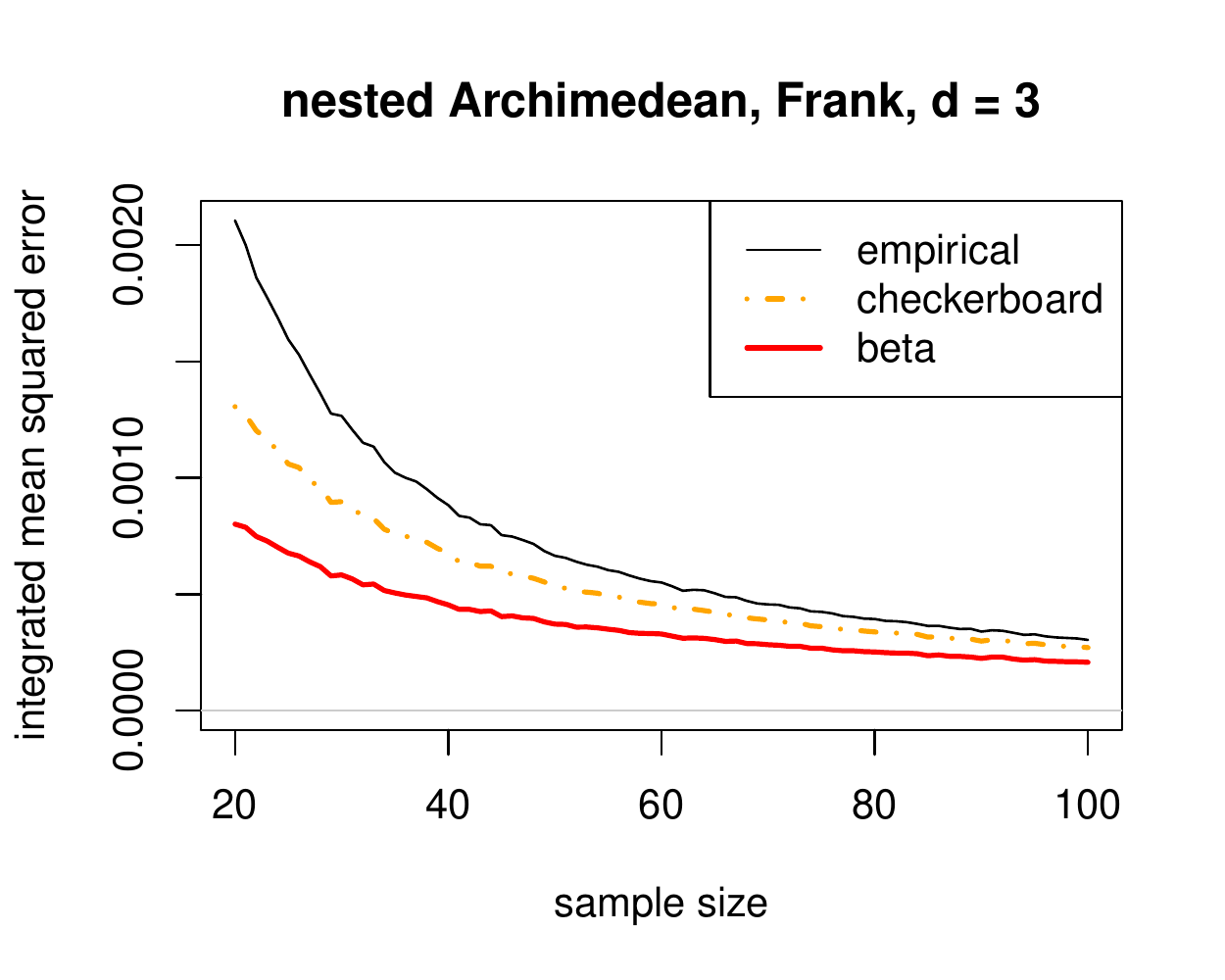}
\end{tabular}
\end{center}
\caption{\label{fig:emp-checker-beta}
Empirical vs empirical checkerboard vs empirical beta copulas. From the top row: FGM ($\theta=-1$), independence, Gauss ($\rho=0.5$), trivariate Student t, and trivariate nested Archimedean.  Left: integrated squared bias. Middle: integrated variance. Right: integrated mean squared error. See Section~\ref{sec:simul} and Appendix~\ref{app:estim} for details. Based on 20\,000 Monte Carlo replications.}
\end{figure}

\clearpage
\begin{figure}
  \begin{center}
    \begin{tabular}{cc}
      \includegraphics[width=.4\textwidth]{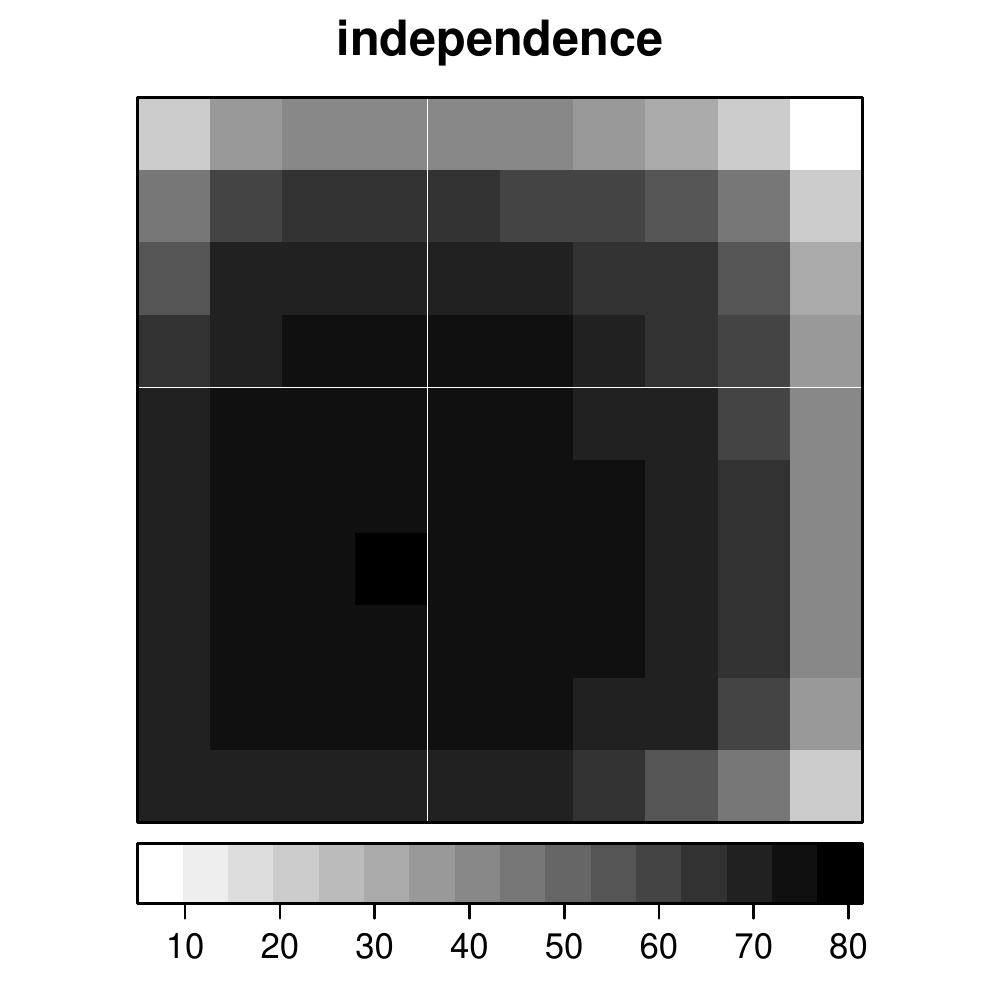} &
      \includegraphics[width=.4\textwidth]{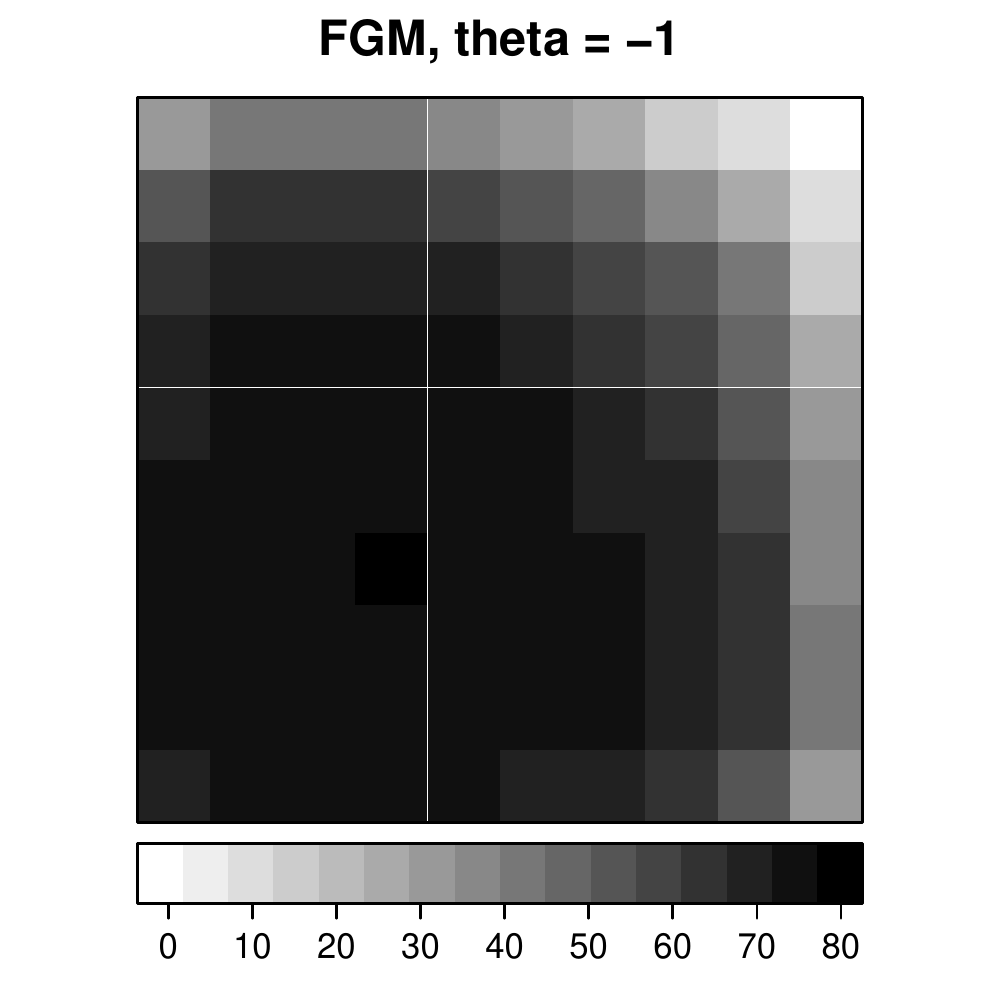} \\
      \includegraphics[width=.4\textwidth]{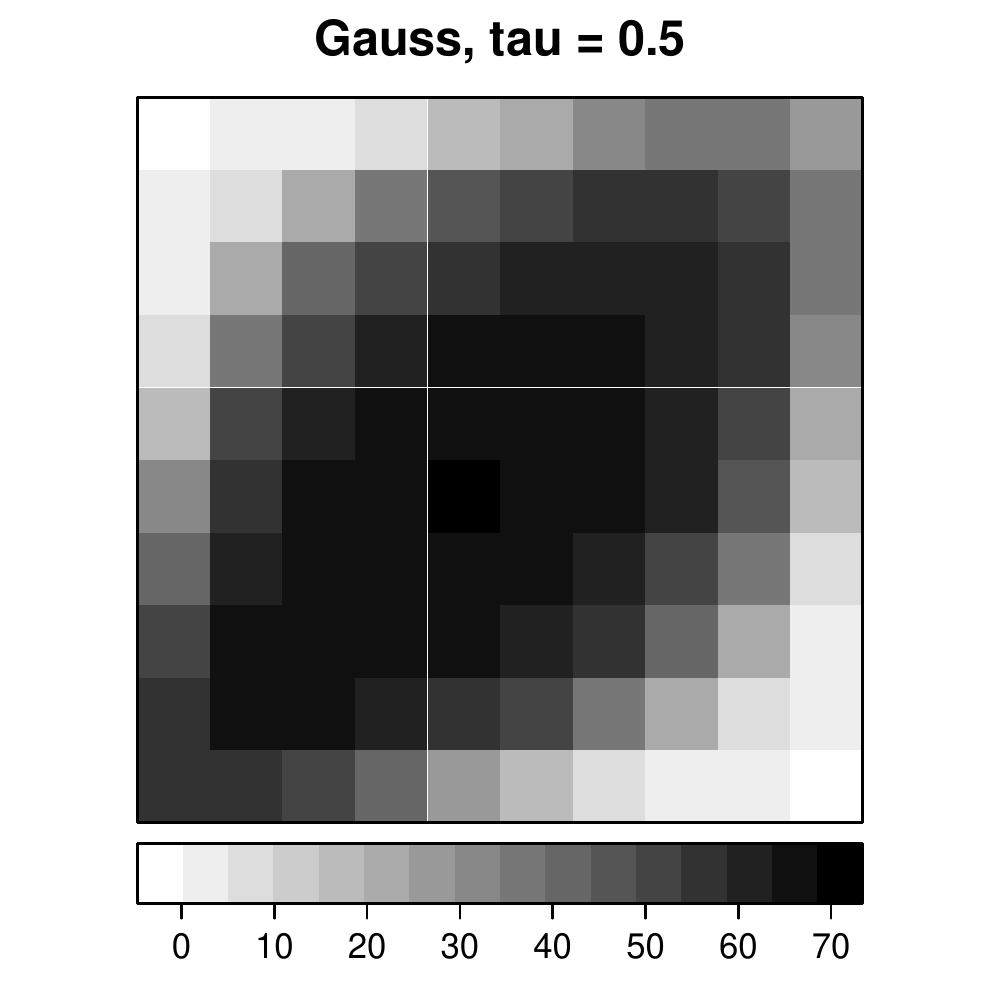} & 
      \includegraphics[width=.4\textwidth]{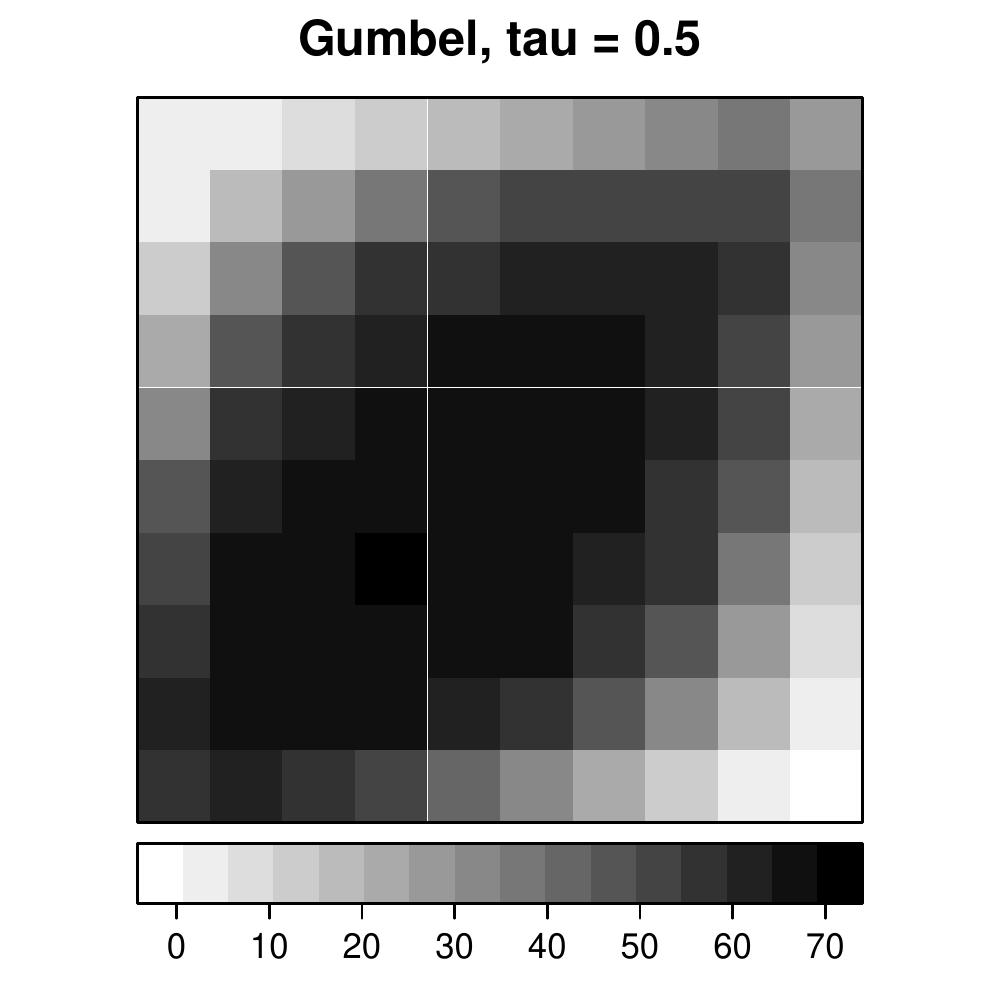}
    \end{tabular}
  \end{center}
  \caption{\label{fig:LRE} Localized relative efficiencies \eqref{eq:LRE} of the empirical copula with respect to the empirical beta copula at sample size $n = 100$ based on a partition of the unit square in $10 \times 10$ square cells $B$ and for random samples drawn from four bivariate copulas. Computations based upon 20\,000 Monte Carlo replications.  See Section~\ref{sec:simul} and Appendix~\ref{app:estim} for details.}
\end{figure}

\begin{figure}
\begin{tabular}{@{}c@{}c@{}c@{}}
\includegraphics[width=0.33\textwidth]{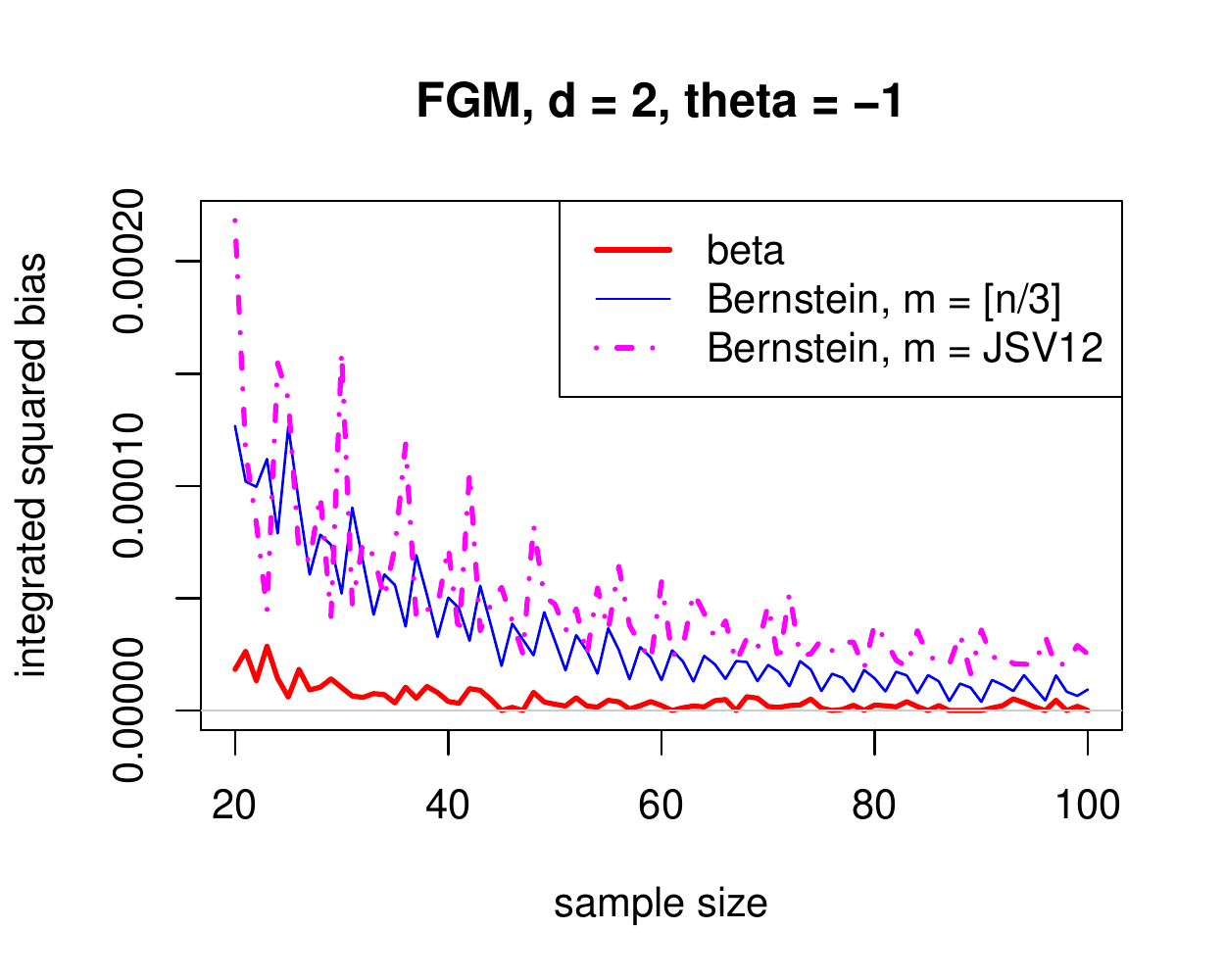}&
\includegraphics[width=0.33\textwidth]{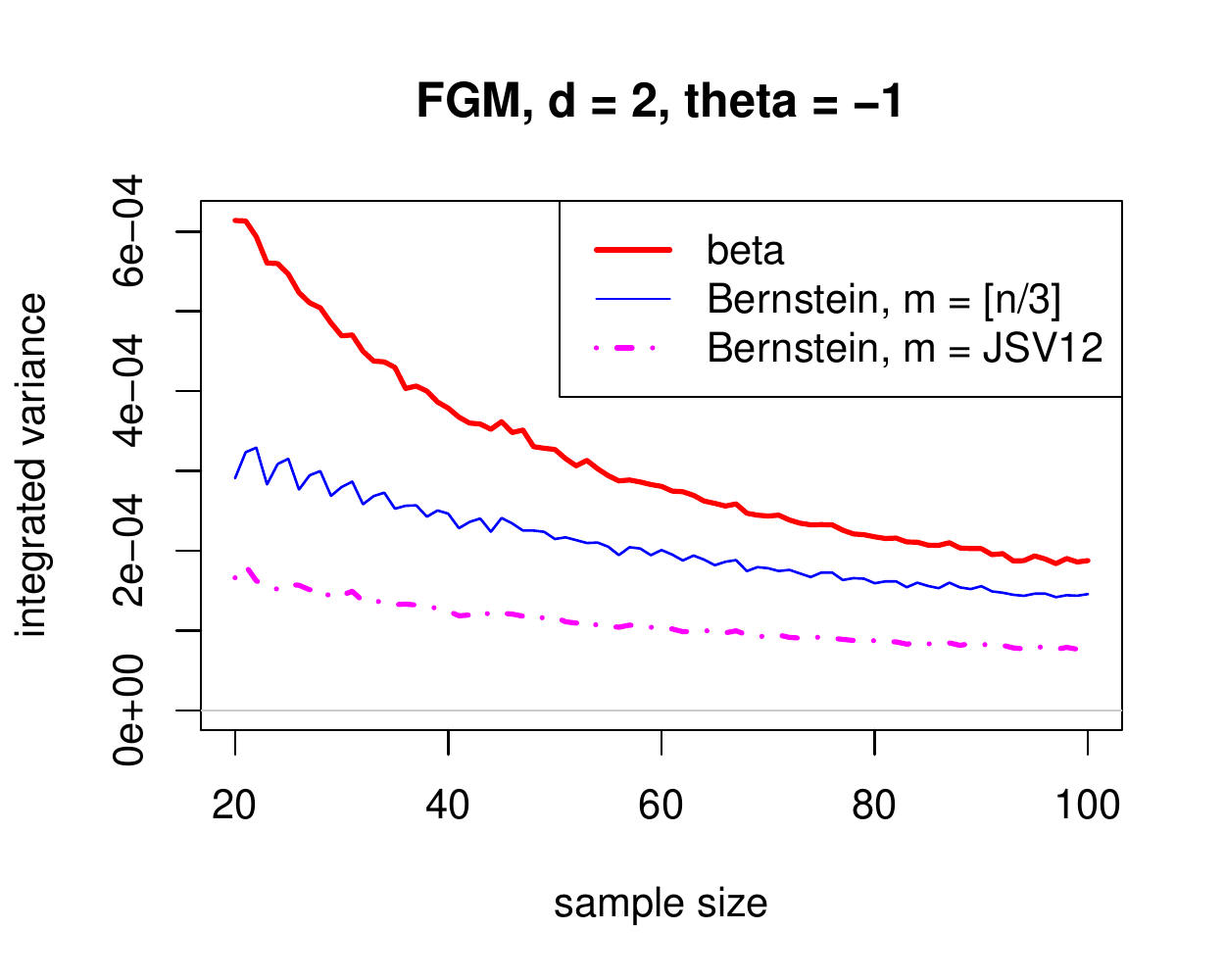}&
\includegraphics[width=0.33\textwidth]{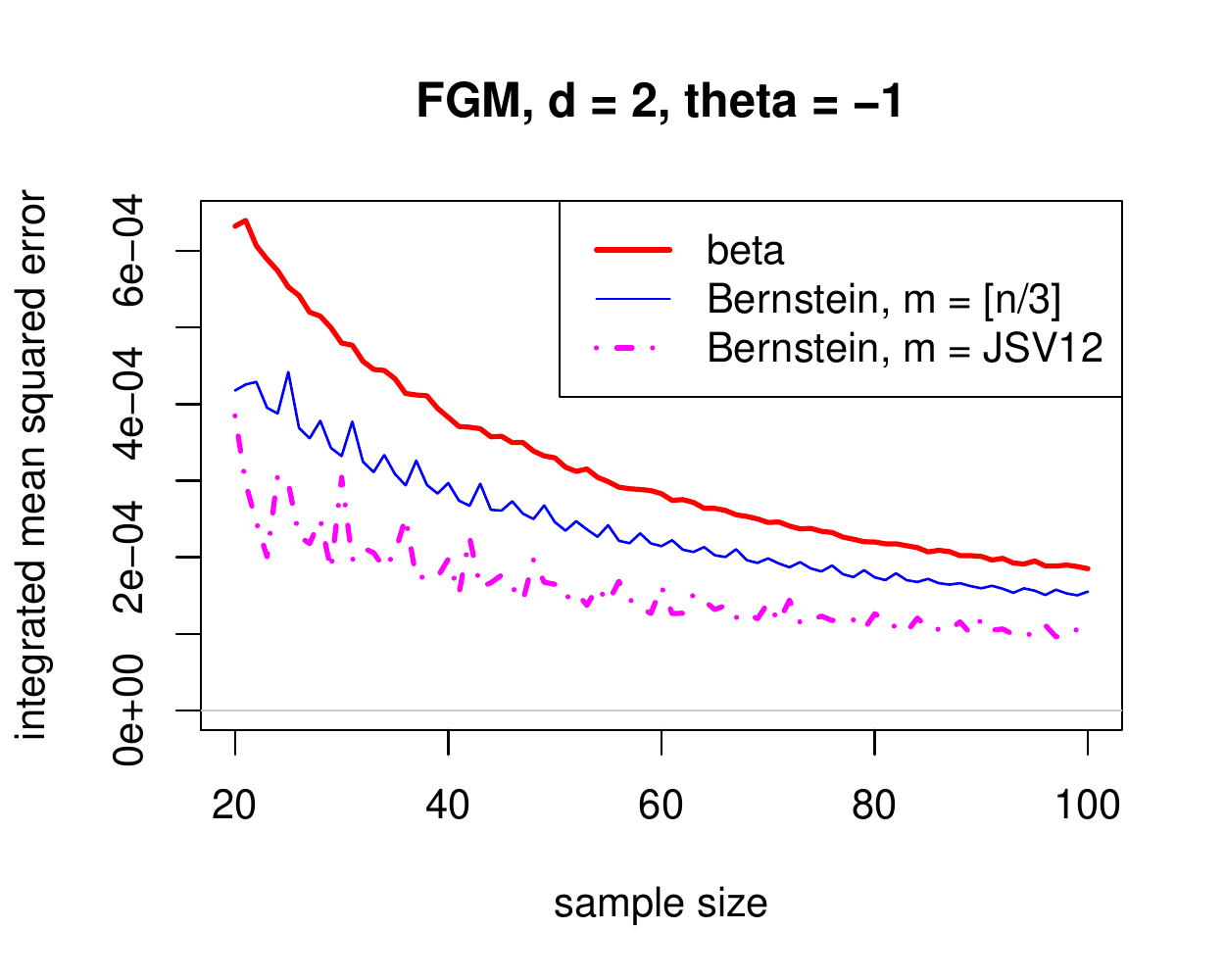}\\
\includegraphics[width=0.33\textwidth]{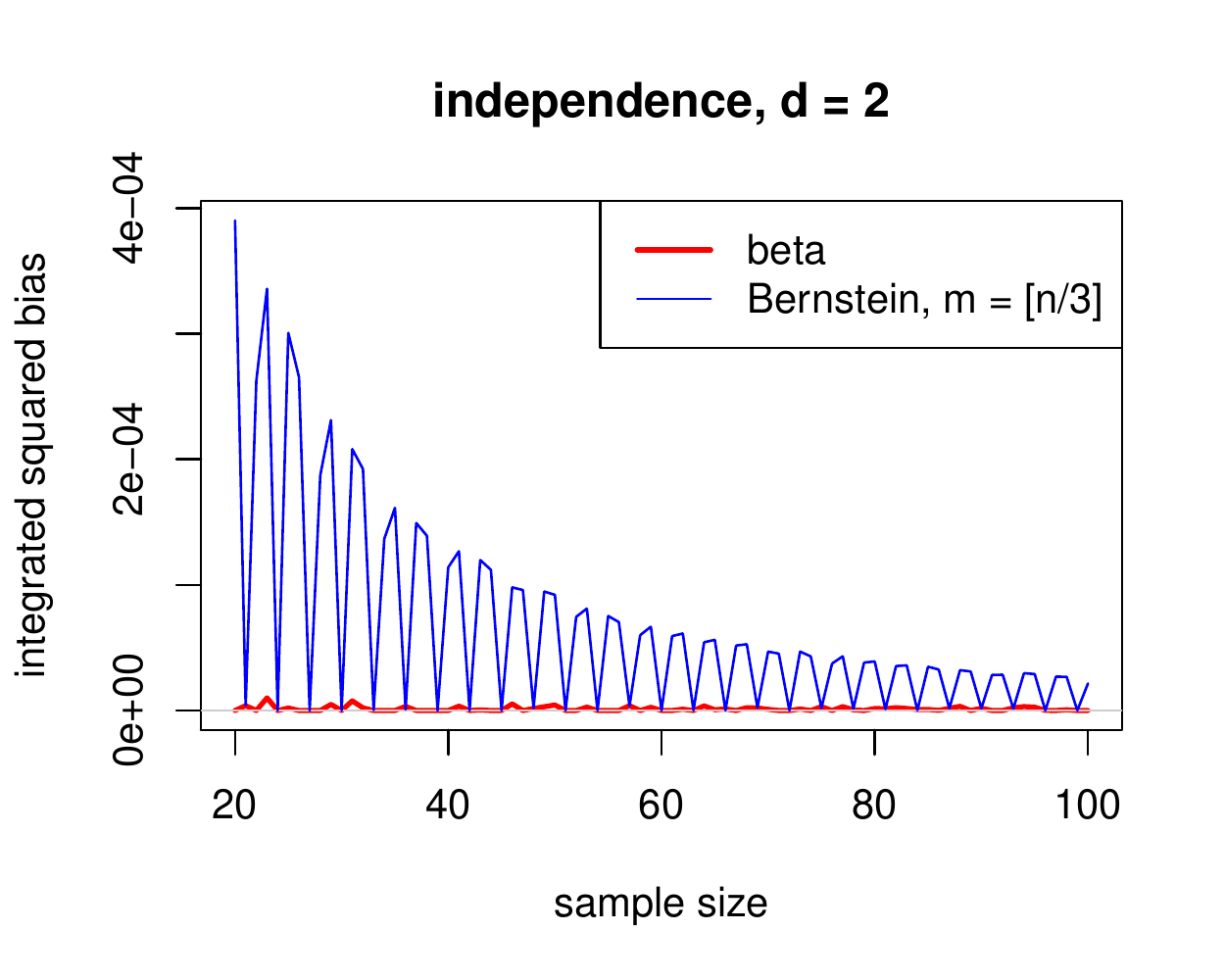}&
\includegraphics[width=0.33\textwidth]{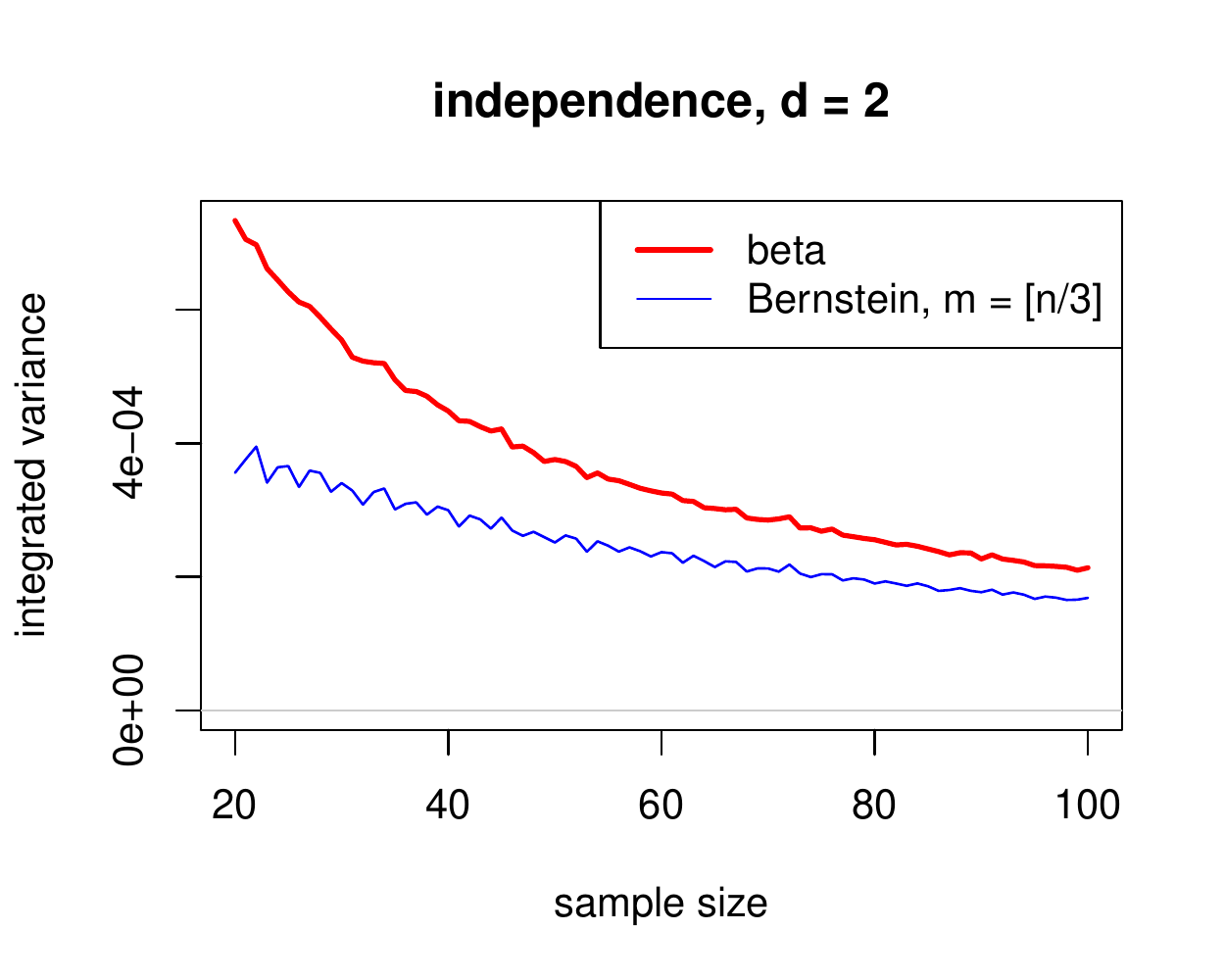}&
\includegraphics[width=0.33\textwidth]{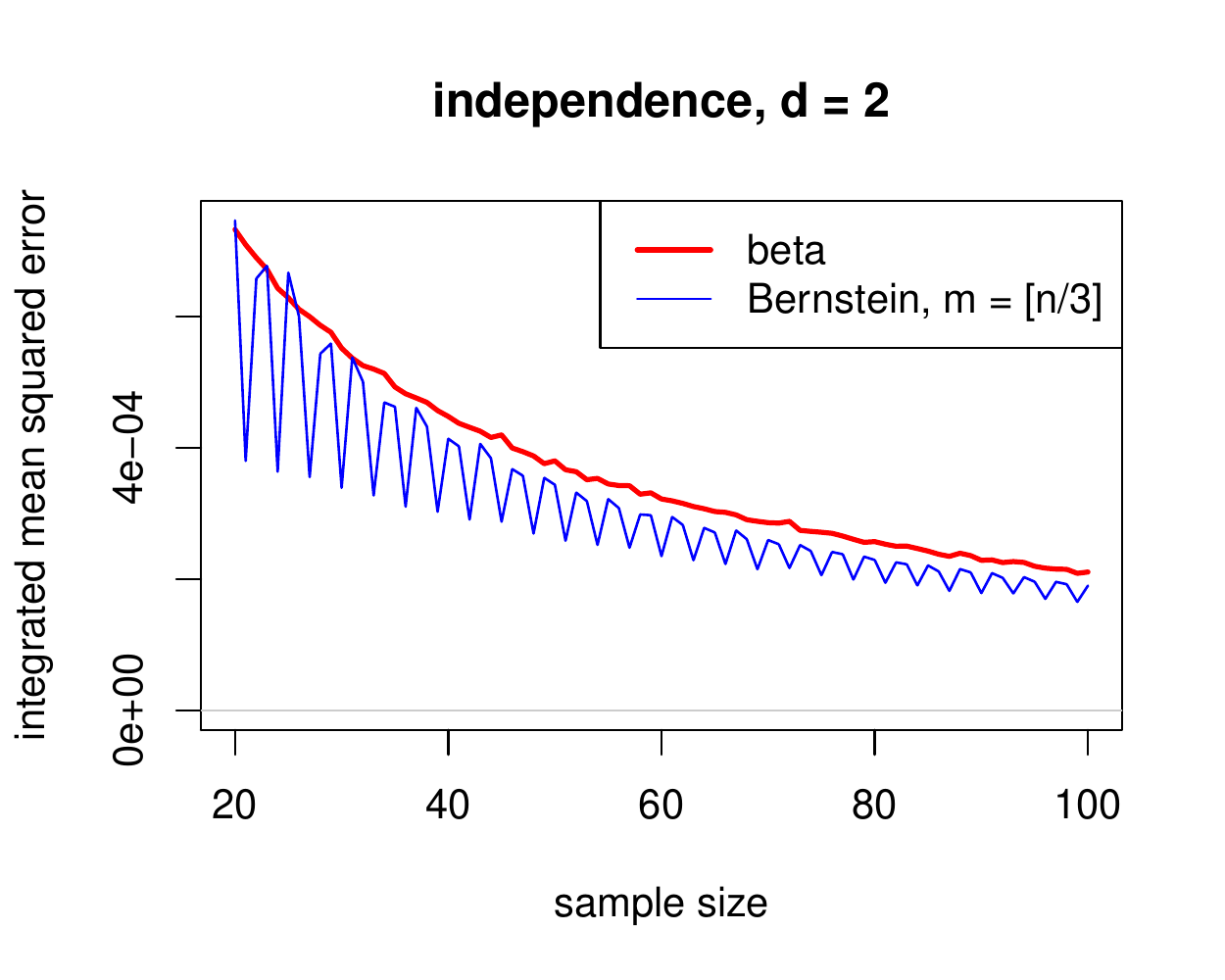}\\
\includegraphics[width=0.33\textwidth]{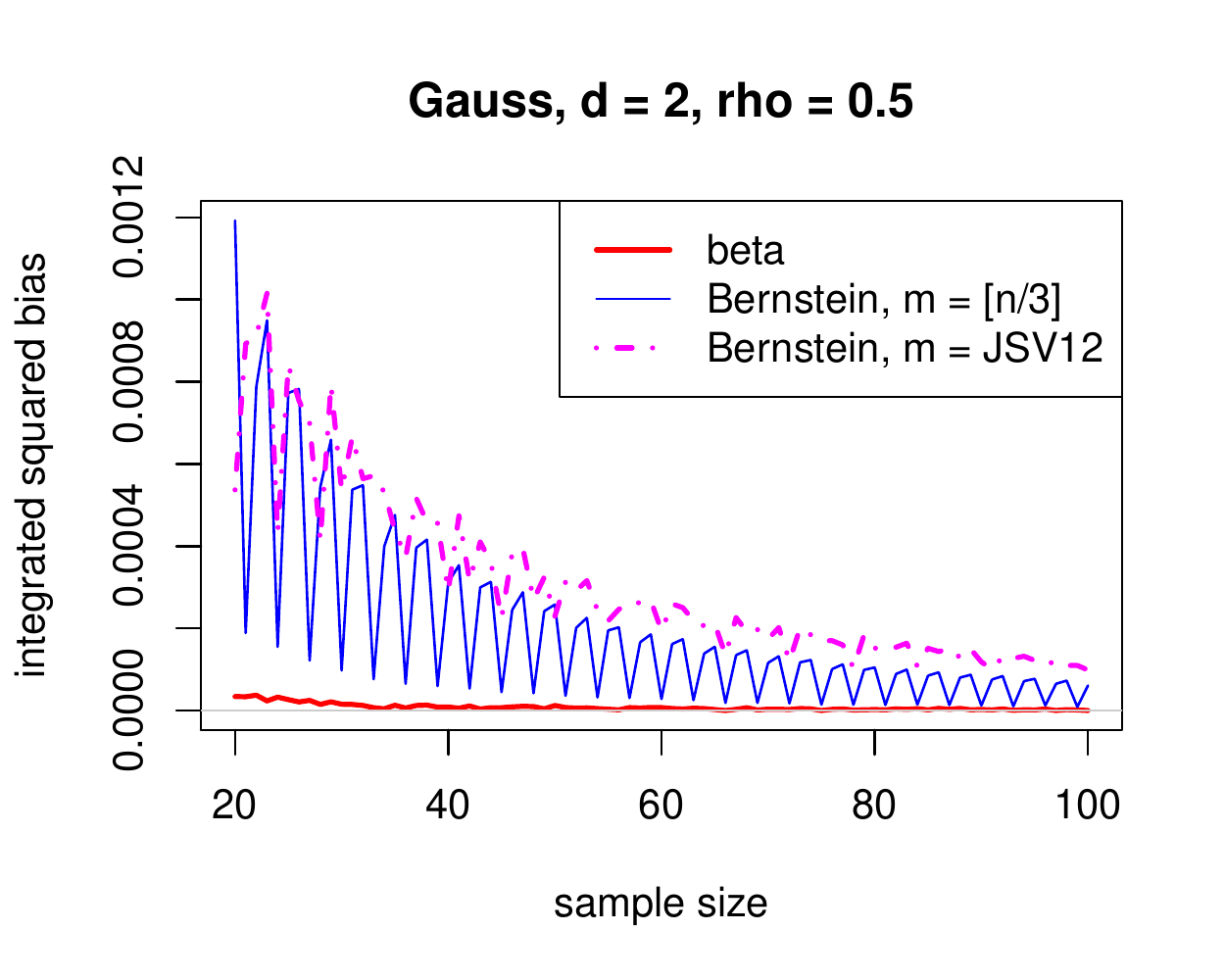}&
\includegraphics[width=0.33\textwidth]{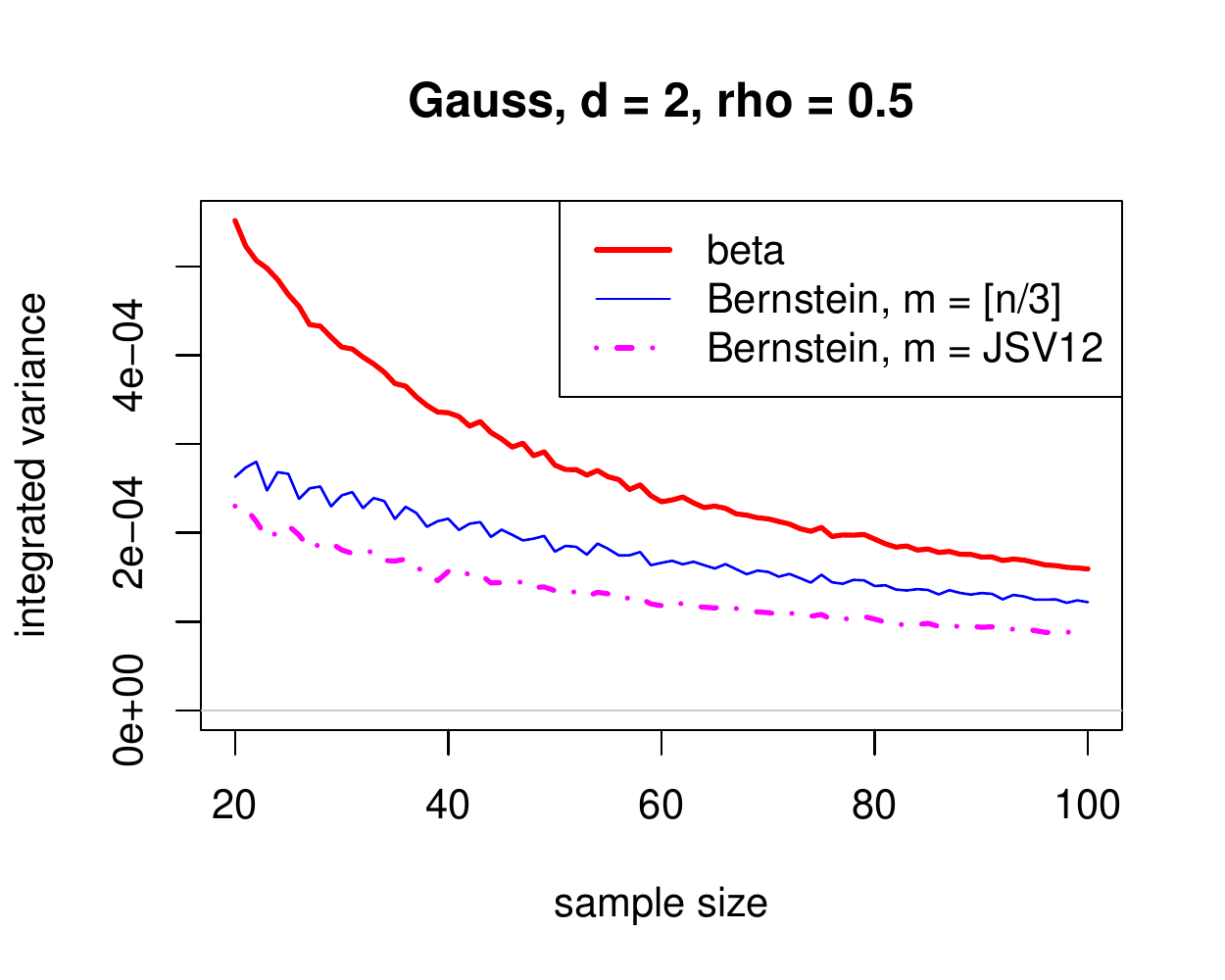}&
\includegraphics[width=0.33\textwidth]{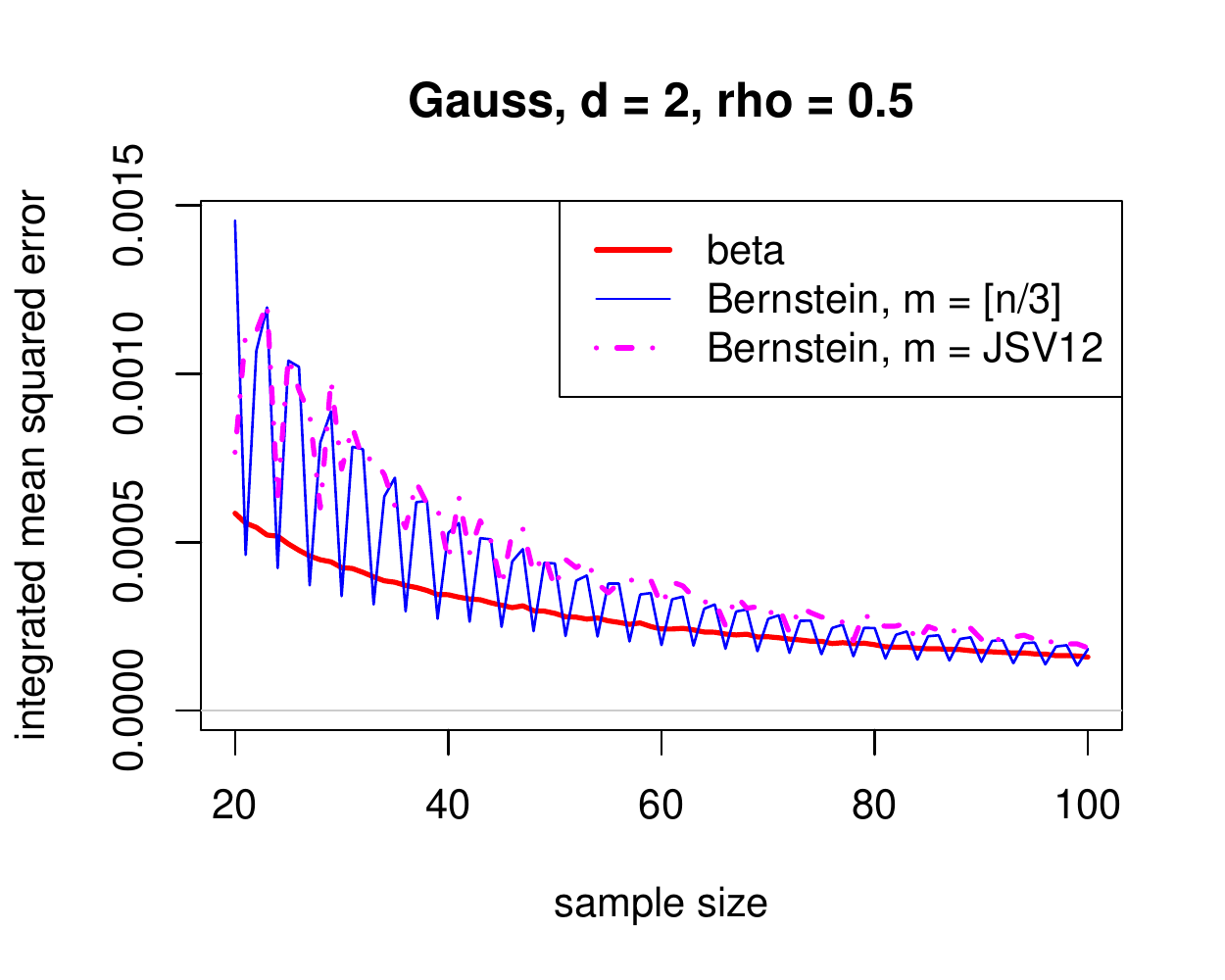}\\
\includegraphics[width=0.33\textwidth]{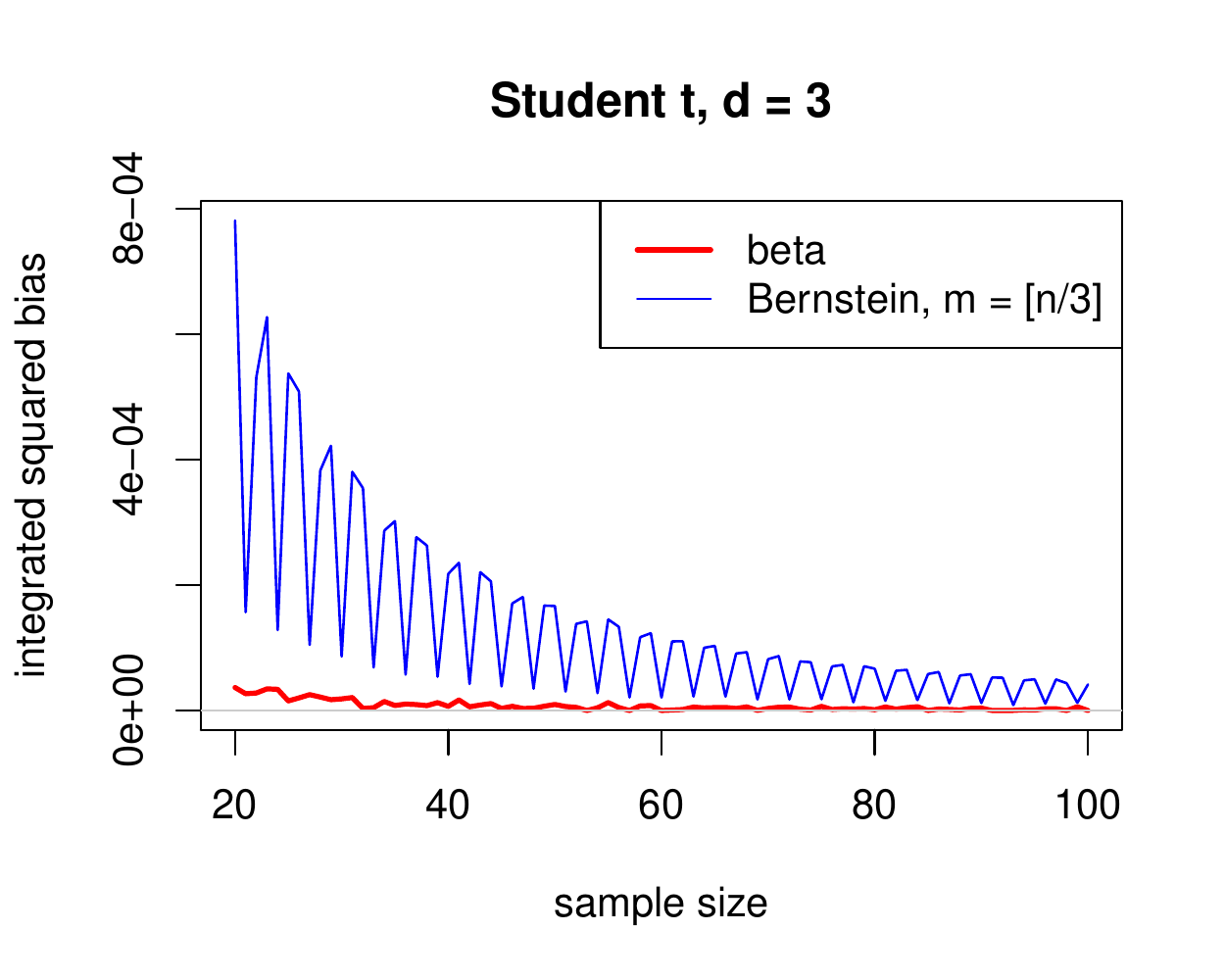}&
\includegraphics[width=0.33\textwidth]{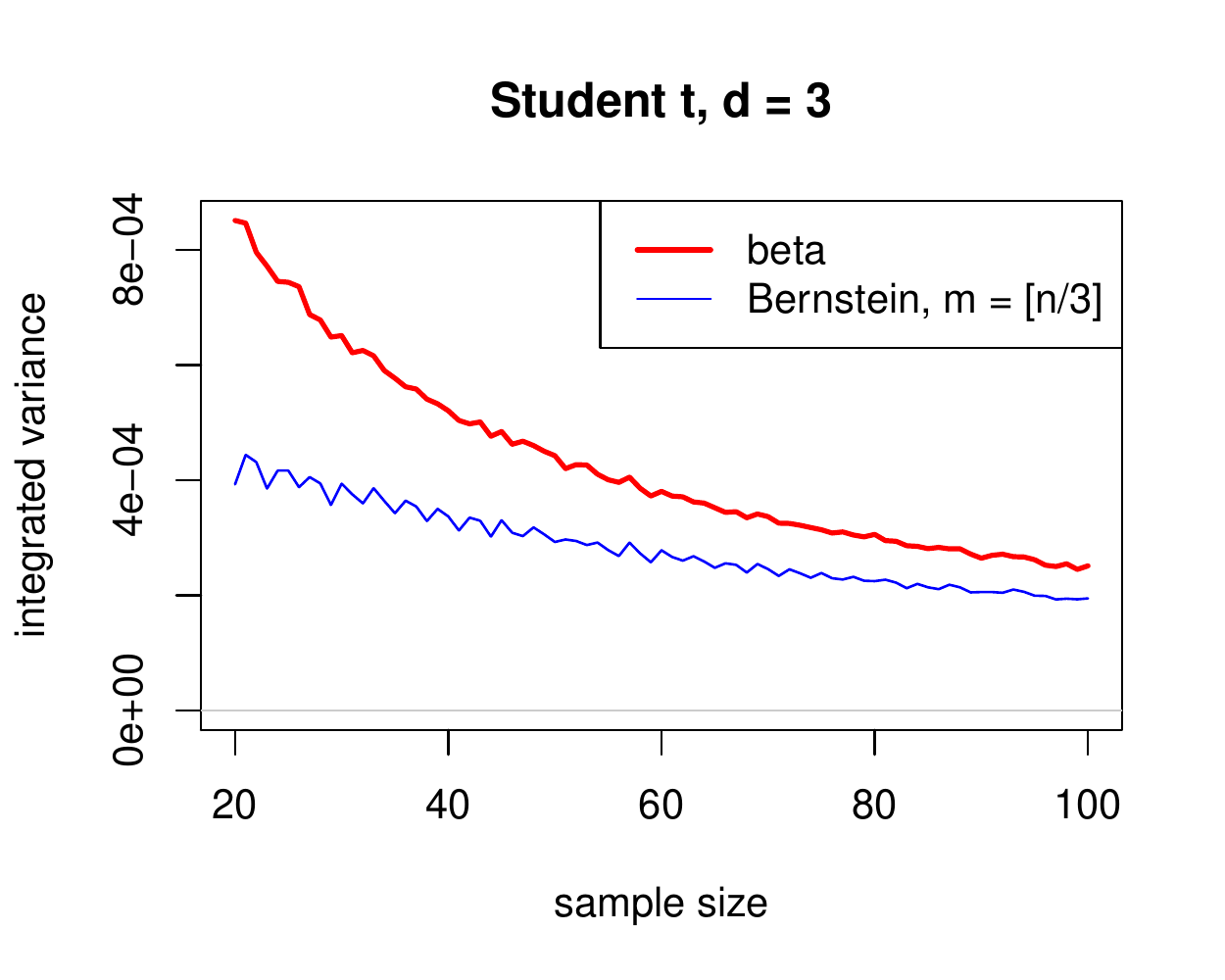}&
\includegraphics[width=0.33\textwidth]{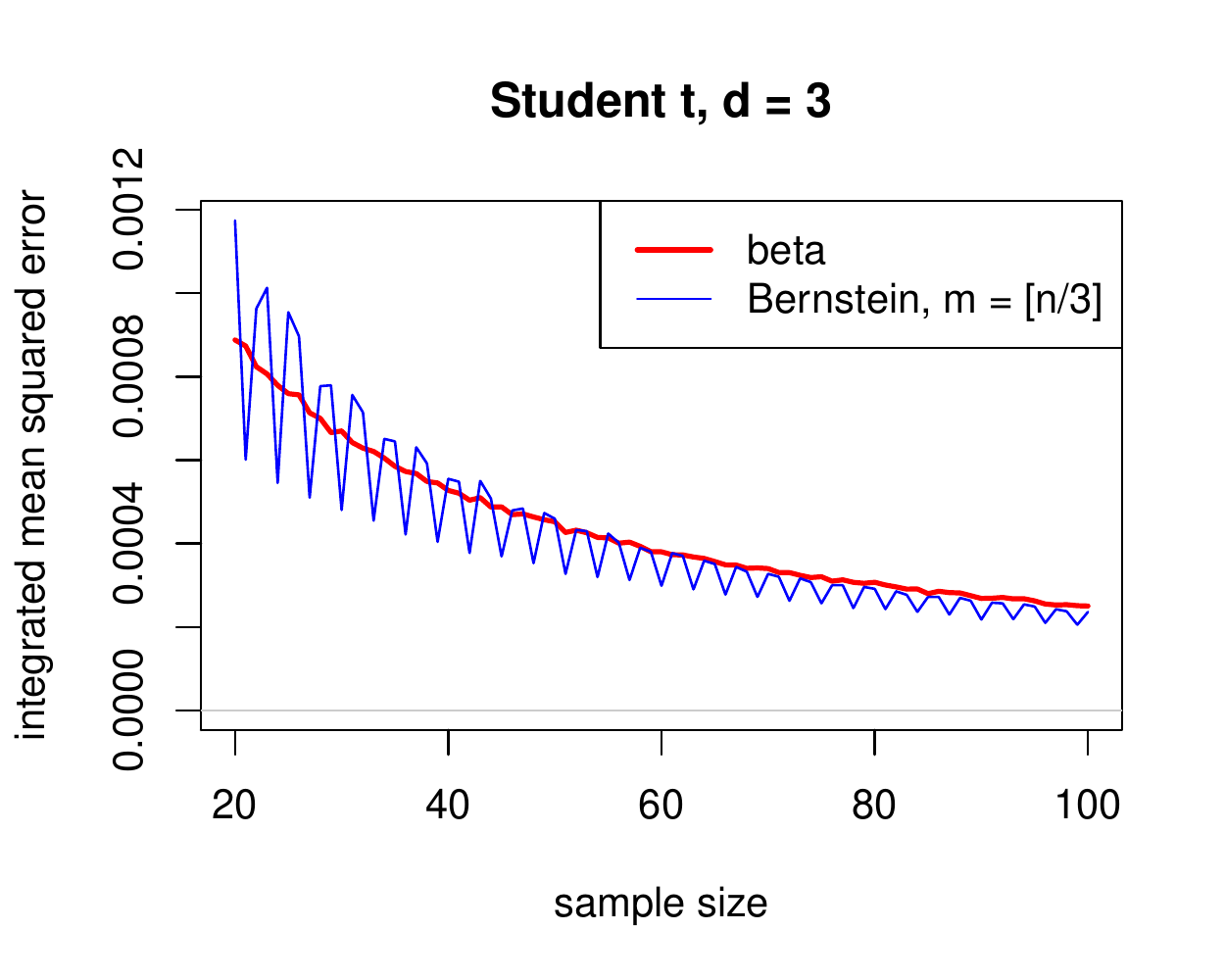}\\
\includegraphics[width=0.33\textwidth]{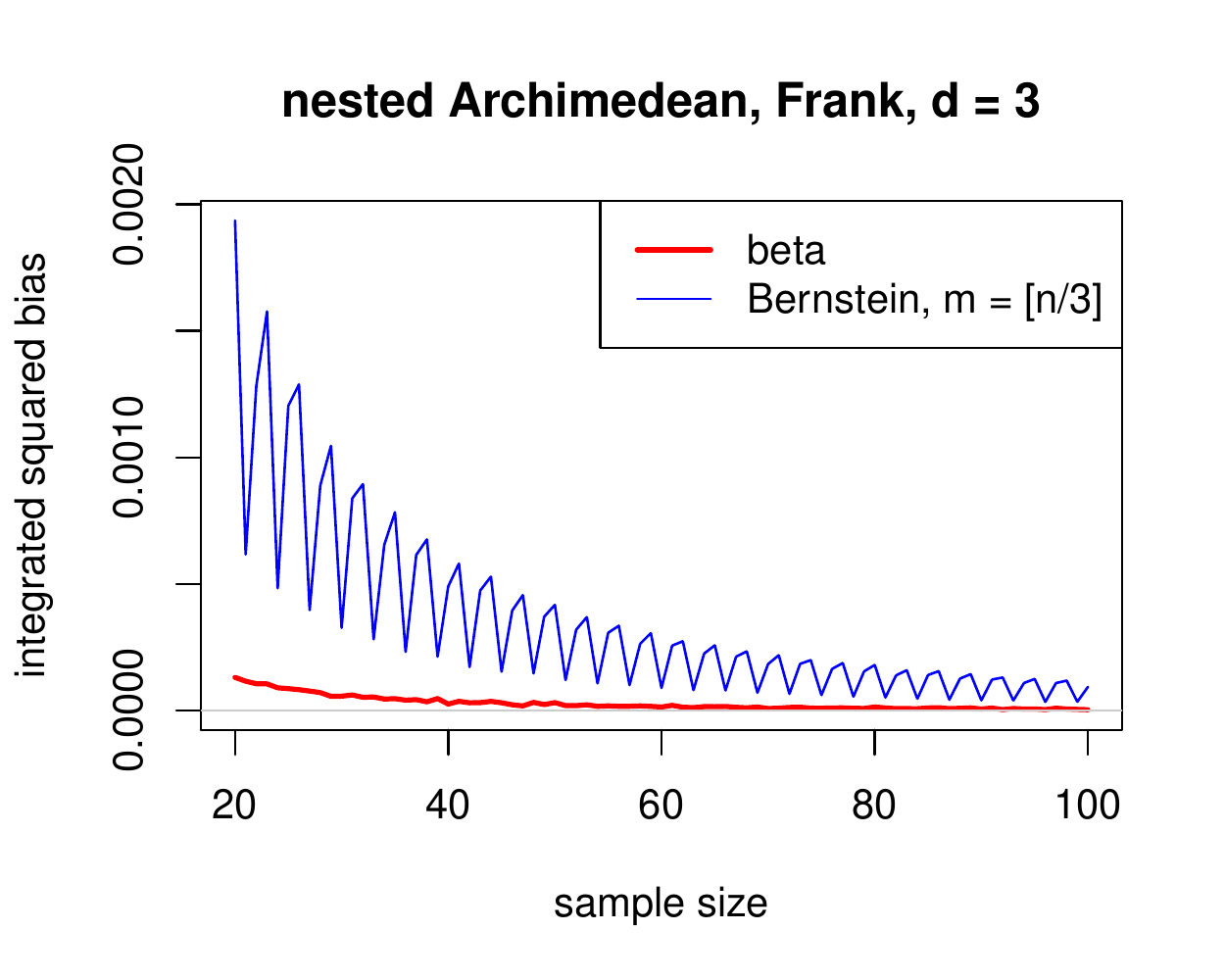}&
\includegraphics[width=0.33\textwidth]{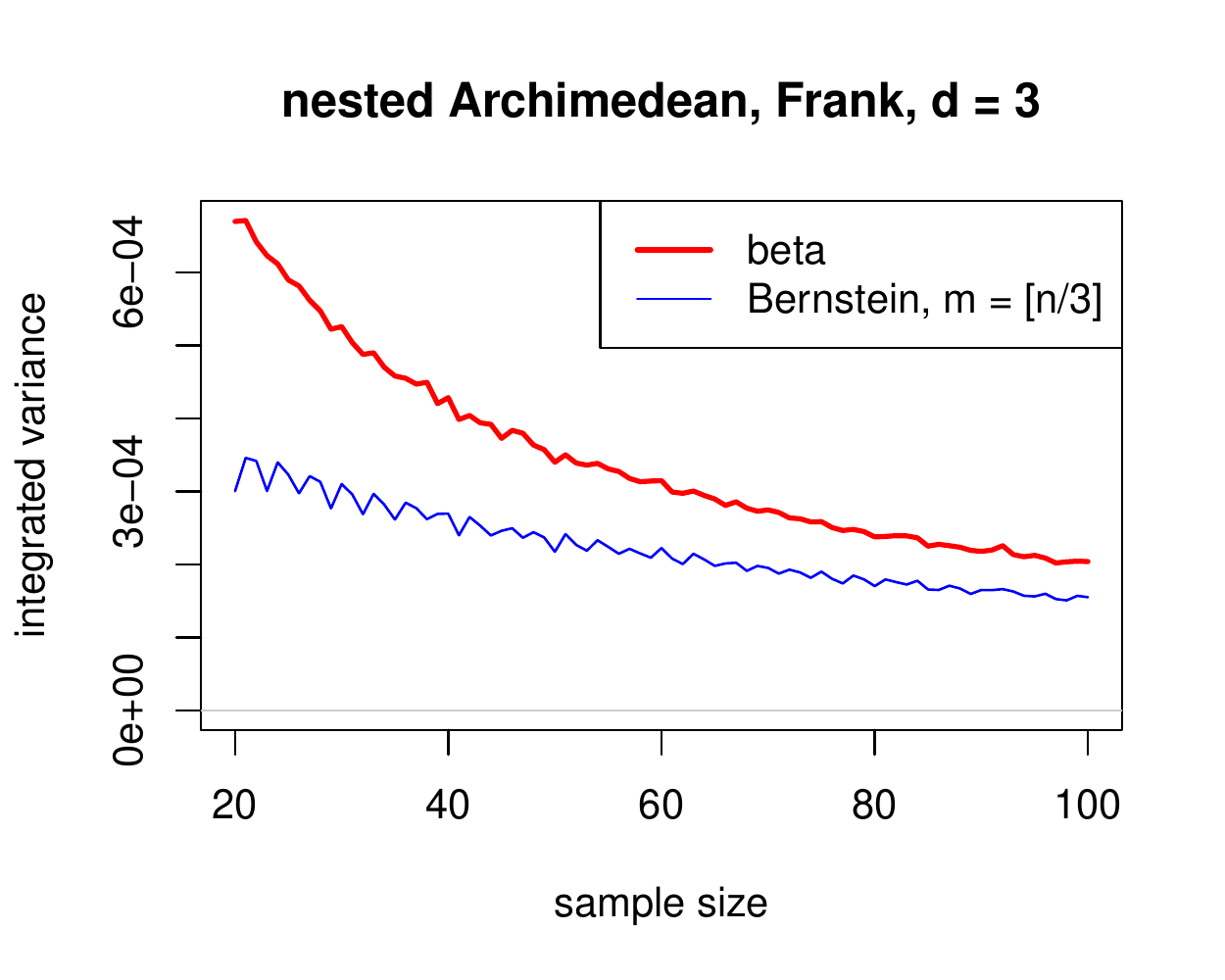}&
\includegraphics[width=0.33\textwidth]{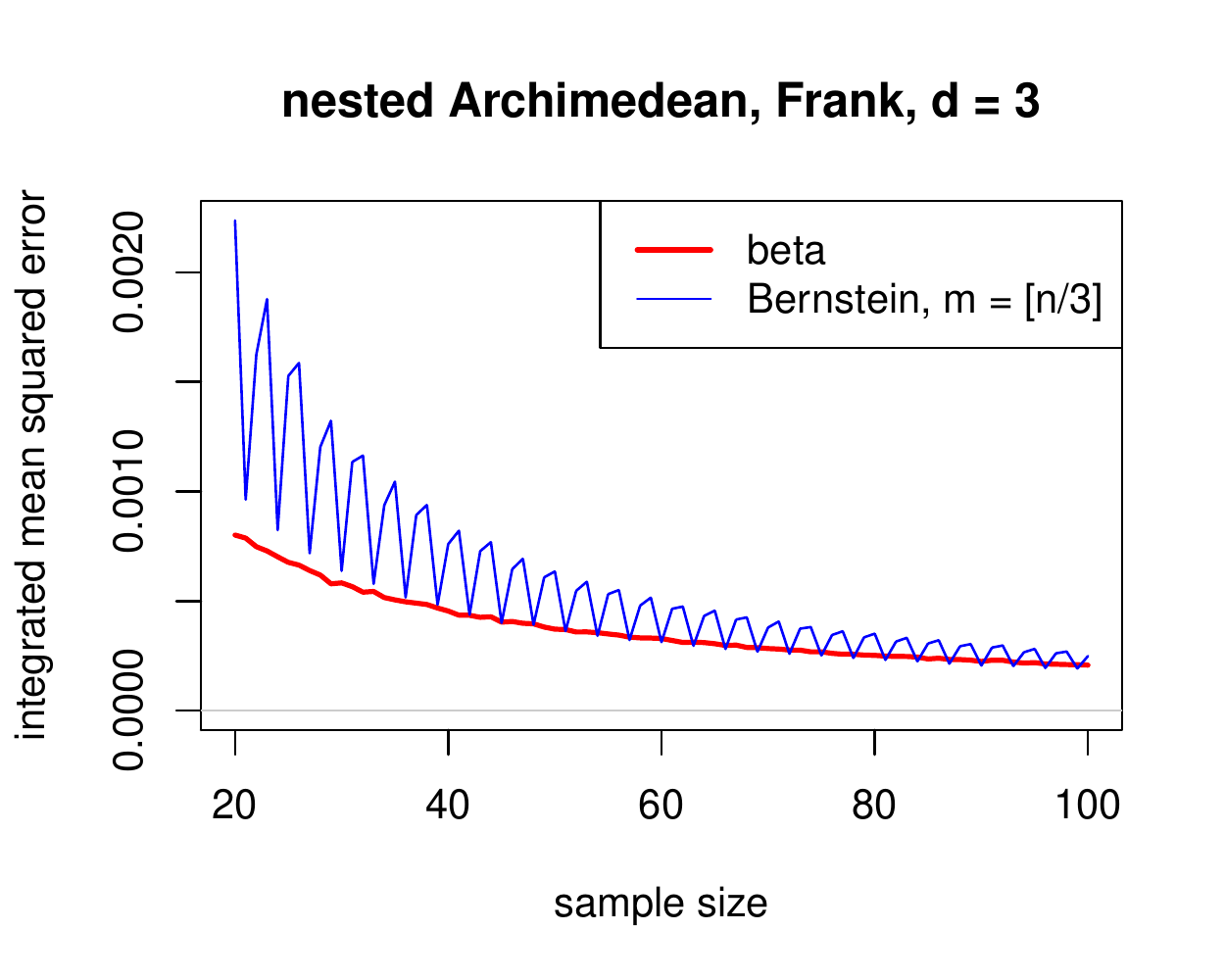}
\end{tabular}
\caption{\label{fig:beta-bern3-bernJSV}
Empirical beta copulas vs empirical Bernstein ($m = \lceil n/3 \rceil$) vs empirical Bernstein ($m$ as in \eqref{eq:JSV}). From the top row: FGM ($\theta=-1$), independence, Gauss ($\rho=0.5$), trivariate Student t, and trivariate nested Archimedean.  Left: integrated squared bias. Middle: integrated variance. Right: integrated mean squared error. See Section~\ref{sec:simul} and Appendix~\ref{app:estim} for details. Based on 20\,000 Monte Carlo replications.}
\end{figure}


\begin{thebibliography}{99}
\bibitem{Baker2008} Baker, R. (2008). An order-statistics-based method for constructing
multivariate distributions with fixed marginals, \textit{Journal of Multivariate Analysis}, 
\textbf{99}, 2312--2327.
\bibitem{Buech-Dette2010} B\"{u}cher, A. and Dette, H. (2010). A note on bootstrap 
approximations for the empirical copula process, \textit{Statistics and Probability Letters}, 
\textbf{80}, 1925--1932.
\bibitem{Buech-Volg2013} B\"{u}cher, A. and Volgushev S. (2013). Empirical and sequential empirical copula processes under serial dependence, \textit{Journal of Multivariate Analysis}, \textbf{119}, 61--70.
\bibitem{Carley-Taylor2002} Carley, H. and Taylor, M.~D. (2002). A new proof of Sklar's theorem, 
in: Cuadras, C.~M., Fortiana, J. and Rodr\'{\i}guez-Lallena, J.~A. (eds.) (2002). 
\textit{Distributions with Given Marginals and Statistical Modelling}, p.p.~29--34, 
Kluwer Academic Publishers, Dordrecht.
\bibitem{Deheu79} Deheuvels, P. (1979). La fonction de d\'ependence empirique
et ses propri\'et\'es, Un test non param\'etrique d'ind\'ependance,
\textit{Bulletin de la classe des sciences, Acad\'emie Royale de Belgique, 
5e s\'erie}, \textbf{65}, 274--292.
\bibitem{DeVore-Lorentz93} DeVore, R.~A. and Lorentz, G.~G. (1993). 
\textit{Constructive Approximations}, Springer, Berlin-Heidelberg. 
\bibitem{Fer-Rad-Weg04} Fermanian, J.-D., Radulovi\'{c}, D. and Wegkamp, M.~J. (2004).
Weak convergence of empirical copula processes. \textit{Bernoulli}, \textbf{10}, 847--860.
\bibitem{Gen-Nes2007} Genest, C. and Ne\v{s}lehov\'{a}, J. (2007). A primer on copulas 
for count data. \textit{ASTIN Bulletin}, \textbf{37}, 475--515.
\bibitem{Gen-Nes-Rem2014} Genest, C., Ne\v{s}lehov\'{a}, J.~G. and R\'{e}millard, B. (2014). 
On the empirical multilinear copula process for count data. \textit{Bernoulli}, \textbf{20}, 
1344--1371.
\bibitem{LMST97} Li, X., Mikusi\'{n}ski, P., Sherwood, H. and Taylor, M.~D. (1997). 
On approximation of copulas, in: Bene\v{s}, V. and \v{S}t\v{e}p\'{a}n, J. (eds.) (1997). 
\textit{Distributions with Given Marginals and Moment Problems}, p.p.~107--116, 
Kluwer Academic Publishers, Dordrecht.
\bibitem{copulaR} Marius Hofert, Ivan Kojadinovic, Martin Maechler and Jun Yan (2015). \textit{copula: Multivariate Dependence with Copulas.} R package version 0.999-14. URL http://CRAN.R-project.org/package=copula.
\bibitem{Jan-Swan-Ver2012} Janssen, P., Swanepoel, J. and Veraverbeke, N. (2012). 
Large sample behavior of the Bernstein copula estimator, \textit{Journal of 
Statistical Planning and Inference}, \textbf{142}, 1189--1197.
\bibitem{Joe1997} Joe, H. (1997). \textit{Multivariate Models and Dependence Concepts}, Chapman and Hall, London.
\bibitem{Lorentz86} Lorentz, G.~G. (1986), \textit{Bernstein Polynomials}, 2nd ed., 
Chelsea Publishing Company, New York. 
\bibitem{Mai-Scherer2012} Mai, J.-F. and Scherer, M. (2012). \textit{Simulating Copulas: 
Stochastic Models, Sampling Algorithms, and Applications}, Imperial College Press, London.
\bibitem{Nelsen2006} Nelsen, R.~B. (2006). \textit{An Introduction to Copulas}, 
2nd ed., Springer-Verlag, New York.
\bibitem{R} R Core Team (2016). \textit{R: A language and environment for statistical computing.} R Foundation for Statistical Computing, Vienna, Austria. URL https://www.R-project.org/.
\bibitem{Sance-Satch2004} Sancetta, A. and Satchell, S. (2004). The Bernstein copula 
and its applications to modeling and approximations of multivariate distributions, 
\textit{Econometric Theory}, \textbf{20}, 535--562.
\bibitem{Segers2012} Segers, J. (2012). Asymptotics of empirical copula processes
under nonrestrictive smoothness assumptions, \textit{Bernoulli}, \textbf{18}, 
764--782.
\bibitem{Sklar59} Sklar, M. (1959). Fonctions de r\'epartition \'a n dimensions
et leurs marges, \textit{Publ.\ Inst.\ Statist.\ Univ.\ Paris}, \textbf{8}, 
229--231.
\bibitem{Tsuka05} Tsukahara, H. (2005). Semiparametric estimation in copula models,
\textit{Canadian Journal of Statistics}, \textbf{33}, 357--375 [Erratum: 
\textit{Canadian Journal of Statistics}, \textbf{39}, 734--735 (2011)]. 
\bibitem{Vaart-Wellner} Van der Vaart, A.~W. and Wellner, J.~A. (1996). 
\textit{Weak Convergence and Empirical Processes: With Applications to Statistics}, 
Springer-Verlag, New York.
\end{thebibliography}
\end{document}